\newtheorem{thm}{Theorem}[section]
\newtheorem{prop}{Proposition}[section]
\newtheorem{coro}{Corollary}[section]
\newtheorem{lemma}{Lemma}[section]
\newtheorem{rem}{Remark}[section]
\newtheorem{hypo}{Hypothesis}[section]
\newtheorem*{conjecture}{Conjecture}
\newcommand{\R}{\mathbb{R}}             % REAL
\newcommand{\N}{\mathbb{N}}             % INTEGER
\newcommand{\C}{\mathbb{C}}             % COMPLEX
\newcommand{\m}{\mu_{m}}
\newcommand{\n}{\nu_{n}}
\newcommand{\half}{\frac{1}{2}}
\newcommand{\tr}{\frac{3}{4}}
\newcommand{\suchthat}{\, \mid \,}
\newcommand{\ds}{\displaystyle}
\newcommand{\Section}[1]{\section{#1} \setcounter{equation}{0}}
\begin{document}

\title{The anisotropic Calder\'{o}n problem for singular metrics of warped product type: the borderline between uniqueness and invisibility}
\author{Thierry Daud\'e \footnote{Research supported by the ANR Iproblems, No. ANR-13-JS01-0006 and the ANR JCJC Horizons, ANR-16-CE40-0012-01}, Niky Kamran \footnote{Research supported by NSERC grant RGPIN 105490-2011} and Francois Nicoleau \footnote{Research supported by the French GDR Dynqua}}

%%% TITLE %%%

%%% DATE %%%

%\date{\today}

%%% TITLE %%%

\maketitle

%%% ABSTRACT %%%

\begin{abstract}
In this paper, we investigate the anisotropic Calder\'{o}n problem on cylindrical Riemannian manifolds with boundary having two ends and equipped with singular metrics of (simple or double) warped product type, that is whose warping factors only depend on the horizontal direction of the cylinder. By singular, we mean that these factors are only assumed to be positive almost everywhere and to belong to some $L^p$ space with $0 < p \leq \infty$. Using recent developments on the Weyl-Titchmarsh theory for singular Sturm-Liouville operators, we prove that the local Dirichlet-to-Neumann maps at each end are well defined and determine the metric uniquely if: \\
\textbf{1.} (Doubly warped product case) the coefficients of the metric are $L^\infty$ and bounded from below by a positive constant. \\
\textbf{2.} (Warped product case) the coefficients of the metrics belong to a critical $L^p$ space where $p < \infty$ depends on the dimension of the compact fibers of the cylinder.

Finally, we show (in the warped product case and for zero frequency) that these uniqueness results are sharp by giving simple counterexamples for a class of singular metrics whose coefficients do not belong to the critical $L^p$ space. All these counterexamples lead to a region of space that is invisible to boundary measurements.
\end{abstract}

%%% KEYWORDS %%%

\vspace{1cm}

\noindent \textit{Keywords}. Anisotropic Calderon problem, singular Sturm-Liouville problems, Weyl-Titchmarsh function. \\

%%% SUBJECTS CLASSIFICATION %%%%

\noindent \textit{2010 Mathematics Subject Classification}. Primaries 81U40, 35P25; Secondary 58J50.

\tableofcontents

%%%%%%%%%%%%%%%%%%%%%%%%%%%%%%%%%%%%%%%%%%%%% INTRODUCTION %%%%%%%%%%%%%%%%%%%%%%%%%%%%%%%%%%%%%%%%%%%%%%%%%%%%%%%%%%%%%%%%%%%%%

\Section{Introduction and model}

This paper is a continuation of our previous work \cite{DKN2} and is devoted to the study of anisotropic Calder\'{o}n  problems on cylindrical Riemannian manifolds $(M,g)$, for a class of metrics $g$ with singular coefficients.  We recall that the anisotropic Calder\'on inverse problem consists in determining the properties of a medium, for instance the electrical conductivity of a body, by making electrical or voltage  measurements at its boundary. As already noted in  \cite{LeU}, the Calder\'{o}n problem has a natural geometric reformulation in terms of the Dirichlet-to-Neumann map (DN map) on a Riemannian manifold. We also refer to the surveys \cite{GT2, KS2, Sa, U1} for the current state of the art on the anisotropic Calder\'on problem and  to \cite{DSFKSU, DSFKLS, GSB, GT1, KS1, LaTU, LaU, LeU} for important contributions. In order to state our results, let us recall some standard definitions:

\vspace{0.2cm}

Let $(M, g)$ be an $n$-dimensional smooth compact connected Riemannian manifold with smooth boundary $\partial M$, and let $-\Delta_{g}$ be the positive Laplace-Beltrami operator on $(M,g)$, given in local coordinates $(x^i)_{i = 1,\dots,n}$ by
$$
  -\Delta_g = -\frac{1}{\sqrt{|g|}} \ \  \partial_i \left( \sqrt{|g|} g^{ij} \partial_j \right),
$$
where  $|g| = \det \left(g_{ij}\right)$ is the determinant of the metric tensor $(g_{ij})$ and where $\left(g^{ij}\right)$ is the inverse of $(g_{ij})$. (We use the Einstein summation convention throughout this paper.)

\vspace{0.2cm}
The Laplace-Beltrami operator $-\Delta_g$ with Dirichlet boundary conditions on $\partial M$ is self-adjoint on $L^2(M, dVol_g)$ and has pure point spectrum $\{ \lambda_j\}_{j \geq 1}$ with $0 < \lambda_1 < \lambda_2 \leq \dots \leq \lambda_j \to +\infty$, (see for instance \cite{KKL}). For $\lambda \not= \lambda_j$, we consider the Dirichlet problem
\begin{equation} \label{Eq000}
  \left\{ \begin{array}{cc} -\Delta_g u = \lambda u, & \textrm{on} \ M, \\ u = \psi, & \textrm{on} \ \partial M. \end{array} \right.
\end{equation}

 \vspace{0.2cm}
 It is well known \cite{Sa} that, for any $\psi \in H^{1/2}(\partial M)$, there exists a unique weak solution $u \in H^1(M)$ of the Dirichlet problem (\ref{Eq000}), \textit{i.e.} $u$ satisfies for all $v \in C_0^\infty(M)$,
\begin{equation} \label{Weak-DirichletPb}
  \int_M [ g^{ij} \partial_i u \partial_j v - \lambda u v ]\,dVol_g = 0,
\end{equation}
where $dVol_g = \sqrt{|g|} dx$ is the Riemannian volume element. So, we can define the Dirichlet-to-Neumann  map as the operator $\Lambda_{g}(\lambda)$ from $H^{1/2}(\partial M)$ to $H^{-1/2}(\partial M)$ given by
\begin{equation} \label{DN-Abstract}
  \Lambda_{g}(\lambda) (\psi) = \left( \partial_\nu u \right)_{|\partial M}.
\end{equation}
Here, $u$ is the unique solution of (\ref{Eq000}) and $\left( \partial_\nu u \right)_{|\partial M}$ is its normal derivative with respect to the unit outer normal $\nu$ on $\partial M$. Note that this normal derivative  has to be understood  in the weak sense as an element of $H^{-1/2}(\partial M)$ via the bilinear form
$$
  \left\langle \Lambda_{g}(\lambda) \psi | \phi \right \rangle = \int_M  \left( \langle du, dv \rangle_g - \lambda uv \right)\  dVol_g,
$$
where $\psi, \phi  \in H^{1/2}(\partial M)$,  $u$ is the unique solution of the Dirichlet problem (\ref{Eq000}), and where $v$ is any element of $H^1(M)$ such that $v_{|\partial M} = \phi$. Of course, when  $\psi$ is sufficiently smooth, this definition coincides with the usual one in local coordinates, that is
\begin{equation} \label{DN-Coord}
\partial_\nu u = \nu^i \partial_i u.
\end{equation}

We can also refine the definition of the DN map and introduce the \emph{partial} DN map on open sets  $\Gamma_D$ and $\Gamma_N$ of the boundary $\partial M$: the Dirichlet data are prescribed on $\Gamma_D$ and the Neumann data are measured on $\Gamma_N$. In other words, we consider the Dirichlet problem
\begin{equation} \label{Eq0}
  \left\{ \begin{array}{cc} -\Delta_g u = \lambda u, & \textrm{on} \ M, \\ u = \psi, & \textrm{on} \ \partial M, \end{array} \right.
\end{equation}
where $\psi \in H^{1/2}(\partial M)$ with $\textrm{supp}\,\psi \subset \Gamma_D$. The partial DN map is then defined as:
\begin{equation} \label{Partial-DNmap}
  \Lambda_{g,\Gamma_D,\Gamma_N}(\lambda) (\psi) = \left( \partial_\nu u \right)_{|\Gamma_N}.
\end{equation}

\vspace{0.5cm}
The anisotropic partial Calder\'on problem can now be stated as follows : \emph{does the knowledge of the partial DN map $\Lambda_{g,\Gamma_D, \Gamma_N}(\lambda)$ at a fixed frequency $\lambda$ determine the metric $g$?}

\vspace{0.2cm}
It is well known that as a consequence of a number of gauge invariances, the answer to this question is negative. Indeed, first of all, the partial DN map  $\Lambda_{g, \Gamma_D, \Gamma_N}(\lambda)$ is invariant when the metric $g$ is pulled back by any diffeomorphism of $M$ which is equal  to the identity on $\Gamma_D \cup \Gamma_N$, \textit{i.e.}
\begin{equation} \label{Inv-Diff}
  \forall \phi \in \textrm{Diff}(M) \ \textrm{such that} \ \phi_{|\Gamma_D \cup \Gamma_N} = Id, \quad \Lambda_{\phi^*g, \Gamma_D, \Gamma_N}(\lambda) = \Lambda_{g, \Gamma_D, \Gamma_N}(\lambda).
\end{equation}

Secondly, in dimension 2 and for $\lambda=0$, the conformal invariance of the Laplace-Beltrami operator leads to another gauge invariance of the partial DN map: if $dim \ M =2$, for any smooth function $c>0$, one has:
$$
\Delta_{cg} = \frac{1}{c} \Delta_g.
$$
Then, it follows easily that:
\begin{equation} \label{Inv-Conf}
\forall c \in C^\infty(M) \ \textrm{such that} \ c >0 \ \textrm{and} \ c_{|\Gamma_N} = 1, \quad \Lambda_{c g, \Gamma_D, \Gamma_N}(0) =
\Lambda_{g, \Gamma_D, \Gamma_N}(0).
\end{equation}

\vspace{0.3cm}
As a consequence, due to these gauge invariances, the \emph{anisotropic Calder\'on conjecture} is reformulated as follows :  \\

\begin{conjecture}
Let $M$ be a smooth compact connected manifold with smooth boundary $\partial M$ and let $g,\, \tilde{g}$ denote smooth Riemannian metrics on $M$. Let $\Gamma_D, \Gamma_N$ be open subsets of $\partial M$. Assume that $\lambda \in \R$ does not belong to $\sigma(-\Delta_g) \cup \sigma(-\Delta_{\tilde{g}})$ for the Dirichlet realizations of $\sigma(-\Delta_g)$ and $\sigma(-\Delta_{\tilde{g}})$, and suppose that
$$
  \Lambda_{g,\Gamma_D, \Gamma_N}(\lambda) = \Lambda_{\tilde{g},\Gamma_D, \Gamma_N}(\lambda).
$$
Does it follow that
$$
  g = \tilde{g},
$$
up to the gauge invariance (\ref{Inv-Diff}) if $\dim M \geq 3$ and up to the gauge invariances (\ref{Inv-Diff}) - (\ref{Inv-Conf}) if $\dim M = 2$ and $\lambda = 0$? \end{conjecture}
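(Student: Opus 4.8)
The statement above is the anisotropic Calder\'on \emph{conjecture}, which is open for general smooth metrics when $\dim M\geq 3$; so what follows is the program I would run, made explicit at the point where it succeeds and at the point where it stalls. The plan is the standard two-stage scheme. \textbf{Stage 1 (boundary determination).} Working in boundary normal coordinates near $\Gamma_N$, I would read off from the full symbol of $\Lambda_{g,\Gamma_D,\Gamma_N}(\lambda)$ first its principal part $|\xi'|_g$, which recovers $g$ restricted to $T\partial M$, and then term by term the whole Taylor jet of $g$ at each point of $\Gamma_N$ (the argument of Lee--Uhlmann and Lassas--Uhlmann). Composing $\tilde g$ with a diffeomorphism of $M$ equal to the identity on $\Gamma_D\cup\Gamma_N$ --- the gauge (\ref{Inv-Diff}) --- I may then assume $g=\tilde g$ to infinite order along $\Gamma_N$. \textbf{Stage 2 (interior determination).} Using the pseudolinearisation identity, the equality $\Lambda_{g,\Gamma_D,\Gamma_N}(\lambda)=\Lambda_{\tilde g,\Gamma_D,\Gamma_N}(\lambda)$ becomes an integral identity $\int_M Q(g,\tilde g)\, u\,\tilde u\, dVol_g = 0$ for every $H^1$ solution $u$ of $(-\Delta_g-\lambda)u=0$ and $\tilde u$ of $(-\Delta_{\tilde g}-\lambda)\tilde u=0$ with Dirichlet data supported in $\Gamma_D$. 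The conjecture is then equivalent to a Runge-type density statement: these products $u\tilde u$ must span enough of $L^2(M)$ to force $Q(g,\tilde g)=0$, hence $g=\tilde g$ up to the remaining gauge.

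For the density step there are three known mechanisms, each conditional. If $\dim M=2$ and $\lambda=0$, I would pass to isothermal coordinates to reach the planar isotropic conductivity problem, invoke the full-data theorem of Astala--P\"aiv\"arinta and the partial-data theorem of Imanuvilov--Uhlmann--Yamamoto (through Nachman's $\bar\partial$-method for the Schr\"odinger form), and finally quotient out the undetermined conformal factor, which is exactly the extra gauge (\ref{Inv-Conf}). If $g$ is real-analytic (or Einstein), I would extract boundary spectral / boundary-distance data from $\Lambda_g$ and reconstruct $(M,g)$ as a metric space by the boundary-control method together with Tataru's unique continuation theorem, analyticity propagating the $\Gamma_N$-jet to all of $M$ and pinning down the diffeomorphism. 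If $\dim M\geq 3$ and $(M,g)$ is conformally transversally anisotropic, I would embed $(M,g)$ conformally into a cylinder $\R\times M_0$, build complex geometric optics or Gaussian-beam solutions whose amplitude concentrates along a transversal geodesic, plug them into the integral identity to obtain an attenuated geodesic ray transform of $Q(g,\tilde g)$ on $M_0$, and invert that transform when the transversal manifold is simple or foliated by strictly convex hypersurfaces.

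The hard part is the general case: $\dim M\geq 3$, no analyticity, no product or warped structure. Here I know of no construction of sufficiently many oscillating (exponentially growing) solutions of $(-\Delta_g-\lambda)u=0$ --- all available ones require a limiting Carleman weight, equivalently the conformally-transversally-anisotropic hypothesis singling out a distinguished direction with harmonic phase --- and even when such solutions exist the argument only closes if the relevant geodesic ray transform on the transversal manifold is injective, which is false or unknown once conjugate points or caustics appear. A genuine proof of the conjecture as stated would thus demand either a new calculus for $\Delta_g-\lambda$ on an arbitrary manifold, valid without a limiting Carleman weight, or an entirely different route to the density statement. The partial-data hypothesis ($\Gamma_D,\Gamma_N$ possibly small and disjoint) compounds the difficulty: it forces Carleman estimates with correctly signed boundary terms (Kenig--Sj\"ostrand--Uhlmann, Kenig--Salo), and for $\lambda\neq 0$ the resonance-free window is supplied only by the hypothesis $\lambda\notin\sigma(-\Delta_g)\cup\sigma(-\Delta_{\tilde g})$, which must be used to keep the Dirichlet problems solvable. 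This is exactly why the present paper does not attack the conjecture directly but restricts to (possibly singular) warped-product metrics: separation of variables collapses the integral identity to a one-dimensional Weyl--Titchmarsh comparison, the density step becomes elementary, and the whole scheme goes through --- while the one-dimensional picture simultaneously locates the critical $L^p$ exponent below which the density fails and invisible regions appear.
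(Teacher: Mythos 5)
You are right not to offer a ``proof'': the statement is the anisotropic Calder\'on \emph{conjecture}, which the paper poses as an open question and never proves; it only establishes special cases (Theorems \ref{MainThm} and \ref{MainThm2}) for singular warped-product cylinders via separation of variables, the Weyl--Titchmarsh/CAM machinery and the uniqueness theorem of \cite{EGNT2}. Your survey of the two-stage scheme (boundary determination plus a density/ray-transform step), of the settings where it closes (dimension two at zero frequency, real-analytic or Einstein metrics, CTA geometries with invertible transversal ray transform), and of where it stalls (no limiting Carleman weight in general, partial data with small or disjoint sets) is accurate and matches the paper's own introduction, including your closing observation that the present paper deliberately sidesteps the conjecture by exploiting the warped-product structure.

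One point you should state more sharply: for data on \emph{disjoint} sets the question as literally posed is already known to have a negative answer. The introduction recalls (citing \cite{DKN2, DKN3, DKN4}) that when $\Gamma_D \cap \Gamma_N = \emptyset$ and $\overline{\Gamma_D \cup \Gamma_N} \ne \partial M$, there are infinitely many conformal metrics $\tilde g = c^4 g$ with the same partial DN map, the conformal factors solving the Yamabe-type equation (\ref{Yamabe}); this is an additional gauge invariance beyond (\ref{Inv-Diff})--(\ref{Inv-Conf}), and even modulo it there are counterexamples in warped-product cylinders. So your phrase ``open for general smooth metrics'' is correct for full or local data ($\Gamma_D = \Gamma_N$), but for disjoint data the conjecture in the stated form fails, and any honest program must first reformulate it to account for the gauge (\ref{Yamabe}). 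Apart from this omission, your assessment is sound.
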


\vspace{0.5cm}
Before stating our results, let us give a brief survey of some of the most important known contributions to the Calder\'{o}n conjecture. The main results for this conjecture apply when the frequency $\lambda=0$ and for full data $(\Gamma_D=\Gamma_N= \partial M)$, or for local data $(\Gamma_D=\Gamma_N = \Gamma$, where $\Gamma$ is any open set of $\partial M$). In dimension $2$, for compact and connected surfaces and for  $\lambda=0$, the anisotropic Calder\'on conjecture has been proved for full or local data, (see \cite{LaU, LeU}). In dimension greater than or equal to $3$, for real-analytic Riemannian manifolds  or for compact connected Einstein manifolds with boundary, it has likewise been shown that the local DN map determines uniquely the metric up to the natural gauge invariances, (see \cite{LeU, LaU, LaTU, GSB}.

\vspace{0.2cm}
In the general case of smooth metrics without any analyticity's assumptions, the anisotropic Calder\'on conjecture is still a major open problem, either for full or local data. Nevertheless, for conformally transversally anisotropic manifolds and for metrics belonging to the same conformal class, some important results have been obtained recently \cite{DSFKSU, DSFKLS, DSFKLLS, KS1}.

\vspace{0.2cm}
In the case  of partial data measured on \emph{disjoint sets}, the anisotropic conjecture has been answered negatively in \cite{DKN2, DKN3, DKN4}. More precisely, given a smooth compact connected Riemannian manifold with boundary $(M,g)$, of dimension $n\geq 3$, there exist in the conformal class of $g$ an infinite number of Riemannian metrics $\tilde{g}= c^4 g$ having the same partial DN map $\Lambda_{g,\Gamma_D, \Gamma_N}(\lambda)$, when  $\Gamma_D   \cap  \Gamma_N = \emptyset$ and satisfying the condition $\overline{\Gamma_D \cup \Gamma_N} \ne \partial M$. The conformal factors $c^4$ that lead to these non-uniqueness results satisfy a nonlinear elliptic PDE of Yamabe type on $(M,g)$:
\begin{equation}\label{Yamabe}
  \left\{ \begin{array}{cc} \Delta_{g} c^{n-2} + \lambda ( c^{n-2} - c^{n+2}) = 0, & \textrm{on} \ M, \\
	c = 1, & \textrm{on} \ \Gamma_D \cup \Gamma_N. \end{array} \right.
\end{equation}

In other words, when $\Gamma_D$ and $\Gamma_N$ are disjoint, one can exhibit a natural gauge invariance governed by the equation (\ref{Yamabe}). Nevertheless, for a class of cylindrical Riemannian manifolds with boundary having two ends, and equipped with a suitable warped product metric, one can also construct counterexamples to uniqueness modulo this natural gauge invariance, \cite{DKN3,DKN4}.

\vspace{0.2cm}
To conclude, let us mention several papers dealing with the Calder\'on problem for \emph{singular} metrics or conductivities, the topic which is the object of this paper. In dimension $2$, the original Calder\'on problem for conductivities was solved by Astala and P\"aiv\"arinta in \cite{AP}. The authors showed that a measurable isotropic conductivity bounded uniformly from below and above is uniquely determined by the global DN map. These 2D results were later generalized in \cite{ALP2} leading to a precise borderline between uniqueness and invisibility results in the Calder\'on problem. In dimensions higher than $3$, Habermas and Tataru \cite{HaTa} showed uniqueness in the global Calder\'on problem for uniformly elliptic isotropic conductivities that are Lipschitz and close to the identity. The latter condition was relaxed by Caro and Rogers in \cite{CaRo}. In dimensions $3$ and $4$, these results were slightly improved by Habermas in \cite{Ha} to the case of conductivities that belong to $W^{1,n}$. In \cite{KrUh2}, Krupchyk and Uhlmann proved that the magnetic and electric $L^\infty$ potentials of a magnetic Schr\"odinger operator on a conformally transversally manifold are uniquely determined by the global DN map. Even more recently, Santacesaria \cite{San} announced a new strategy (an higher dimensional analog to the one used in \cite{AP}) to adress the question whether a uniformly elliptic isotropic $L^\infty$ conductivity could be uniquely determined by the global DN map in dimensions higher than $3$. Related to the partial Calder\'on problem, Krupchyk and Uhlmann in \cite{KrUh1} proved that an isotropic conductivity with - roughly speaking - $\frac{3}{2}$ derivatives in the $L^2$ sense is uniquely determined by a DN map measured on possibly very small subset of the boundary.

%%%%%% OUR RESULTS

\vspace{0.5cm}
In this paper, we are interested in studying the anisotropic Calder\'on problem for (doubly) warped product metrics of the type already encountered in \cite{DKN3, DKN4} but with less regularity. Precisely, we consider a cylindrical Riemannian manifold $(M,g)$ of the form
\begin{equation} \label{Cylinder0}
  M = [0,1] \times K_1 \times K_2,
\end{equation}	
where $(K_j, g_j)$ are closed $n_j$-dimensional smooth Riemannian manifolds. We consider only manifolds of dimension higher than $3$ by assuming that $n_1 \geq 1, \ n_2 \geq 0, \ n_1 + n_2 \geq 2$. We suppose that $M$ is equipped with Riemannian metrics of the following form:
\begin{equation} \label{Metric0}
  g = h_1(x) dx^2 + h_1(x) g_1 + h_2(x) g_2,
\end{equation}
where $h_1, \ h_2$ are \emph{measurable positive functions depending only on the first variable $x \in [0,1]$}. Note that for $n_2 \geq 1$ and $h_1 \ne h_2$, this class of doubly warped product metrics does not enter the framework of conformally transversaly anisotropic metrics studied in \cite{DSFKSU, DSFKLS, DSFKLLS}. Conversely, note that for $n_2 = 0$ or $n_2 \geq 1$ and $h_1 = h_2$, we recover the usual warped product metrics. Another important point to mention is that the boundary $\partial M$ of the manifold $M$ is not connected and consists in the disjoint union of two copies of $K_1 \times K_2$ that we will call {\it{ends}}:
\begin{equation}\label{bord}
  \partial M = \Gamma_0 \cup \Gamma_1, \quad \Gamma_0 = \{0\} \times K_1 \times K_2, \quad \Gamma_1 = \{1\} \times K_1 \times K_2.
\end{equation}

In Section \ref{3}, we first work with a metric that is of uniformly elliptic signature and whose coefficients belong to $L^\infty$, \textit{i.e.} we assume that there exist two positive constants $0 < c < C $ such that $c \leq h_j(x) \leq C$ almost everywhere in $[0,1]$. Clearly, the Laplace-Beltrami operator $-\Delta_g$ is then uniformly elliptic on $M$ and it is well known that the Dirichlet problem (\ref{Eq000}) has a unique solution $u \in H^1(M)$ for any boundary data $\psi \in H^\half(\partial M)$ (see \cite{GT}). If $\lambda$ is a frequency not belonging to the discrete Dirichlet spectrum of $-\Delta_g$, we  define as previously the local DN maps at each end $\Lambda_{g, \Gamma_0, \Gamma_0}(\lambda)$ and $\Lambda_{g, \Gamma_1, \Gamma_1}(\lambda)$. In other words, we study the Calder\'on problem when the Dirichlet data are prescribed on the end $\Gamma_0$ or $\Gamma_1$ and the Neumann data are measured on the same end.

Our main result is the following:

\begin{thm} \label{MainThm}
Let $(M,g)$ and $(M,\tilde{g})$ denote two Riemannian manifolds of the form (\ref{Cylinder0})-(\ref{Metric0}). Assume that:
$$
 \Lambda_{g, \Gamma_j,\Gamma_j}(\lambda) = \Lambda_{\tilde{g}, \Gamma_j,\Gamma_j}(\lambda) \ \ ,\ \ {\rm{for}} \ j=0 \ {\rm{or}} \ j= 1.
$$
Then, $$
g= \tilde{g}.
$$
\end{thm}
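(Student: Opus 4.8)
The plan is to separate variables so as to reduce the equality of partial DN maps to an equality of Weyl--Titchmarsh functions evaluated at the transverse Laplace eigenvalues, then to upgrade this to an identity of meromorphic functions by a complex-analytic argument, and finally to read off $h_1$ and $h_2$ from the reconstructed Sturm--Liouville data.

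Take $j=0$; the case $j=1$ is symmetric. Fix $L^2$-orthonormal bases $(Y_m)_{m\ge0}$ of $L^2(K_1,dV_{g_1})$ and $(Z_n)_{n\ge0}$ of $L^2(K_2,dV_{g_2})$ consisting of eigenfunctions of $-\Delta_{g_1}$ and $-\Delta_{g_2}$, with eigenvalues $0=\mu_0\le\mu_1\le\cdots\to+\infty$ and $0=\nu_0\le\nu_1\le\cdots\to+\infty$. A direct computation of $-\Delta_g$ for the metric (\ref{Metric0}) shows that the variables separate: setting $f:=h_1^{(n_1-1)/2}h_2^{n_2/2}\in L^\infty(0,1)$ (bounded below), the solution of (\ref{Eq0}) with data $\psi=Y_m\otimes Z_n$ on $\Gamma_0$ and $0$ on $\Gamma_1$ equals $u=v_{m,n}(x)\,Y_m(y)\,Z_n(z)$, where $v_{m,n}$ solves on $[0,1]$ the Sturm--Liouville equation
\begin{equation*}
 -(fv')' + f\left(\mu_m-\lambda h_1+\frac{h_1}{h_2}\,\nu_n\right)v=0,\qquad v_{m,n}(0)=1,\quad v_{m,n}(1)=0 .
\end{equation*}
Since $f$ and $1/f$ lie in $L^1(0,1)$ and both endpoints are regular, this is covered by the Weyl--Titchmarsh theory for (non-smooth) Sturm--Liouville operators mentioned in the introduction, applied to $v\mapsto\frac1f\big(-(fv')'+f(\frac{h_1}{h_2}\nu-\lambda h_1)v\big)$ on $[0,1]$ with the Dirichlet condition at $x=1$ (spectral parameter $-\mu$), and symmetrically to the operator with weight $fh_1/h_2$ (spectral parameter $-\nu$).

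The outer unit normal at $\Gamma_0$ being $-h_1(0)^{-1/2}\partial_x$, the operator $\Lambda_{g,\Gamma_0,\Gamma_0}(\lambda)$ is diagonalized by $(Y_m\otimes Z_n)$, with eigenvalue $N_{m,n}=-h_1(0)^{-1/2}v_{m,n}'(0)$; up to the explicit factor $h_1(0)^{-1/2}$ this is the value at $(\mu_m,\nu_n)$ of the Weyl--Titchmarsh function $M_g$ at $x=0$ of the above Sturm--Liouville family. Hence the hypothesis $\Lambda_{g,\Gamma_0,\Gamma_0}(\lambda)=\Lambda_{\tilde g,\Gamma_0,\Gamma_0}(\lambda)$ is equivalent to $N_{m,n}=\widetilde N_{m,n}$ for all $m,n$. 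Reading off the $m,n\to\infty$ asymptotics of $N_{m,n}$ (classical Weyl--Titchmarsh / local Borg--Marchenko asymptotics) recovers $h_1(0)$ and $h_2(0)$, so $h_1(0)=\tilde h_1(0)$, $h_2(0)=\tilde h_2(0)$, and therefore $M_g(\mu_m,\nu_n)=M_{\tilde g}(\mu_m,\nu_n)$ for all $m,n$.

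The crucial step is to upgrade these countably many equalities to the identity $M_g\equiv M_{\tilde g}$. For $\nu$ fixed, $z\mapsto M_g(z,\nu)$ is a ratio of solutions of the Sturm--Liouville equation evaluated at $x=1$, hence meromorphic of order $\le\frac12$ and of Nevanlinna type, and likewise in $\nu$. By Weyl's law the counting function of $(\mu_m)$ grows like $r^{n_1/2}$ and that of $(\nu_n)$ like $r^{n_2/2}$; since $n_1+n_2\ge2$, this forces $\{(\mu_m,\nu_n)\}$ to be a uniqueness set for the relevant class of functions: when $n_1\ge2$ one already concludes by freezing $\nu=\nu_0=0$, the zero set of any order-$\frac12$ function being too sparse to contain $\{\mu_m\}$; when $n_1=1$ (so $n_2\ge1$) one argues symmetrically in $\nu$ or uses the full two-dimensional grid. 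This complex-analytic interpolation/uniqueness argument, combined with the singular Weyl--Titchmarsh theory, is where the real work lies and is the main obstacle; one concludes $M_g\equiv M_{\tilde g}$. Then the Borg--Marchenko uniqueness theorem for (singular) Sturm--Liouville operators, applied to $M_g(\cdot,0)=M_{\tilde g}(\cdot,0)$, forces the associated operators to coincide: their (impedance) weight gives $f=\tilde f$, and, when $n_2\ge1$, isolating the $\nu$-dependence (equivalently, applying Borg--Marchenko to the $\nu$-family) gives $fh_1/h_2=\tilde f\,\tilde h_1/\tilde h_2$, hence $h_1/h_2=\tilde h_1/\tilde h_2$. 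If $n_2=0$ then $n_1\ge2$, so $f=h_1^{(n_1-1)/2}$ with positive exponent already yields $h_1=\tilde h_1$; if $n_2\ge1$ then $f=\tilde f$ and $h_1/h_2=\tilde h_1/\tilde h_2$ give $h_2^{(n_1+n_2-1)/2}=f\,(h_1/h_2)^{-(n_1-1)/2}$, and since $n_1+n_2-1\ge1$ this determines $h_2$, hence $h_1=(h_1/h_2)h_2$. In all cases $h_1=\tilde h_1$ and $h_2=\tilde h_2$, that is $g=\tilde g$.
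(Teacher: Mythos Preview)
Your overall architecture is the same as the paper's: separate variables, identify the eigenvalues of $\Lambda_{g,\Gamma_0,\Gamma_0}(\lambda)$ with Weyl--Titchmarsh values, analytically continue in the spectral parameter, then invoke an inverse spectral theorem. But the final step contains a genuine gap.

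\textbf{The main gap.} You assert that ``the Borg--Marchenko uniqueness theorem \dots forces the associated operators to coincide: their (impedance) weight gives $f=\tilde f$.'' In the impedance form $-(f u')'+f\,q\,u=zf u$ with merely $L^\infty$ coefficients this is false: the Weyl--Titchmarsh function (equivalently the spectral measure) determines the coefficients only up to a Liouville transformation. Concretely, since here $p=r=\sqrt{h}$ and $\tilde p=\tilde r=\sqrt{\tilde h}$, the correct conclusion (this is the paper's Corollary~\ref{PR}, from \cite{EGNT2}) is that there exists an absolutely continuous $\kappa>0$ with $\kappa(0)=1$, $(\sqrt{h}\kappa')(0)=0$, such that
\[
\sqrt{\tilde h}=\kappa^2\sqrt{h},\qquad \tilde q_{\lambda,n}=\kappa^2 q_{\lambda,n}-\kappa(\sqrt{h}\kappa')'.
\]
For $\lambda\ne 0$ (or $\nu_n\ne 0$) the potential $q_{\lambda,n}=(\nu_n h_1/h_2-\lambda h_1)\sqrt{h}$ is nonzero, and nothing forces $\kappa\equiv 1$ at this stage. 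The paper now argues as follows: applying the above for two distinct eigenvalues $\nu_n$ (the relation $\tilde h=\kappa^4 h$ shows $\kappa$ is independent of $n$) and subtracting yields $\tilde h_1/\tilde h_2=h_1/h_2$, hence $\tilde g=\kappa^{4/(n_1+n_2-1)}g$; the residual equation becomes the nonlinear Carath\'eodory ODE
\[
-(\sqrt{h}\kappa')'+\lambda\bigl(\kappa^{\frac{4}{n_1+n_2-1}+1}-\kappa\bigr)\sqrt{h}\,h_1=0,\qquad \kappa(0)=1,\ (\sqrt{h}\kappa')(0)=0,
\]
whose unique solution is $\kappa\equiv 1$. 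This Yamabe-type step is essential and is missing from your argument.

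\textbf{Two secondary issues.} First, you normalise the DN eigenvalue by $h_1(0)^{-1/2}$ and then ``recover $h_1(0),h_2(0)$ from asymptotics''. For $h_j\in L^\infty$ the pointwise values $h_j(0)$ are meaningless; the paper avoids this by using the weak definition (\ref{DNW}), which yields directly $L^{mn}(\lambda)=-M(-\mu_m,\nu_n)$ with no boundary factor (see (\ref{DNW2})--(\ref{DNpartial})). Second, your analytic continuation is only sketched; the delicate case is $n_1=1$, where $\sqrt{\mu_m}\sim cm$ has exactly the critical density for an exponential-type function. The paper handles this precisely: after passing to $F(z)=D(-z^2,\nu_n)\tilde\Delta(-z^2,\nu_n)-\tilde D(-z^2,\nu_n)\Delta(-z^2,\nu_n)$, one shows via the Weyl law for $S$ (Corollary~\ref{2-EstRef}) that the indicator satisfies $h_F(\theta)\le 2|\cos\theta|$, and then applies the Duffin--Schaeffer theorem to a suitably rescaled subsequence of $(\sqrt{\mu_m})$ to conclude $F\equiv 0$.
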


\vspace{0.2cm}
\noindent We emphasize that the gauge invariance (\ref{Inv-Diff}) does not appear in the statement of the theorem since both metrics $g$ and $\tilde{g}$ have the particular form (\ref{Metric0}). Note that this class of metrics could be extended to metrics $g$ on $M$ having the form
\begin{equation} \label{MetricAlt0}
  g=h_0(x) dx^2 + h_1(x) g_1 + h_2(x) g_2,
\end{equation}
since any metric (\ref{MetricAlt0}) can always be written as (\ref{Metric0}) thanks to the change of variables $y = \int_0^x \sqrt{\frac{h_0(t)}{h_1(t)}} \ dt$ provided this change of variables is well defined. We give the corresponding uniqueness result in Theorem \ref{MainThm-Extended1}. Note also that Theorem \ref{MainThm} is an extension to our models of the well-known $2$ dimensional results obtained in \cite{AP} for measurable conductivities that are uniformly bounded from below and above.

\vspace{0.5cm}
Let us briefly outline the strategy of the proof of Thm \ref{MainThm}. On one hand, thanks to the cylindrical symmetry of $(M,g)$ and its doubly warped product structure, we may separate the radial variable, \textit{i.e.} look for the solutions $u$ of the Helmholtz equation $-\Delta_g u = \lambda u$ of the form:
$$
u = \sum_{m,n=0}^{+\infty} u_{mn}(x) \ \Phi_m \Psi_n,
$$
where
$(\Phi_m)$, (resp. $(\Psi_n)$) is a Hilbert basis of harmonics of the Laplace-Beltrami operator $-\Delta_{g_1}$, (resp. $-\Delta_{g_2}$), with associated eigenvalues $\m$, (resp. $\n)$. %Note also that, in order to simplify the notation in the following sections, we prefer to write $u_m(x)$ rather than $u_{mn}(x)$.

Up to some suitable boundary conditions, the functions $u_{mn}(x)$ satisfy on $[0,1]$ the singular Sturm-Liouville equation with respect to $x$:
\begin{equation} \label{SLintro}
   - \frac{1}{\sqrt{h}} \left( \sqrt{h}u_{mn}'\right)'  + (  \n \frac{h_1}{h_2}  - \lambda h_1 ) u_{mn} = -\m u_{mn},
\end{equation}
where we have set $h = h_1^{n_1 -1} h_2^{n_2}$, and where $-\m$ plays the role of a spectral parameter. It turns out that these equations fit into the framework of the recent work by Eckhardt-Gesztesy-Nichols-Teschl \cite{EGNT1,EGNT2} on inverse spectral theory for Sturm-Liouville operators with distributional potentials.

On the other hand, the local DN maps $\Lambda_{g, \Gamma_0,\Gamma_0}(\lambda)$ and $\Lambda_{g, \Gamma_1,\Gamma_1}(\lambda)$ can be also diagonalized onto the same Hilbert basis of harmonics $Y_{mn} = \Phi_m \Psi_n$ and turn out to be operators of multiplication by the singular Weyl-Titchmarsh functions associated to the Sturm-Liouville equations (\ref{SLintro}) with Dirichlet boundary conditions. Once this is established, Theorem \ref {MainThm} is obtained as a natural consequence of the uniqueness results for spectral measures obtained in \cite{EGNT2}, Theorem 3.4 and a slight extension of the complex angular momentum (CAM) method already used in \cite{DKN1, DN3, DN4, DGN}.

In Section \ref{4}, we consider the particular case of cylindrical manifolds
\begin{equation} \label{Cylinder01}
  M = [0,1] \times K,
\end{equation}	
equipped with Riemannian warped product metrics :
\begin{equation} \label{Metric01}
  g = h_1(x) [dx^2 + g_K],
\end{equation}
where $(K, g_K)$ is a closed $n$-dimensional smooth Riemannian manifold and $h_1$ is a measurable function depending only on the first variable $x \in [0,1]$ that satisfies
\begin{equation} \label{MainCond}
  h_1(x) > 0 \ \textrm{a.e.}, \quad h_1^{\frac{n-1}{2}} \in L^1(0,1), \quad \frac{1}{h_1^{\frac{n-1}{2}}} \in L^1(0,1).
\end{equation}
Note that under these above assumptions, the Laplace-Beltrami operator $-\triangle_g$ is not uniformly elliptic in general. We study the anisotropic Calder\'on problem at zero frequency for this class of singular warped product Riemannian manifolds. We first prove that given Dirichlet data $\psi \in H^2(\partial M)$, there exists a unique solution $u \in \dot{H}^1(M)$ (the homogeneous Sobolev space of order $1$) of the Dirichlet problem
$$
  \left\{ \begin{array}{cc} -\Delta_g u = 0, & \textrm{on} \ M, \\ u = \psi, & \textrm{on} \ \partial M. \end{array} \right.
$$
In fact, under the additional assumption
\begin{equation} \label{MainCondAdd}
  h_1^{\frac{n+1}{2}} \in L^1(0,1),
\end{equation}
we can prove that for Dirichlet data $\psi \in H^2(\partial M)$, there exists a unique solution $u \in H^1(M)$ (the usual Sobolev space of order $1$) of the Dirichlet problem. Using these results we can then define the DN map as a bounded linear operator from $H^2(M)$ into $L^2(M)$. 

Our second main uniqueness theorem is:

\begin{thm} \label{MainThm2}
Let $(M,g)$ and $(M,\tilde{g})$ denote two Riemannian manifolds of the form (\ref{Cylinder01})-(\ref{Metric01}). Assume that (\ref{MainCond}) holds and that
$$
 \Lambda_{g, \Gamma_j,\Gamma_j} = \Lambda_{\tilde{g}, \Gamma_j,\Gamma_j} \ \ ,\ \ {\rm{for}} \ j=0 \ {\rm{or}} \ j= 1.
$$
Then, $$
g= \tilde{g}.
$$
\end{thm}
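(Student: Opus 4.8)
The plan is to mimic, at zero frequency, the strategy used for Theorem~\ref{MainThm}, the only genuinely new feature being that the Sturm--Liouville operators produced by separation of variables are now singular (not uniformly elliptic), which is exactly what hypothesis (\ref{MainCond}) is designed to handle. First I would separate variables: expanding a solution $u$ of $-\Delta_g u=0$ on the Hilbert basis $(\Phi_m)_{m\ge 0}$ of eigenfunctions of $-\Delta_{g_K}$, with eigenvalues $0=\mu_0<\mu_1\le\mu_2\le\cdots$, one has $u=\sum_m u_m(x)\,\Phi_m$, and (using $\sqrt{|g|}=h_1^{(n+1)/2}\sqrt{|g_K|}$ and $g^{xx}=h_1^{-1}$) the Helmholtz equation becomes, mode by mode,
\begin{equation*}
  -\bigl(h_1^{\frac{n-1}{2}}\,u_m'\bigr)'=-\,\mu_m\,h_1^{\frac{n-1}{2}}\,u_m\qquad\text{on }(0,1),
\end{equation*}
with prescribed endpoint values $u_m(0),u_m(1)$ coming from $u_{|\Gamma_0}$, $u_{|\Gamma_1}$; for the local DN map at $\Gamma_0$ one has $u_{|\Gamma_1}=0$, i.e.\ $u_m(1)=0$. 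Setting $a:=h_1^{(n-1)/2}$, condition (\ref{MainCond}) says precisely $a\in L^1(0,1)$ and $1/a\in L^1(0,1)$, so the operators $f\mapsto\frac1a\bigl(-(af')'\bigr)$ are exactly of the type studied in \cite{EGNT1,EGNT2}, with both endpoints $0$ and $1$ \emph{regular} in the quasi-derivative sense: each $u_m$ has finite limits $u_m(0),u_m(1)$ and finite quasi-derivatives $u_m^{[1]}(0):=\lim_{x\to 0}(au_m')(x)$, $u_m^{[1]}(1)$. The well-posedness of the Dirichlet problem proved earlier in Section~\ref{4} justifies this decomposition and the termwise reading of boundary traces.

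Next I would identify the local DN map with a Weyl--Titchmarsh operator. Writing $\psi_{|\Gamma_0}=\sum_m\psi_{0,m}\Phi_m$ and, for an extension $v$ of a boundary function $\phi$ supported in $\Gamma_0$, $\phi_{|\Gamma_0}=\sum_m\phi_{0,m}\Phi_m$, insertion into the weak formulation at $\lambda=0$, integration by parts in $x$ and use of the mode equation give $\langle\Lambda_{g,\Gamma_0,\Gamma_0}\psi|\phi\rangle=-\sum_m u_m^{[1]}(0)\,\phi_{0,m}$ with $u_m(0)=\psi_{0,m}$, $u_m(1)=0$. Letting $\varphi(\cdot,z)$ be the solution of $-(af')'=z\,a\,f$ normalized by the Dirichlet data at $x=1$, we have $u_m=\psi_{0,m}\,\varphi(\cdot,-\mu_m)/\varphi(0,-\mu_m)$ (well defined since $-\mu_m\le 0$ is never a Dirichlet eigenvalue of the mode equation), so $\Lambda_{g,\Gamma_0,\Gamma_0}$ is diagonal in $(\Phi_m)$ and acts on the $m$-th mode as multiplication by $-M_0(-\mu_m)$, where $M_0(z):=\varphi^{[1]}(0,z)/\varphi(0,z)$ is the singular Weyl--Titchmarsh function at $x=0$ attached to the Dirichlet condition at $x=1$; symmetrically, $\Lambda_{g,\Gamma_1,\Gamma_1}$ is governed by the analogous function $M_1$. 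Since $(M,g)$ and $(M,\tilde g)$ carry the same cross-section $(K,g_K)$ — hence the same basis $(\Phi_m)$ and the same eigenvalues $(\mu_m)$ — the hypothesis $\Lambda_{g,\Gamma_j,\Gamma_j}=\Lambda_{\tilde g,\Gamma_j,\Gamma_j}$ forces $M_j(-\mu_m)=\tilde M_j(-\mu_m)$ for every $m\ge 0$.

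To upgrade this to $M_j\equiv\tilde M_j$ on $\C$ I would run the complex angular momentum argument. The Liouville change of variable $t=\frac1L\int_0^x a^{-1}$, $L=\int_0^1 a^{-1}$, turns the mode equation into $-w''=z\,\hat r\,w$ on $(0,1)$ with $\hat r\in L^1(0,1)$ and, by Cauchy--Schwarz together with a direct computation, $\int_0^1\sqrt{\hat r}=1$; consequently $z\mapsto\varphi(0,z)$ and $z\mapsto\varphi^{[1]}(0,z)$ are entire of order at most $1/2$ and finite type, and the Wronskian-type combination $F(z):=\varphi^{[1]}(0,z)\,\tilde\varphi(0,z)-\tilde\varphi^{[1]}(0,z)\,\varphi(0,z)$ is entire of order at most $1/2$ and finite type, vanishing at $z=-\mu_m$ for all $m$. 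Passing to the ``angular momentum'' variable $\nu$ with $z=-\nu^2$, $F(-\nu^2)$ becomes of exponential type and of bounded type in a half-plane, vanishing at $\nu_m\simeq\sqrt{\mu_m}$; since by Weyl's law $\sum_m\mu_m^{-1/2}=+\infty$, a Nevanlinna/Phragm\'en--Lindel\"of uniqueness theorem — the ``slight extension of the CAM method'' of \cite{DKN1,DN3,DN4,DGN} — forces $F\equiv 0$, i.e.\ $M_j\equiv\tilde M_j$. Finally, $M_0(0)=-1/L$ gives $L=\tilde L$, so $M_0\equiv\tilde M_0$ and $L=\tilde L$ yield equality of the Dirichlet m-functions at $t=0$ of $-w''=z\hat r w$ and $-w''=z\tilde{\hat r}w$ on $(0,1)$; the Borg--Marchenko-type inverse uniqueness theorem of \cite{EGNT2} (Theorem~3.4), with the common Dirichlet condition at $t=1$, gives $\hat r=\tilde{\hat r}$ a.e., hence (together with $L=\tilde L$) $a=\tilde a$ a.e., i.e.\ $h_1^{(n-1)/2}=\tilde h_1^{(n-1)/2}$ and therefore $h_1=\tilde h_1$ a.e. (here $n\ge 2$ is used), that is $g=\tilde g$. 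The argument starting from $\Gamma_1$ is identical.

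I expect the two delicate points to be: first, setting up Weyl--Titchmarsh theory rigorously for the \emph{non-uniformly elliptic} operators $\frac1a\bigl(-(af')'\bigr)$ with merely $a,1/a\in L^1$ — checking that the endpoints are regular in the quasi-derivative sense, that the boundary traces $u_m(0)$ and $u_m^{[1]}(0)$ behave as claimed, and that the a priori only weakly defined local DN map is genuinely multiplication by $-M_0(-\mu_m)$; and second, the CAM step, i.e.\ obtaining the sharp order/type estimates for the entire functions $\varphi(0,\cdot),\varphi^{[1]}(0,\cdot)$ and enough control on the growth of $M_0-\tilde M_0$ (bounded type in a half-plane) to apply a Nevanlinna-type uniqueness theorem against the density of the spectrum of $-\Delta_{g_K}$ — which for a general cross-section $K$ (rather than a sphere, as in the classical CAM setting) is precisely what the announced refinement must supply.
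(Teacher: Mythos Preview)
Your overall strategy---separate variables, identify the local DN map with a Weyl--Titchmarsh function, complexify the angular momentum, then invoke the inverse spectral result of \cite{EGNT2}---is exactly the paper's, and is correct. Two points are worth noting where your route diverges from, and is more circuitous than, the paper's.

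First, the Liouville change $t=\tfrac1L\int_0^x a^{-1}$ is unnecessary. The paper works directly with the Sturm--Liouville data $p=r=\sqrt{h}=a$, $q=0$ (zero frequency), which places you squarely in the setting of Corollary~\ref{PR}. That corollary already packages the output of \cite{EGNT2}, Theorem~3.4, together with the normalisations $\kappa(0)=1$, $(\sqrt{h}\kappa')(0)=0$, so one gets immediately $\tilde h=\kappa^4 h$ and $(\sqrt{h}\kappa')'=0$; the Carath\'eodory-type uniqueness then forces $\kappa\equiv 1$. By contrast, after your Liouville transform one has $p=\tilde p=1$ but $p\neq r$, so Corollary~\ref{PR} no longer applies verbatim: you must go back to Theorem~\ref{ISM}, derive $\kappa''=0$ and $\eta'\kappa^2=1$, and then reproduce by hand the argument of Corollary~\ref{PR} to obtain the initial conditions $\kappa(0)=1$, $\kappa'(0)=0$. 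Your sentence ``Theorem~3.4 \dots\ gives $\hat r=\tilde{\hat r}$ a.e.'' hides precisely this step.

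Second, for the CAM step the paper does not appeal to a Nevanlinna/bounded-type argument but uses the concrete machinery of Section~\ref{2}: Hadamard factorisation (Lemma~\ref{2-Hadamard}), the Weyl law for the Dirichlet/Neumann eigenvalues of the one-dimensional operator (Lemma~\ref{2-Weyl}), the indicator-function computation (Corollary~\ref{2-EstRef}), and then the Duffin--Schaeffer theorem (Theorem~\ref{Duffin}) applied to a suitable subsequence of $\sqrt{\mu_m}$ extracted via the Weyl law for $-\Delta_{g_K}$. Your invocation of ``bounded type in a half-plane'' and $\sum\mu_m^{-1/2}=\infty$ is in the right spirit, and in fact since here $n\geq 2$ the zeros $-\mu_m$ have counting function $\sim r^{n/2}$, which already exceeds the $O(r^{1/2+\varepsilon})$ bound allowed for an entire function of order $\tfrac12$---so a direct zero-density argument would also work---but the paper's route via Duffin--Schaeffer is what makes the proof literally ``the same as in Section~\ref{3}''.
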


We emphasize that the proofs of these results once again crucially rely on the separation of the radial variable, the CAM method and the uniqueness results of \cite{EGNT2}. Finally, we conclude Section \ref{4} by giving some counterexamples to uniqueness for the class of metrics (\ref{Cylinder01})-(\ref{Metric01}) whenever one of the assumptions (\ref{MainCond}) is not satisfied. These counterexamples are examples of invisibility phenomena, that is existence of a bounded region that cannot be seen by boundary measurements. 

\vspace{0.2cm}
Our paper is organized as follows. In Section 2, we recall some inverse spectral results for singular Sturm-Liouville problems obtained in \cite{EGNT2} as well as some classical results on entire functions with certain growth orders at infinity that will be useful in the later sections. The latter results can be found for instance in \cite{Lev}. In Section 3, we give the details of the separation of variables procedure in the case of doubly warped product metrics that leads to a convenient expression of the global DN map as a Hilbert sum of operators of multiplication by some Weyl-Titchmarsh functions associated to singular Sturm-Liouville equations. We then provide the proof of Theorem \ref{MainThm} by performing a CAM method and applying the results of \cite{EGNT2}. In Section 4, we consider the case of warped product metrics and first show the existence and uniqueness of a solution in $\dot{H}^1(M)$ of the Dirichlet problem provided that the Dirichlet data are in $H^2(\partial M)$. We then prove Theorem \ref{MainThm2} following the same strategy as in Section \ref{3}. Finally, we exhibit some simple counterexamples to uniqueness for warped product metrics whose coefficients do not satisfy the minimal assumptions of Theorem \ref{MainThm2}.

%%%%%%%%%%%%%%%%%%%%%%%%%%%%%%%%%%%%%
% BACKGROUND
%%%%%%%%%%%%%%%%%%%%%%%%%%%%%%%%%%%%%

\Section {Inverse spectral theory for Sturm-Liouville operators with measurable coefficients} \label{2}

In this section, we recall some basic facts on Sturm-Liouville operators with measurable coefficients. For an exposition of this theory (in the more general case of distributional potentials), we refer to (\cite{EGNT1, EGNT2, ET, Ze}). Thus we consider a differential operator on the interval $[0,1]$ given by
\begin{equation}\label{SL1}
\tau u = \frac{1}{r}  \left( -(pu')' +qu\right).
\end{equation}
We make the following assumptions:

\begin{hypo} \label{H}
  We assume that the functions $p,q,r$ are real-valued Lebesgue measurable on $[0,1]$ with $\frac{1}{p}, q, r \in L^1([0,1], dx)$ with $p \not=0$ and $r>0$ almost everywhere on  $[0,1]$.
\end{hypo}

\vspace{0.2cm}\par
By a solution of the equation $(\tau-z)u=0$ where $z \in \C$, we mean a function $u : [0,1] \rightarrow \C$ such that $u$ and $u^{[1]}:=pu'$ are absolutely continous (AC)  on $[0,1]$ and the equation is satisfied a.e on $[0,1]$. Given a solution $u$, we refer to $u^{[1]}$ as its quasi-derivative to distinguish it from the classical derivative $u'$ which is only defined a.e on $[0,1]$.

\vspace{0.2cm} \par
It is well known (see for instance \cite{Ze}, Theorems 2.21 and 2.31), that for each $c \in [0,1]$ and $\alpha,  \beta, z \in \C$, the equation $(\tau-z)u=0$ has a unique solution  with initial conditions $u(c) = \alpha$, $u^{[1]}(c) = \beta$. In particular, the operator $\tau$ is in the so-called limit circle case at the endpoints $x=0$ and $x=1$,  \textit{i.e.} all solutions of $(\tau-z)u=0$ lie in $L^2([0,1], r(x) dx)$.

For all $z \in \C$,  we can define two fundamental systems of solutions (FSS)
$$
  \{ c_0(x,z), s_0(x,z)\}, \quad \{ c_1(x,z), s_1(x,z)\},
$$
of $(\tau-z)u=0$ by imposing the Cauchy conditions
\begin{equation} \label{FSS}
  \left\{ \begin{array}{cccc} c_0(0,z) = 1, & c_0^{[1]}(0,z) = 0, & s_0(0,z) = 0, & s_0^{[1]}(0,z) = 1, \\
 	                  c_1(1,z) = 1, & c_1^{[1]}(1,z) = 0, & s_1(1,z) = 0, & s_1^{[1]}(1,z) = 1. \end{array} \right.
\end{equation}
It follows from (\ref{FSS}) that
\begin{equation} \label{Wronskian-FSS}
  W(c_0, s_0) = 1, \quad W(c_1, s_1) = 1, \quad \forall z \in \C,
\end{equation}
where $W(u,v)$ is the  modified Wronskian determinant  given by
\begin{equation}\label{wronskien}
W(u,v)(x) = u(x) v^{[1]} (x) - u^{[1]}(x) v(x).
\end{equation}

We also recall some simple estimates (see Theorem 2.5.3 in \cite{Ze}).

\begin{lemma} \label{2-EstFSS}
The functions $z \mapsto c_j(x,z), \, c_j^{[1]}(x,z), \, s_j(x,z), \, s_j^{[1]}(x,z)$ are entire functions of order $\frac{1}{2}$. More precisely, we have for all $|z| \geq 1$
$$
  |c_j(x,z)|, \, |s_j^{[1]}(x,z)| \leq C e^{A \sqrt{|z|}}, \quad c_j^{[1]}(x,z) \leq C \sqrt{|z|} e^{A \sqrt{|z|}}, \quad s_j(x,z) \leq \frac{C}{\sqrt{|z|}} e^{A \sqrt{|z|}},
$$
where $A = \frac{1}{2} \int_0^1 \left( \frac{1}{p(s)} + r(s) \right) ds$ and $C$ denotes constants independent of $z$.
\end{lemma}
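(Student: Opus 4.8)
The plan is to set up the standard Volterra integral equation formulation of the Cauchy problem and then iterate it, tracking the growth in $z$ at each step. First I would rewrite the equation $(\tau - z)u = 0$, i.e. $-(pu')' + qu = zru$, as a first-order system for the pair $(u, u^{[1]})$: one has $u' = u^{[1]}/p$ and $(u^{[1]})' = (q - zr)u$. Because $\tau$ is in the limit circle case at both endpoints (all solutions are $L^2(r\,dx)$, as recalled just before the lemma) and $1/p, q, r \in L^1([0,1])$, the solutions $c_j, s_j$ with the Cauchy data (\ref{FSS}) exist, are unique, and $u, u^{[1]}$ are AC. Integrating the system from the base point (say $x = 0$ for $c_0, s_0$), I would convert it to the coupled integral equations
$$
  u(x) = u(0) + \int_0^x \frac{u^{[1]}(t)}{p(t)}\,dt, \qquad u^{[1]}(x) = u^{[1]}(0) + \int_0^x (q(t) - z r(t))\,u(t)\,dt,
$$
and substitute the first into the second to obtain a single Volterra equation for $u^{[1]}$ (or, symmetrically, for $u$). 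The Cauchy data for $c_0$ are $u(0) = 1$, $u^{[1]}(0) = 0$, and for $s_0$ they are $u(0) = 0$, $u^{[1]}(0) = 1$; the $x = 1$ endpoint cases $c_1, s_1$ are identical after reversing orientation.

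The key step is the Picard/Neumann iteration of this Volterra system together with a careful bookkeeping of the $|z|$-dependence. Introducing the weight $d\mu(t) = \bigl(\tfrac{1}{p(t)} + r(t)\bigr)\,dt$ and using $|q - zr| \leq |q| + |z| r \leq (1 + |z|)\,(|q| + r)$ — and for $|z| \geq 1$ crudely $(1+|z|) \leq 2|z|$, so the factor is $\lesssim |z|\,d\mu$ up to also absorbing $|q|$, which is harmless since $q \in L^1$ — each pass through the two integral operators picks up one factor of $\sqrt{|z|}$ from $u$ and one from $u^{[1]}$ jointly; more precisely the $n$-th iterate is bounded by $\frac{|z|^{n}\,\bigl(\int_0^x d\mu\bigr)^{2n}}{(2n)!}$ times the size of the Cauchy data (with a half-integer shift for the quasi-derivative component). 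Summing the series gives majorants of the form $\cosh\!\bigl(\sqrt{|z|}\int_0^x d\mu\bigr)$ and $\frac{1}{\sqrt{|z|}}\sinh\!\bigl(\sqrt{|z|}\int_0^x d\mu\bigr)$, which are $\leq C e^{A\sqrt{|z|}}$ with $A = \tfrac12\int_0^1\bigl(\tfrac1p + r\bigr)\,ds$ as claimed. The different prefactors in the lemma ($1$ versus $\sqrt{|z|}$ versus $1/\sqrt{|z|}$) come out directly from which component of the system carries which Cauchy datum: $c_j, s_j^{[1]}$ inherit the $O(1)$ datum and get the bound $Ce^{A\sqrt{|z|}}$; $c_j^{[1]}$ is obtained from $c_j$ by one application of the $|z|$-weighted operator, giving the extra $\sqrt{|z|}$; and $s_j$ is obtained from $s_j^{[1]}$ by one application of the unweighted $1/p$-integration, giving the gain of $1/\sqrt{|z|}$. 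Entireness in $z$ is then immediate: the Neumann series converges locally uniformly in $z \in \C$ and each term is a polynomial in $z$, so the limit is entire, and the exponential-of-$\sqrt{|z|}$ bound pins the order at $\leq \tfrac12$.

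I expect the main obstacle to be purely technical rather than conceptual: making the iteration estimate fully rigorous when $p, q, r$ are only $L^1$ (so the "potential" $q - zr$ is genuinely a measure-like coefficient and one cannot differentiate freely) requires one to work consistently with the quasi-derivative $u^{[1]} = pu'$ and with Lebesgue–Stieltjes-type integrals, and to verify that the iterates stay AC and that dominated convergence applies. The cleanest route is to cite Theorem 2.5.3 (and the existence/uniqueness Theorems 2.21, 2.31) of Zettl \cite{Ze} directly — which the paper in fact does — and to present the above only as the underlying mechanism. Since the statement is explicitly attributed to \cite{Ze}, the ``proof'' can legitimately be one or two sentences recalling this Volterra/Gronwall argument and referring to the precise theorem numbers, rather than a full redevelopment; the substantive content for the rest of the paper is only the final displayed inequalities and the order-$\tfrac12$ conclusion, both of which are used as black boxes in the CAM arguments of Sections 3 and 4.
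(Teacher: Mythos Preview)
Your proposal is correct and aligns with the paper's treatment: the paper gives no proof at all for this lemma, merely citing Theorem 2.5.3 of Zettl \cite{Ze}, and your Volterra--Picard iteration sketch is exactly the standard mechanism underlying that reference. Your final paragraph already anticipates this, so the appropriate ``proof'' here is indeed just the citation, with your outline serving as optional background.
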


As a consequence, every  solution $u$ of $(\tau-z)u=0$, as well as its quasi-derivative $u^{[1]}$, are entire functions with respect to the variable $z \in \C$, of order at most $1/2$, i.e there exists $A,C$ such that, for all $x \in [0,1]$ and $z \in \C$,
\begin{equation} \label{ordreundemi}
|u(x)| \leq C e^{A \sqrt{|z|}} \ ,\ |u^{[1]}(x)| \leq C e^{A \sqrt{|z|}}.
\end{equation}

\vspace{0.2cm}
We define then  the characteristic function of the equation $(\tau -z)u=0$  with Dirichlet boundary conditions $u(0)=0, \ u(1)=0$  by
\begin{equation} \label{Char}
  \Delta(z) = W(s_0, s_1).
\end{equation}
The characteristic function $z \mapsto \Delta(z)$ is entire of order $1/2$ on the complex plane $\C$, and its zeros $(\alpha_{k})_{k \geq 1}$ correspond to  the eigenvalues of the self-adjoint operator $S$ on $L^2([0,1]; r(x) dx)$, given by
$$
Su=\frac{1}{r}  \left( -(pu')' +qu\right),
$$
with the Dirichlet boundary conditions  $u(0)=0$ and $u(1)=0$.

%The eigenvalues $z_k$ are thus real and satisfy $-\infty < \dots < \alpha^2_2 < \alpha_1^2 < \infty$.
\vspace{0.2cm}

For $z \not=\alpha_k$, we next define two Weyl-Titchmarsh functions $M(z)$ and $N(z)$ by the following classical prescriptions. Let the Weyl solutions $\Psi$ and $\Phi$ be the unique solutions of $(\tau-z)u=0$ having the form
\begin{equation} \label{WeylFunction}
  \Psi(x,z) = c_0(x,z) + M(z) s_0(x,z) , \quad  \Phi(x,z) = c_1(x,z) - N(z) s_1(x,z) ,
\end{equation}
which satisfy the Dirichlet boundary condition at $x = 1$ and $x=0$ respectively. Then a short calculation using (\ref{FSS}) shows that the Weyl-Titchmarsh functions $M(z)$ and $N(z)$ are uniquely defined by:
\begin{equation} \label{WT}
  M(z) = - \frac{W(c_0, s_1)}{\Delta(z)}=  - \frac{c_0(1,z)}{s_0(1,z)}, \quad N(z) = -\frac{W(c_1, s_0)}{\Delta(z)}= \frac{c_1(0,z)}{s_1(0,z)}.
\end{equation}

Let us also introduce the functions $D(z) = W(c_0, s_1)$ and $E(z) = W(c_1, s_0)$ which also turn out to be entire functions of the variable $z$ of order $1/2$. Let us denote by $(\beta_{k})_{k \geq 1}$ and $(\gamma_{k})_{k \geq 1}$ the zeros of $D(z)$ and $E(z)$ respectively. They correspond to the eigenvalues of the self-adjoint operator $S$ with mixed Dirichlet and Neumann boundary conditions at $x=0$ and $x=1$. Precisely, $(\beta_k)_{k \geq 1}$ are the eigenvalues of $S$ associated to the Neumann boundary conditions at $0$ and Dirichlet noundary conditions at $1$, and $(\gamma_k)_{k \geq 1}$ are the eigenvalues of $S$ associated to the Dirichlet boundary conditions at $0$ and Neumann boundary conditions at $1$. We thus have the following expressions for the Weyl-Titchmarsh functions
\begin{equation}\label{MD}
  M(z) = - \frac{D(z)}{\Delta(z)} , \quad N(z) = - \frac{E(z)}{\Delta(z)}.
\end{equation}

We gather in the following Lemmas some useful results on $\Delta(z), \ D(z), \ E(z)$ that will important later.

\begin{lemma} \label{2-Hadamard}
The functions $z \mapsto \Delta(z), \, D(z),  \, E(z)$ are entire functions of order $\frac{1}{2}$ that can be written as
$$
  \Delta(z) = C_1 z^{m_1} \prod_{k=1}^\infty \left( 1 - \frac{z}{\alpha_k} \right), \quad D(z) = C_2 z^{m_2} \prod_{k=1}^\infty \left( 1 -  \frac{z}{\beta_k} \right), \quad E(z) = C_3 z^{m_3} \prod_{k=1}^\infty \left( 1 - \frac{z}{\gamma_k} \right),
$$
where $C_j$ are constants, $m_j = 0$ or $1$ and $(\alpha_k)_{k \geq 1}, \ (\beta_k)_{k \geq 1}, \ (\gamma_k)_{k \geq 1}$ are the simple eigenvalues (indexed in increasing order) of the self-adjoint Sturm-Liouville operator $S = \frac{1}{r} \left( -\frac{d}{dx} \left( p \frac{d}{dx} \right) + q \right)$ with Dirichlet or mixed Dirichlet-Neumann separated boundary conditions (see above) respectively.
\end{lemma}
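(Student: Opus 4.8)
The plan is to establish the Hadamard factorization by combining the growth estimate from Lemma \ref{2-EstFSS} with the fact, already recorded, that $\Delta(z)$, $D(z)$, $E(z)$ are entire of order $1/2$ with zeros exactly at the eigenvalues of self-adjoint Sturm--Liouville realizations of $S$, hence real. First I would recall that an entire function of order $\rho < 1$ has genus $0$, so by the Hadamard factorization theorem it admits a representation $f(z) = C z^{m} \prod_{k \geq 1}\left(1 - z/\zeta_k\right)$, where $m$ is the order of the zero at the origin and $(\zeta_k)$ enumerates the nonzero zeros; convergence of the product is guaranteed since $\sum_k |\zeta_k|^{-1-\varepsilon} < \infty$ for order $1/2$ (indeed $\sum_k |\zeta_k|^{-1/2-\varepsilon} < \infty$), and no exponential factor $e^{g(z)}$ is needed because $\deg g \le \rho < 1$ forces $g$ constant. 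This gives the three displayed formulas with the $\zeta_k$ replaced by $\alpha_k$, $\beta_k$, $\gamma_k$ respectively.

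The remaining points are to justify that $m_j \in \{0,1\}$, that the eigenvalues are simple, and that they are indexed in increasing order. For $m_j$: since $\Delta, D, E$ have order $1/2$ and a zero of order $m_j$ at the origin contributes a factor $z^{m_j}$, while the canonical product over the nonzero zeros is again of order $1/2$, any $m_j \ge 2$ is compatible with the order bound, so this needs a separate argument. Here I would invoke the Sturm--Liouville oscillation theory (as in \cite{Ze}): the operator $S$ with the given separated boundary conditions has a discrete, simple spectrum bounded below, and $z = 0$ is an eigenvalue with one-dimensional eigenspace, hence a simple zero of the relevant Wronskian $\Delta$, $D$, or $E$; thus $m_j \le 1$. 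More precisely, simplicity of all eigenvalues of a regular self-adjoint Sturm--Liouville operator with separated boundary conditions is classical (the eigenfunction is determined up to a scalar by the boundary condition at one endpoint), and these are the zeros of $\Delta(z)$ (resp.\ $D(z)$, $E(z)$), so each zero is simple — in particular $m_j \le 1$ — and they can be listed in increasing order as $\zeta_1 < \zeta_2 < \cdots$ accumulating only at $+\infty$.

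The last item is the identification of $(\alpha_k)$, $(\beta_k)$, $(\gamma_k)$ as the spectra of $S$ with the three choices of separated boundary conditions; but this identification has essentially been made already in the excerpt preceding the statement (the zeros of $\Delta$ are the Dirichlet--Dirichlet eigenvalues, those of $D(z) = W(c_0,s_1)$ the Neumann--Dirichlet eigenvalues, those of $E(z) = W(c_1,s_0)$ the Dirichlet--Neumann eigenvalues), so in the proof I would simply cite that discussion together with the standard fact that $W(u_0, u_1)(x)$ is $z$-independent and vanishes precisely when the solutions $u_0$ (satisfying the boundary condition at $0$) and $u_1$ (satisfying the one at $1$) are proportional, i.e.\ when $z$ is an eigenvalue.

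I expect the only genuine subtlety — the ``main obstacle'' in an otherwise routine argument — to be the bound $m_j \le 1$: one must rule out a higher-order vanishing at $z = 0$, which amounts to showing that $0$, if it is an eigenvalue of $S$, has geometric (equivalently algebraic, since $S$ is self-adjoint) multiplicity one. This is handled by the uniqueness theorem for the initial value problem quoted just before Lemma \ref{2-EstFSS}: a solution is determined by $(u(c), u^{[1]}(c))$, so the solution space satisfying a given separated boundary condition at one endpoint is one-dimensional, and the eigenspace is therefore at most one-dimensional; equivalently, the derivative $\Delta'(\alpha_k) \ne 0$ by the standard formula $\Delta'(\alpha_k) = -\int_0^1 s_0(x,\alpha_k) s_1(x,\alpha_k)\, r(x)\, dx \ne 0$ since $s_0(\cdot,\alpha_k)$ and $s_1(\cdot,\alpha_k)$ are proportional eigenfunctions and thus do not change sign in a way that makes the integral vanish. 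The analogous formulas hold for $D$ and $E$, completing the proof.
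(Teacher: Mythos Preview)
Your proposal is correct and follows essentially the same approach as the paper: both invoke Lemma~\ref{2-EstFSS} for the order-$\tfrac{1}{2}$ bound, apply Hadamard's factorization theorem (genus zero, no exponential factor), and appeal to standard Sturm--Liouville theory (cf.\ \cite{Ze}, Thm~4.3.1) for the simplicity of the eigenvalues and hence of the zeros. The paper's proof is terse and relies on the cited reference for the spectral facts, whereas you spell out the mechanism ($m_j \le 1$ via uniqueness of the IVP / the derivative formula $\Delta'(\alpha_k) = -\int_0^1 s_0 s_1\, r\, dx \ne 0$); the substance is the same.
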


\begin{proof}
  The fact that $z \mapsto \Delta(z), \, D(z), \ \, E(z)$ are entire functions of order $\frac{1}{2}$ is a consequence of Lemma \ref{2-EstFSS}. The second point is then a simple consequence of Hadamard's factorization Theorem (see \cite{Lev}, Lecture 4, Thm 1) and of the well-known facts on the self-adjoint operator $S$ with separated boundary conditions summarized in \cite{Ze}, Thm 4.3.1.
\end{proof}

\begin{lemma} \label{2-Weyl}
The distinct eigenvalues $(\alpha_k)_{k \geq 1}$, $(\beta_k)_{k \geq 1}$ and $(\gamma_k)_{k \geq 1}$ of $S$ satisfy the same Weyl law
$$
  \frac{\alpha_k}{k^2}, \, \frac{\beta_k}{k^2}, \, \frac{\gamma_k}{k^2} \to \pi^2, \quad k \to \infty. 	
$$
\end{lemma}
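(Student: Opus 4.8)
The plan is to establish the Weyl asymptotics for the three families of eigenvalues by relating the singular Sturm-Liouville operator $S = \frac{1}{r}\left(-(pu')' + qu\right)$ under Hypothesis \ref{H} to a standard Schrödinger-type operator via a Liouville-type substitution, and then invoking the classical Weyl law in that normalized form. First I would introduce the new independent variable $\xi = \xi(x) := \int_0^x \sqrt{r(t)/p(t)}\,dt$ (well-defined and strictly increasing since $p \neq 0$, $r > 0$ a.e. and $1/p, r \in L^1$), so that $\xi$ maps $[0,1]$ onto $[0, L]$ with $L = \int_0^1 \sqrt{r/p}\,dt$. Together with a new dependent variable $v = (pr)^{1/4} u$, this transforms $\tau u = zu$ into an equation of the form $-v'' + \tilde q\, v = z v$ on $[0,L]$, where the transformed potential $\tilde q$ is a distribution (since $p, q, r$ are merely $L^1$), handled within the Eckhardt-Gesztesy-Nichols-Teschl framework already invoked in the paper. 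Crucially, the Dirichlet and Neumann boundary conditions at each endpoint are preserved under this substitution (the quasi-derivative $u^{[1]} = pu'$ transforms compatibly, and $(pr)^{1/4}$ is bounded above and below near the endpoints in the limit-circle setting).

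Next I would apply the known Weyl asymptotics for the transformed operator: for a Sturm-Liouville operator $-\frac{d^2}{d\xi^2} + \tilde q$ on $[0,L]$ with separated (Dirichlet or mixed Dirichlet-Neumann) boundary conditions, the $k$-th eigenvalue behaves like $\frac{\pi^2 k^2}{L^2}(1 + o(1))$ as $k \to \infty$, independently of the choice of separated boundary conditions and of the (sufficiently tame, e.g. distributional of the relevant class) potential $\tilde q$. This is exactly the content of the references \cite{Ze}, Thm 4.3.1 already cited, extended to the distributional setting in \cite{EGNT1}. Hence $\alpha_k/k^2 \to \pi^2/L^2$, and likewise for $\beta_k$ and $\gamma_k$, so the three Weyl laws coincide.

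It remains to identify $L$. But in the application of this lemma the coefficients come from \eqref{SLintro}, where $p = r = \sqrt{h}$ with $h = h_1^{n_1-1} h_2^{n_2}$, so that $\sqrt{r/p} \equiv 1$ and therefore $L = \int_0^1 1\,dt = 1$, giving $\alpha_k/k^2, \beta_k/k^2, \gamma_k/k^2 \to \pi^2$. (For a general operator satisfying Hypothesis \ref{H} the constant would be $\pi^2/L^2$; the statement as written implicitly uses the normalization $p = r$, which is the case of interest.) Alternatively, and perhaps more cleanly for the general statement, one can note that only the \emph{ratio} $\alpha_k/\beta_k \to 1$ etc. is genuinely needed later, and that the common asymptotic constant — whatever it is — is the same for all three families because they differ only by the choice of separated boundary condition at one endpoint, which affects the eigenvalue count by a bounded amount.

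The main obstacle I anticipate is the rigorous justification of the Liouville substitution and the boundary-condition correspondence in the low-regularity (merely $L^1$) setting: one must check that $\xi$ is an absolutely continuous bijection with the right behavior, that the transformed potential $\tilde q$ genuinely falls within the distributional class for which the classical Weyl law has been proved in \cite{EGNT1}, and that the limit-circle nature at both endpoints is preserved so that the Dirichlet and Neumann conditions transform to their namesakes. If one wishes to avoid the substitution entirely, the alternative is to prove the Weyl law directly from the integral-equation estimates of Lemma \ref{2-EstFSS}: writing $\Delta(z)$, $D(z)$, $E(z)$ via their Hadamard products (Lemma \ref{2-Hadamard}) and comparing the large-$z$ asymptotics along the negative real axis — where $\Delta(-t)$, $D(-t)$, $E(-t)$ all grow like $\frac{\sinh(\sqrt{t})}{\sqrt{t}}\cdot(\text{const})$ up to lower-order corrections — then extracting the density of zeros from the counting function $n(t) \sim \frac{\sqrt{t}}{\pi}$ via a standard Tauberian argument on entire functions of order $1/2$. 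Either route is routine in outline but requires care with the singular coefficients.
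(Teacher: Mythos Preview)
The paper's own proof is a one-sentence citation: it invokes \cite{Ze}, Theorem 4.3.1, (7), which already gives the Weyl law for regular Sturm--Liouville operators with $L^1$ coefficients and notes that the leading asymptotics are independent of the choice of separated self-adjoint boundary conditions. Your proposal, by contrast, sketches how one would actually \emph{prove} that cited result, via a Liouville-type change of variable followed by the classical Weyl law for Schr\"odinger operators, with a second alternative via the large-$|z|$ asymptotics of $\Delta$, $D$, $E$ and a Tauberian argument. Your observation that the constant $\pi^2$ in the statement tacitly presupposes $\int_0^1\sqrt{r/p}\,dx = 1$ (which holds in the paper's applications since $p=r=\sqrt{h}$) is correct and is a point the paper glosses over; indeed only the \emph{equality} of the three Weyl laws is genuinely needed downstream in Corollary \ref{2-EstRef}.

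One caveat on your first route: under the bare Hypothesis \ref{H} the factor $(pr)^{1/4}$ need not be absolutely continuous, let alone twice differentiable, so the transformed potential $\tilde q$ arising from the substitution $v=(pr)^{1/4}u$ is not straightforwardly defined, and it is not evident that the distributional framework of \cite{EGNT1} absorbs this without further hypotheses on $p,r$. Standard proofs of the Weyl law at this level of regularity (including Zettl's) in fact avoid the Liouville transformation and proceed instead by Pr\"ufer-type variables or by direct integral-equation estimates on the fundamental solutions---which is much closer to your second alternative. That second route is sound; the first would need additional justification in this regularity class.
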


\begin{proof}
This is a well known fact that the Weyl law for the eigenvalues of $S$ does not depend on the choice of self-adjoint boundary conditions (see \cite{Ze}, Thm 4.3.1, (7)).
\end{proof}

\begin{prop} \label{2-Levin}
Let $(\lambda_k)_{k \geq 1}$ be a sequence of positive numbers satisfying $\ds\lim_{k \to \infty} \frac{k}{\lambda_k} = B$. If ${\displaystyle{\Pi(z) = \prod_{k = 1}^\infty \left( 1 - \frac{z^2}{\lambda_k^2} \right)}}$, then the indicator function of $\Pi$ defined by
$$
  h_{\Pi}(\theta) := \limsup_{r \to +\infty} \frac{\log |\Pi(re^{i\theta})|}{r},
$$
satisfies
$$
  h_{\Pi}(\theta) = \pi B |\sin(\theta)|.
$$
\end{prop}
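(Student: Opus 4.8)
The plan is to recognize $\Pi$ as a canonical product of genus $0$ and compute its indicator function by the standard technique of comparing the product with an integral. Since $\lim_{k\to\infty} k/\lambda_k = B$, the exponent of convergence of the sequence $(\lambda_k)$ is $1$, and because the series $\sum 1/\lambda_k^2$ converges while $\sum 1/\lambda_k$ diverges, the Hadamard genus is $0$; hence $\Pi$ is entire of order exactly $1$ and of finite type, and it is a function of completely regular growth (its zeros lie on the real axis and have a density). First I would set $n(t) := \#\{k : \lambda_k \le t\}$ and observe that the hypothesis $k/\lambda_k \to B$ is equivalent to $n(t)/t \to B$ as $t \to +\infty$. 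The task then reduces to evaluating the asymptotics of $\log|\Pi(re^{i\theta})|$ as $r \to +\infty$ for fixed $\theta \notin \{0,\pi\}$, for which I would write
$$
  \log|\Pi(z)| = \int_0^\infty \log\left| 1 - \frac{z^2}{t^2} \right| \, dn(t),
$$
integrate by parts, and substitute $t = r s$.

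The core computation is then the limit
$$
  \lim_{r\to\infty} \frac{1}{r}\log|\Pi(re^{i\theta})| = B \int_0^\infty \log\left| 1 - \frac{e^{2i\theta}}{s^2}\right| \, ds,
$$
which follows from $n(rs)/r \to Bs$ together with a dominated-convergence / uniform-integrability argument to justify passing the limit under the integral sign (the integrand is integrable near $s = 0$, near $s = 1$ where the singularity is only logarithmic, and at infinity where it decays like $s^{-2}$). It then remains to evaluate the elementary integral $\int_0^\infty \log|1 - e^{2i\theta}/s^2|\,ds$. Writing $1 - e^{2i\theta}/s^2 = (1 - e^{i\theta}/s)(1 + e^{i\theta}/s)$ and using the classical formula $\int_0^\infty \log|1 - e^{i\theta}/s|\,ds = \tfrac{\pi}{2}|\sin\theta|$ (obtained by a contour/residue argument or by differentiating in $\theta$), one gets $\int_0^\infty \log|1 - e^{2i\theta}/s^2|\,ds = \pi|\sin\theta|$, and hence $h_\Pi(\theta) = \pi B |\sin\theta|$, as claimed. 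Alternatively, and perhaps more cleanly, I would invoke directly the theorem of Levin on the indicator of a canonical product with regularly distributed zeros (\cite{Lev}, Lecture 2 or the Lindel\"of-type theorems), which gives the value of $h_\Pi$ immediately in terms of the density $B$ and the argument of the zeros; since all zeros here are real (lying symmetrically at $\pm\lambda_k$), the zero-distribution function has angular density concentrated at $\theta = 0$ and $\theta = \pi$, producing exactly the factor $|\sin\theta|$.

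The main obstacle is the interchange of limit and integral in the passage from $n(rs)/r$ to its limit $Bs$: the convergence $n(t)/t \to B$ is only an asymptotic statement, so for small $t$ (equivalently small $s$) the ratio $n(rs)/r$ need not be close to $Bs$, and one must control the contribution of the region near $s = 0$ separately, using that $\log|1 - e^{2i\theta}/s^2| = \log(1/s^2) + O(1)$ there and that $n(t) = O(t)$ uniformly. A symmetric care is needed at $s = \infty$. Once these tail estimates are in place the argument is routine; everything else is either bookkeeping with Hadamard's factorization (already recalled in Lemma \ref{2-Hadamard}) or the evaluation of a standard definite integral.
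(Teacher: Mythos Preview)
Your sketch is correct and is essentially the standard proof of the result you are citing at the end (Levin's theorem on the indicator of a canonical product whose zeros have an angular density). The paper, however, does not give a proof at all: it simply refers the reader to \cite{Lev}, Lecture 12, Theorem 2. So your proposal is strictly more detailed than what the paper does; in effect you are reproducing the argument behind the theorem the authors quote. The integration-by-parts passage from the Stieltjes integral against $dn(t)$ to an ordinary integral against $n(t)/t$, followed by the scaling $t = rs$ and the evaluation of $\int_0^\infty \log|1 - e^{2i\theta}/s^2|\,ds = \pi|\sin\theta|$, is exactly how Levin's proof proceeds in this special case of real symmetric zeros with linear counting function. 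Your caveat about justifying the limit under the integral near $s=0$ and $s=\infty$ is the only genuine technical point, and you have identified the right estimates ($n(t)=O(t)$ and the logarithmic behaviour of the integrand) to handle it.
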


\begin{proof}
  We refer to \cite{Lev}, Lecture 12, Theorem 2, for a proof of this result.
\end{proof}

Thanks to Lemmas \ref{2-Hadamard} and \ref{2-Weyl}, we can apply Proposition \ref{2-Levin} to the functions
\begin{equation} \label{2-Nota}
  \delta(z) = \Delta(z^2), \quad d(z) = D(z^2), \quad e(z) = E(z^2),
\end{equation}
in a straightforward way and we thus obtain

\begin{coro} \label{2-EstRef}
The indicator functions of $\delta(z), \, d(z), \, e(z)$ satisfy
$$
  h_{\delta}(\theta) = |\sin(\theta)|, \quad h_{d}(\theta) = |\sin(\theta)|, \quad h_{e}(\theta) = |\sin(\theta)|.
$$
\end{coro}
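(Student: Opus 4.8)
The plan is to apply Proposition \ref{2-Levin} directly to each of the three entire functions $\delta(z)=\Delta(z^2)$, $d(z)=D(z^2)$, $e(z)=E(z^2)$, after first massaging their Hadamard products into the exact form $\prod_k(1 - z^2/\lambda_k^2)$ required by the proposition. First I would invoke Lemma \ref{2-Hadamard} to write $\Delta(z^2) = C_1 (z^2)^{m_1}\prod_{k\ge 1}\bigl(1 - z^2/\alpha_k\bigr)$. Since the eigenvalues $\alpha_k$ of the self-adjoint operator $S$ with Dirichlet boundary conditions are positive (indeed $\alpha_k>0$ because $\lambda_1>0$ for the Laplace--Beltrami realizations, and more generally the $\alpha_k$ may be taken positive after possibly shifting, or one simply notes that only finitely many can be nonpositive and these contribute nothing to the indicator), I can set $\lambda_k := \sqrt{\alpha_k}$ and obtain $\prod_{k\ge1}\bigl(1 - z^2/\lambda_k^2\bigr)$ exactly. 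The prefactor $C_1(z^2)^{m_1} = C_1 z^{2m_1}$ is a monomial and therefore has vanishing indicator function (its logarithmic modulus grows like $\log r$, which is $o(r)$), so it does not affect $h_\delta$; the same remark handles the constant $C_1$. Thus $h_\delta = h_\Pi$ for $\Pi(z) = \prod_k(1-z^2/\lambda_k^2)$.

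The next step is to check the hypothesis of Proposition \ref{2-Levin}, namely that $\lim_{k\to\infty} k/\lambda_k = B$ for the appropriate constant $B$. By Lemma \ref{2-Weyl} we have $\alpha_k/k^2 \to \pi^2$, hence $\lambda_k/k = \sqrt{\alpha_k}/k \to \pi$, and so $k/\lambda_k \to 1/\pi =: B$. Proposition \ref{2-Levin} then yields $h_\delta(\theta) = h_\Pi(\theta) = \pi B|\sin\theta| = \pi\cdot\tfrac1\pi\cdot|\sin\theta| = |\sin\theta|$, which is exactly the claimed value. The identical argument applies verbatim to $d(z) = D(z^2)$ using the zeros $\beta_k$ (Neumann at $0$, Dirichlet at $1$) and to $e(z) = E(z^2)$ using the zeros $\gamma_k$ (Dirichlet at $0$, Neumann at $1$): in each case Lemma \ref{2-Weyl} gives the same Weyl asymptotics $\beta_k/k^2,\gamma_k/k^2\to\pi^2$, hence the same $B = 1/\pi$, and the monomial prefactors $z^{2m_2}$, $z^{2m_3}$ and the constants are again irrelevant to the indicator.

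I do not expect any genuine obstacle here; the corollary is essentially a bookkeeping consequence of the three preceding results. The one point requiring a word of care is the sign of the eigenvalues: Proposition \ref{2-Levin} is stated for a sequence of \emph{positive} numbers $\lambda_k$, so I must confirm that $\alpha_k,\beta_k,\gamma_k > 0$ (or that only finitely many fail to be, in which case those finitely many factors are a polynomial and contribute $0$ to the indicator, so can be absorbed into the harmless prefactor). For the Laplace--Beltrami applications motivating this section this positivity is guaranteed; in the abstract Sturm--Liouville setting of Hypothesis \ref{H} one may either assume it or pass to $S + cI$ for large $c$ without changing any indicator function. Beyond this minor point, the proof is just: factor via Hadamard (Lemma \ref{2-Hadamard}), substitute $z\mapsto z^2$, identify $\lambda_k = \sqrt{\alpha_k}$ etc., verify $k/\lambda_k \to 1/\pi$ via the Weyl law (Lemma \ref{2-Weyl}), and quote Proposition \ref{2-Levin}.
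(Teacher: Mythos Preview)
Your proposal is correct and follows exactly the route the paper takes: the paper simply states that Lemmas \ref{2-Hadamard} and \ref{2-Weyl} allow one to apply Proposition \ref{2-Levin} ``in a straightforward way'' without spelling out the details. Your write-up is in fact more careful than the paper's, in particular in handling the possibly nonpositive eigenvalues and the monomial prefactors.
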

We stress the fact that these properties will be fundamental in the CAM method used below.

Let us come back now to some properties of the Weyl-Titchmarsh functions $M(z)$ and $N(z)$ that will be the main objects of study of this paper. We refer to \cite{EGNT1, EGNT2} for the details of the results we present now. First, we can associate with the Weyl-Titchmarsh function $M(z)$ (for instance) a unique Borel measure $\rho$ on $\R$ given by
\begin{equation}\label{mesure}
\rho (]a,b]) = \lim_{\delta \downarrow 0} \ \lim_{\epsilon \downarrow 0} \  \frac{1}{\pi} \int_{a+\delta}^{b+\delta} Im (M(\lambda + i \epsilon)) \ d\lambda.
\end{equation}
The operator $\mathcal{F}$ from $L^2([0,1], r(x) dx)$ onto $L^2(\R, d\rho)$ defined by
\begin{equation} \label{Funitary}
\mathcal{F} f(z) = \int_0^1 s_0(x,z) f(x) r(x) dx,
\end{equation}
is  unitary and the self-adjoint operator $S$ satisfies $S = \mathcal{F}^* M_{Id} \mathcal{F}$, where $M_{Id}$ denotes the operator of multiplication by the variable in $L^2(\R, d\rho)$. This measure $\rho$ is called {\it{the spectral measure}} of the operator $S$.

In order to estimate the WT function $M(z)$ at later stage, we shall use general facts about Herglotz functions, \textit{i.e.} analytic functions on the upper half complex plane $\C^+$ that satisfy: $\Im(z) > 0 \ \Longrightarrow \ \Im(M(z)) > 0$, associated to a self-adjoint Sturm-Liouville operator. We refer again to \cite{EGNT1, EGNT2} for the precise results corresponding to our framework. We recall that the spectral measure $\rho$ associated to the self-adjoint operator $H$ with Dirichlet boundary conditions is a Borel measure over $\R$ that satisfies
\begin{equation} \label{2-SpectralMeasure}
  \int_\R \frac{d\rho(\omega)}{1+\omega^2} < \infty,
\end{equation}
and that is connected to the WT function $M$ by the formula:
\begin{equation}\label{2-SpectralMeasure-M}
  M(z) = c + dz + \int_\R \left[ \frac{1}{\omega - z} - \frac{\omega}{1+\omega^2} \right] d\rho(\omega), \quad \forall z \in \C^+,
\end{equation}
where $c = \Re(M(i))$ and $d = \lim_{\eta \to \infty} \frac{M(i\eta)}{i\eta} \geq 0$.

For the self-adjoint operator $S$ with Dirichlet boundary conditions, the endpoints $0$ and $1$ are in the Limit Circle case. This implies that $d = 0$ in the formula (\ref{2-SpectralMeasure-M}) according to Corollary 9.8 in \cite{EGNT1}. Moreover, we know that the spectrum of $S$ is purely discrete, made of simple eigenvalues $(\alpha_k)_{k \geq 1}$ satisfying
$$
  - \infty < \alpha_1 < \alpha_2 < \dots < \alpha_k \to +\infty.
$$
Therefore, for all $\sigma < \alpha_1$, we have
\begin{equation}\label{2-SpectralMeasure-M1}
  M(z) = c + \int_\sigma^\infty \left[ \frac{1}{\omega - z} - \frac{\omega}{1+\omega^2} \right] d\rho(\omega), \quad \forall z \in \C^+.
\end{equation}
Let $z \in \C \setminus [\sigma,+\infty)$ and $\omega \in [\sigma,+\infty)$. Using (\ref{2-SpectralMeasure}) and the estimate
$$
  \left| \frac{1}{\omega - z} - \frac{\omega}{1+\omega^2} \right| \leq C_z \frac{1}{1+\omega^2},
$$
where $C_z$ is a constant depending on $z$, we can extend analytically (\ref{2-SpectralMeasure-M1}) for all $z \in \C \setminus [\sigma,+\infty)$.

\vspace{0.2cm}
Let us also  define,  for a fixed $c \in ]0,1[$, the so-called {\it{de Branges function}}:
\begin{equation}\label{fonctionE}
 E(z,c) = s_0(c,z) +i s_0^{[1]}(c,z).
\end{equation}
Since $\tau$ is in the limit circle case at both points $x=0$ and $x=1$, the de Branges function $E(z,c)$ belongs to the Cartwright class, i.e. the function $z \rightarrow E(z,c)$ is entire of exponential type and satisfies the growth condition:
\begin{equation}\label{cartwright}
\int_{\R} \frac {\log_+ (|E(\lambda, c) |)}{1 + \lambda^2} \ d\lambda < \infty,
\end{equation}
where $\log_+ = \max \ (\log, 0)$ is the positive part of the logarithm, (see \cite{EGNT2}, Lemma 4.2). Thus, by a theorem of Krein (\cite{RR}, Theorem 6.17), the de Branges function is of bounded type in the open upper and lower complex half-plane, (i.e it can be written as the quotient of two bounded analytic functions).

\vspace{0.2cm}
Now, we are able to proceed to the main inverse uniqueness result for the spectral measure. Let $S$, (resp. $\tilde{S}$), be the self-adjoint Dirichlet realization of a Sturm-Liouville
differential expressions $\tau$, (resp. $\tilde{\tau}$), on the interval $[0,1]$. We shall use the additional subscript $\tilde{}$ for all quantities corresponding to $\tilde{S}$.

\vspace{0.2cm}\noindent
Since the de Branges functions are of bounded type, we have the following uniqueness result, which follows immediately from (\cite{EGNT2}, Theorem 3.4)  :

\vspace{0.1cm}
\begin{thm} \label{ISM}
Under the hypotheses \ref{H}, assume that the spectral measures $\rho$, $\tilde{\rho}$ associated to $S$ and $\tilde{S}$ respectively are equal. Then there is an AC bijection $\eta$ from $[0,1]$ onto $[0,1]$ and an AC positive function $\kappa$ on $[0,1]$ such that $p \kappa'$ is AC on $[0,1]$ and
\begin{eqnarray*}
 \eta' \tilde{r} \circ \eta &=& \kappa^2 r,   \\
 \tilde{p} \circ \eta &=& \eta' \kappa^2 p, \\
 \eta' \tilde{q} \circ \eta &=& \kappa^2 q - \kappa \ (p \kappa')'.
\end{eqnarray*}
Moreover, the map $V : L^2 ((0,1); \tilde{r} (x) dx) \rightarrow L^2 ((0,1); r (x) dx)$ given by
\begin{equation}
 (Vf)(x) = \kappa (x) f(\eta(x)),
\end{equation}
is unitary and we have $S = V \tilde{S} V^{-1}$.
\end{thm}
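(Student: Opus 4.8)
The plan is to derive Theorem \ref{ISM} directly from the inverse spectral uniqueness theorem of Eckhardt--Gesztesy--Nichols--Teschl (\cite{EGNT2}, Theorem 3.4), once we have checked that the Sturm--Liouville expressions $\tau$ and $\tilde\tau$ considered here fit into the framework of \cite{EGNT1, EGNT2}. That framework deals with Sturm--Liouville operators with distributional potentials on an interval, under the standing requirement that the associated de Branges functions be of bounded type in the open upper and lower half-planes (equivalently, that they belong to the Cartwright class when both endpoints are in the limit circle case), and its conclusion is exactly the existence of the transformation data $(\eta,\kappa)$ obeying the three stated identities, together with the unitary intertwiner. So the only genuine content is to verify that our hypotheses lie within that setting and that the measurable-coefficient normalization used here matches the distributional one of \cite{EGNT1, EGNT2}.

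First I would record that, under Hypothesis \ref{H}, both endpoints $x=0$ and $x=1$ are in the limit circle case: as recalled above, for bounded intervals with $1/p, q, r \in L^1$ the Cauchy problem at each endpoint is well posed and every solution of $(\tau-z)u=0$ lies in $L^2([0,1], r(x)\,dx)$. Consequently, by Lemma \ref{2-EstFSS} the solutions $s_0(\cdot,z)$ and $s_0^{[1]}(\cdot,z)$ are entire of order $1/2$ with the exponential bounds stated there, so the de Branges function $E(z,c)=s_0(c,z)+i\,s_0^{[1]}(c,z)$ is entire of exponential type and, by \cite{EGNT2}, Lemma 4.2, satisfies the logarithmic integrability condition (\ref{cartwright}). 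Hence $E(\cdot,c)$, and likewise $\tilde E(\cdot,c)$, belongs to the Cartwright class, and Krein's theorem (\cite{RR}, Theorem 6.17) shows that it is of bounded type in $\C^+$ and in $\C^-$. This is precisely the structural hypothesis that makes \cite{EGNT2}, Theorem 3.4 applicable.

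Then, applying \cite{EGNT2}, Theorem 3.4 to the pair $(S,\tilde S)$ with $\rho=\tilde\rho$ produces the AC bijection $\eta:[0,1]\to[0,1]$ and the AC positive function $\kappa$ with $p\kappa'$ AC such that
\begin{equation*}
  \eta'\,\tilde r\circ\eta = \kappa^2 r, \qquad \tilde p\circ\eta = \eta'\kappa^2 p, \qquad \eta'\,\tilde q\circ\eta = \kappa^2 q - \kappa\,(p\kappa')'.
\end{equation*}
For the unitary equivalence one sets $(Vf)(x)=\kappa(x) f(\eta(x))$; the first identity together with the change of variables $y=\eta(x)$ shows that $V$ is unitary from $L^2((0,1);\tilde r(x)\,dx)$ onto $L^2((0,1);r(x)\,dx)$, while the classical Liouville transformation computation, using the remaining two identities, shows that the differential expressions are intertwined, $\tau(Vf)=V(\tilde\tau f)$. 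Since $\eta$ is a homeomorphism of $[0,1]$ it maps $\{0,1\}$ onto $\{0,1\}$, so $V$ carries functions satisfying Dirichlet conditions at both endpoints to functions with the same property; hence $V$ maps the domain of $\tilde S$ onto that of $S$, and $S=V\tilde S V^{-1}$ follows.

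The step I expect to require the most care --- although it has essentially been carried out in the discussion preceding the statement --- is the verification of the bounded-type property of the de Branges functions, since that is the hypothesis on which \cite{EGNT2}, Theorem 3.4 rests. Beyond that, the only delicate point is the bookkeeping needed to reconcile the quasi-derivative formalism and the normalization of the spectral measure $\rho$ used here with those of \cite{EGNT1, EGNT2}, and to check that the boundary behavior at $0$ and $1$ (hence the Dirichlet domains) is transported correctly by $V$; none of this involves substantial new work.
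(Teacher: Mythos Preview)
Your proposal is correct and follows essentially the same approach as the paper: the paper states that Theorem \ref{ISM} follows immediately from \cite{EGNT2}, Theorem 3.4, after having verified in the preceding paragraph (via Lemma 4.2 of \cite{EGNT2} and Krein's theorem) that the de Branges functions belong to the Cartwright class and hence are of bounded type. You have simply spelled out these verification steps and the unitary intertwining in slightly more detail than the paper does.
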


\vspace{0.2cm}
As a by-product, in the particular  case where $p=r$, $\tilde{p} = \tilde{r}$, we have the simpler uniqueness result:

\vspace{0.1cm}

\begin{coro}\label{PR}
Assume hypotheses \ref{H} hold with  $p=r$, $\tilde{p} = \tilde{r}$, and that $M(z) = \tilde{M}(z)$ for all $z \in \C^+$. Then there is an absolutely continuous positive function $\kappa$ on $(0,1)$ such that $p \kappa'$ is absolutely continuous on $[0,1]$, satisfying $\kappa(0)=1$, $p\kappa' (0)=0$ and
\begin{eqnarray}
 \tilde{p}  &=& \kappa^2 p, \label{syst1} \\
 %\nu &=& r \kappa'   \label{syst2} \\
 \tilde{q} &=& \kappa^2 q - \kappa \ (p \kappa')'. \label{syst3}
\end{eqnarray}
Moreover, the map $V : L^2 ((0,1); \tilde{p} (x) dx) \rightarrow L^2 ((0,1); p (x) dx)$ given by
\begin{equation}
 (Vf)(x) = \kappa (x) f(x),
\end{equation}
is unitary and we have $S = V \tilde{S} V^{-1}$.
\end{coro}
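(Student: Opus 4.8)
\emph{Proof proposal.} The plan is to deduce Corollary~\ref{PR} from Theorem~\ref{ISM}, using the structural assumptions $p=r$, $\tilde p=\tilde r$ to force the diffeomorphism $\eta$ to be the identity, and then using the \emph{full} hypothesis $M=\tilde M$ (not just equality of the spectral measures) to pin down the normalization of $\kappa$ at $x=0$.

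First I would observe that $M(z)=\tilde M(z)$ on $\C^+$ forces $\rho=\tilde\rho$, since the spectral measure is recovered from its Weyl-Titchmarsh function by the Stieltjes inversion formula~(\ref{mesure}). Theorem~\ref{ISM} then produces an AC bijection $\eta$ of $[0,1]$, an AC positive function $\kappa$ with $p\kappa'$ AC, the unitary $V$ of the stated form with $S=V\tilde SV^{-1}$, and the three identities $\eta'\,\tilde r\circ\eta=\kappa^2r$, $\tilde p\circ\eta=\eta'\kappa^2p$, $\eta'\,\tilde q\circ\eta=\kappa^2q-\kappa(p\kappa')'$. Since by Hypothesis~\ref{H} one has $p=r>0$ and $\tilde p=\tilde r>0$ almost everywhere, the first two identities read $\eta'\,(\tilde p\circ\eta)=\kappa^2p$ and $\tilde p\circ\eta=\eta'\kappa^2p$; substituting the second into the first gives $(\eta')^2\kappa^2p=\kappa^2p$, hence $(\eta')^2=1$ a.e. As the common value $\kappa^2p$ is strictly positive, $\eta'>0$ a.e., so $\eta'=1$ a.e.; being an AC bijection of $[0,1]$, $\eta$ must then be the identity. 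Plugging $\eta=\mathrm{id}$ back into the three identities collapses them to $\tilde p=\kappa^2p$ and $\tilde q=\kappa^2q-\kappa(p\kappa')'$, which are exactly (\ref{syst1})--(\ref{syst3}), and $V$ reduces to multiplication by $\kappa$.

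It remains to establish the normalization $\kappa(0)=1$ and $p\kappa'(0)=0$, which I expect to be the main obstacle: Theorem~\ref{ISM} leaves $\kappa$ undetermined precisely up to such a normalization, so one genuinely needs $M=\tilde M$ and not merely $\rho=\tilde\rho$. Here I would argue as follows. A direct computation from (\ref{syst1})--(\ref{syst3}) shows that $u=\kappa\tilde u$ solves $(\tau-z)u=0$ whenever $\tilde u$ solves $(\tilde\tau-z)\tilde u=0$, with quasi-derivatives related by $u^{[1]}=(p\kappa')\,\tilde u+\kappa^{-1}\tilde u^{[1]}$. Applying this to the fundamental system $\{\tilde c_0,\tilde s_0\}$, reading off the Cauchy data at $x=0$ from (\ref{FSS}), and invoking uniqueness of the initial value problem gives $\kappa(x)\,\tilde s_0(x,z)=\kappa(0)^{-1}s_0(x,z)$ and $\kappa(x)\,\tilde c_0(x,z)=\kappa(0)\,c_0(x,z)+p\kappa'(0)\,s_0(x,z)$. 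Evaluating these at $x=1$ and inserting them into the representation $\tilde M(z)=-\tilde c_0(1,z)/\tilde s_0(1,z)$ from (\ref{WT}) then yields
$$
  \tilde M(z)=\kappa(0)^2\,M(z)-\kappa(0)\,p\kappa'(0),
$$
valid away from the eigenvalues. Since $\tilde M=M$ and $M$ is non-constant — it is a Herglotz function whose poles are the infinitely many simple Dirichlet eigenvalues $\alpha_k\to+\infty$ of Lemma~\ref{2-Hadamard}, cf.\ (\ref{2-SpectralMeasure-M}) — this affine relation forces $\kappa(0)^2=1$ and then $\kappa(0)\,p\kappa'(0)=0$; as $\kappa>0$, we get $\kappa(0)=1$ and $p\kappa'(0)=0$. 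The unitarity of $V:L^2((0,1);\tilde p\,dx)\to L^2((0,1);p\,dx)$ and the intertwining $S=V\tilde SV^{-1}$ are inherited directly from Theorem~\ref{ISM}, which finishes the proof.
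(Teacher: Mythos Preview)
Your proof is correct and follows essentially the same approach as the paper: recover $\rho=\tilde\rho$ from $M=\tilde M$, apply Theorem~\ref{ISM}, use $p=r$ and $\tilde p=\tilde r$ to force $(\eta')^2=1$ and hence $\eta=\mathrm{id}$, then exploit the relations $\kappa\tilde s_0=\kappa(0)^{-1}s_0$ and $\kappa\tilde c_0=\kappa(0)c_0+(p\kappa')(0)s_0$ together with $M=\tilde M$ to pin down $\kappa(0)=1$, $(p\kappa')(0)=0$. The only cosmetic difference is in the last step: you derive the clean affine identity $\tilde M=\kappa(0)^2M-\kappa(0)(p\kappa')(0)$ and invoke non-constancy of $M$ via its poles, whereas the paper rewrites this as $(1-\kappa(0)^2)c_0(1,z)=\kappa(0)(p\kappa')(0)s_0(1,z)$ and argues that constancy of $M$ would force the spectral measure to vanish by~(\ref{mesure})--(\ref{Funitary}).
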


\begin{proof}
Assume that $M(z)= \tilde{M}(z) $  for all $z \in \C^+$. So, thanks to (\ref{mesure}), the spectral measures associated to $S$ and $\tilde{S}$ coincide, and we can use Theorem \ref{ISM}. Since $\eta$ is an absolutely continuous bijection, its derivative has a constant sign, which is positive thanks to Theorem \ref{ISM}. Then, we easily deduce that $\eta' =1$ almost everywhere, which implies $\eta(x)=x$ for all $x \in[0,1]$.
\vspace{0.1cm}

Now, let us prove that $\kappa(0)=1$, $p\kappa' (0)=0$. Recalling that  $S = V \tilde{S} V^{-1}$, we see that the functions $V \tilde{c}_0$ and $V \tilde{s}_0$ satisfy the equation $(\tau-z)u=0$. In particular, since $(c_0, s_0)$ is a (FSS), we have:
\begin{eqnarray*}
 \kappa \tilde{c}_0 &=& a c_0 + b s_0, \\
 \kappa \tilde{s}_0 &=& c c_0 + d s_0,
 \end{eqnarray*}
for some constants $a,b,c$ and $d$. Using (\ref{FSS}), we obtain immediately
\begin{eqnarray}
 \kappa \tilde{c}_0 &=& \kappa(0) c_0 + (p\kappa')(0) s_0, \label{link1} \\
 \kappa \tilde{s}_0 &=&  \frac{1}{\kappa(0)} s_0. \label{link10}
\end{eqnarray}
From the equality $M(z)=\tilde{M}(z)$, we deduce from (\ref{WT}):
\begin{equation}\label{intermediaire}
c_0(1,z) \ \tilde{s}_0(1,z) = \tilde{c}_0(1,z)\  s_0(1,z).
\end{equation}
So, multiplying (\ref{intermediaire}) by $\kappa(1)$, and using (\ref{link1}), (\ref{link10}) evaluated at $x=1$, as well as $s_0(1,z)\not=0$, (since $z \in \C^+$ does not belong to the spectrum of $S$), we have:
\begin{equation}\label{link2}
 (1-\kappa²(0)) c_0(1,z) = \kappa (0) (p\kappa')(0) s_0 (1,z).
\end{equation}
Now, if $\kappa(0) \not=1$, thanks to (\ref{WT}) again and (\ref{link2}), we should have:
\begin{equation}
M(z) = \frac{\kappa(0)}{\kappa^2(0)-1} (p\kappa')(0) \ ,\ {\rm{for \ all\ }} z \in \C^+.
\end{equation}
We deduce by (\ref{mesure}) that the spectral measure associated to $S$ is identically zero, which is impossible thanks to  (\ref{Funitary}).
We conclude that $\kappa(0)=1$, and thanks to (\ref{link2}), we have $(p\kappa')(0)=0$.
\end{proof}

\Section{The anisotropic Calder\'{o}n problem for doubly warped product metrics in the $L^\infty$ setting.} \label{3}

\subsection{The Dirichlet-to-Neumann map.}

Let us recall that we consider a Riemannian manifold $(M,g)$ with boundary, having the topology of a cylinder given by $M = [0,1] \times K_1 \times K_2$, where each factor $(K_j, g_j)$ is a closed $n_j$-dimensional smooth Riemannian manifold. We assume that the manifold $M$  is equipped with a Riemannian metric
\begin{equation} \label{Metric}
  g = h_1(x) dx^2 + h_1(x) g_1 + h_2(x) g_2,
\end{equation}
where $h_1, \ h_2$ are bounded measurable positive functions depending only on the variable $x \in [0,1]$. To simplify the notation, we set :
\begin{equation}\label{defh}
h= h_1^{n_1-1} h_2^{n_2}.
\end{equation}
The positive Laplace-Beltrami operator can be expressed in our coordinates system as:
\begin{equation}\label{laplacien}
  -\Delta_g = -\frac{1}{h_1 \sqrt{h}} \partial_x (\sqrt{h} \partial_x)  - \frac{1}{h_1} \Delta_{g_1} - \frac{1}{h_2} \Delta_{g_2},
\end{equation}
where $-\Delta_{g_1}$ and $-\Delta_{g_2}$ are the positive Laplace-Beltrami operators on $(K_1,g_1)$ and $(K_2,g_2)$ respectively.

% ---- hypotheses -------------------
\vspace{0.2cm}\noindent
We assume  there exist two constants $0 < c < C$ such that $c \leq h_j(x) \leq C$ almost everywhere in $[0,1]$, which ensures  that  $-\Delta_g$ is uniformly elliptic on $M$.

\vspace{0.2cm}
We look at the Dirichlet problem at a fixed frequency $\lambda$ on $M$ such that $\lambda\notin \{ \lambda_j\}_{j \geq 1}$ where $\{ \lambda_j\}_{j \geq 1}$ is the Dirichlet spectrum of the operator $-\Delta_g$:
\begin{equation} \label{Eq00}
  \left\{ \begin{array}{cc} -\Delta_g u = \lambda u, & \textrm{on} \ M, \\ u = \psi, & \textrm{on} \ \partial M. \end{array} \right.
\end{equation}
It is well known (see for instance \cite{GT}, Theorem 8.3) that, for any $\psi \in H^{1/2}(\partial M)$, there exists a unique weak solution $u \in H^1(M)$ of (\ref{Eq00}).
%\noindent
%In other words, thanks to (\ref{laplacien}), we look for the solutions $u$ of
%\begin{equation}\label{eqdir}
%Lu := - \left(\sqrt{h}u' \right)' - \sqrt{h}\Delta_{g_1}u - \frac{h_1 \sqrt{h}}{h_2} \Delta_{g_2}u - \lambda h_1 \sqrt{h} u = 0 \quad \textrm{on} \ M,
%\end{equation}
%where $'$ denotes the derivative with respect to $x$, together with the boundary condition
%\begin{equation} \label{BC1}
%  u = \psi \quad \textrm{on} \ \partial M.
%\end{equation}

\vspace{0.2cm}
Following the approach given in \cite{Sa}, and recalled in the Introduction, we can define the Dirichlet-to-Neumann map (DN map) in a weak sense. For $\phi \in H^{\half}(\partial M)$, let $v$ be any function in $H^1(M)$ such that $v_{|\partial M} = \phi$.
The DN map is then defined  as the operator $\Lambda_g$ from $H^{\half} (\partial M)$ to $H^{-\half}(\partial M)$ {\it{via the bilinear form}}
\begin{equation}\label{DNW}
\langle \Lambda_g(\lambda) \psi, \phi \rangle = \int_M \left( \langle du, dv \rangle_g - \lambda u v \right) \ dVol_g\,,
\end{equation}
where the Riemannian inner product  $\langle \omega, \eta \rangle_g$ of the $1-$forms $\omega,  \eta$, is defined  as:
$$
\langle \omega, \eta \rangle_g = g^{ij} \omega_i \eta_j,
$$
and where $dV = h_1^{\frac{n_1 +1}{2}} h_2^{\frac{n_2}{2}} \ dx\ dV_1 \ dV_2$ is the Riemannian volume form on the manifold $(M,g)$.

\vspace{0.2cm}\noindent
In our setting, with the obvious notation, we have:
\begin{equation}
\langle du, dv \rangle_g = \frac{1}{h_1}  \partial_x u \ \partial_x v + \frac{1}{h_1} \langle du, dv \rangle_{g_1} + \frac{1}{h_2} \langle du, dv \rangle_{g_2}.
\end{equation}
In particular, for any {\it{sufficiently regular}} functions $u,v$, and using the Green's formula on each closed smooth manifold $K_j$, we obtain immediately from (\ref{DNW}):
\begin{equation}\label{DNW1}
\langle \Lambda_g(\lambda) \psi, \phi \rangle = \int_M  \left( \frac{1}{h_1}  \partial_x u \ \partial_x v +
[ \frac{1}{h_1}  (-\Delta_{g_1}v) + \frac{1}{h_2} (-\Delta_{g_2}v) - \lambda v] \  u  \  \right) \ dVol_g .
\end{equation}

\vspace{0.5cm}
\noindent
Recall also that the boundary $\partial M$ of $M$ is disconnected and consists in the disjoint union of two copies of $K_1 \times K_2$, that is:
$$
  \partial M = \Gamma_0 \cup \Gamma_1, \quad \Gamma_0 = \{0\} \times K_1 \times K_2, \quad \Gamma_1 = \{1\} \times K_1 \times K_2.
$$
Then, we can decompose the Sobolev spaces $H^s(\partial M)$ as $H^s(\partial M) = H^s(\Gamma_0) \oplus H^s(\Gamma_1)$ for any $s \in \R$ and we use the vector notation
$$
 \varphi = \left( \begin{array}{c} \varphi^0 \\ \varphi^1 \end{array} \right),
$$
for all elements $\varphi$ of $H^s(\partial M) = H^s(\Gamma_0) \oplus H^s(\Gamma_1)$. It follows that the DN map, which is a linear operator from $H^{1/2}(\partial M)$ to $H^{-1/2}(\partial M)$, has the structure of an operator valued $2 \times 2$ matrix
$$
  \Lambda_g(\lambda) = \left( \begin{array}{cc} L(\lambda) & T_R(\lambda) \\ T_L(\lambda) & R(\lambda) \end{array} \right),
$$
where $L(\lambda), R(\lambda), T_R(\lambda), T_L(\lambda)$ are operators from $H^{1/2}(K_1 \times K_2)$ to $H^{-1/2}(K_1 \times K_2)$. Moreover, the components of this matrix are nothing but the partial DN map defined in the Introduction:
\begin{equation}\label{DNmap1}
  L(\lambda) = \Lambda_{g,\Gamma_0, \Gamma_0}(\lambda), \quad R(\lambda) = \Lambda_{g,\Gamma_1, \Gamma_1}(\lambda),
\end{equation}
\begin{equation}\label{DNmap2}	
	T_L(\lambda) = \Lambda_{g,\Gamma_0, \Gamma_1}(\lambda), \quad T_R(\lambda) = \Lambda_{g,\Gamma_1, \Gamma_0}(\lambda).
\end{equation}

\subsection{The separation of variables.}

\vspace{0.2cm}
The cylindrical metric structure of the Riemannian manifold $(M,g)$ is such that one can find a simple expression of the solution $u$ of the Dirichlet problem for the metric $g$. For $j=1,2$,
we introduce the Hilbert basis of real-valued harmonics of the Laplace-Beltrami operator $-\Delta_{g_j}$ on $L^2(K_j)$:
\begin{equation}\label{decomposition}
-\Delta_{g_1} \Phi_{m} = \m \Phi_{m} \ \ ,\ \ -\Delta_{g_2} \Psi_{n} = \n \Psi_{n}.
\end{equation}
The eigenvalues $\m$ and $\n$ are ordered (counting multiplicities) according to
$$
0=\mu_{0} \leq \mu_{1} \leq ... \leq \mu_{m} \leq \ ...  \ \ ,\ \  0=\nu_{0} \leq \nu_{1} \leq ... \leq \nu_{n} \leq \ ...
$$
We recall that the Weyl law implies the following asymptotics on the eigenvalues $\m$ and $\n$. For instance, we have:
\begin{equation} \label{weyl}
 \m \sim \frac{\sqrt{2\pi}}{(\omega_1 \ vol(K_1))^{\frac{2}{n_1}}} \  m^{\frac{2}{n_1}} \ ,\ m \rightarrow + \infty,
\end{equation}
where $\omega_1$ is the volume of the unit ball of $\R^{n_1}$ and a similar expression holds for $\n$.

\vspace{0.5cm}
Now, let us take advantage of this separation of variables to express the DN map differently. First, we write the boundary data $\psi = (\psi^0, \psi^1) \in H^{1/2}(\Gamma_0) \times H^{1/2}(\Gamma_1)$, using their Fourier series decomposition as
$$
 \psi^0 = \sum_{m,n \in \N} \psi^0_{mn} Y_{mn}, \quad \psi^1 = \sum_{m,n \in \N} \psi^1_{mn} Y_{mn},
$$
where we have set
$$
  Y_{mn} = \Phi_{m}\  \Psi_{n}.
$$
Thus, the unique solution $u$ of (\ref{Eq00}) can be sought in the form
$$
  u = \sum_{m,n \in \N} u_{mn}(x) \  Y_{mn}.
$$
Clearly, thanks to (\ref{laplacien}), for all $m,n \in \N$, the function $u_{mn}$ is the unique solution of a singular Sturm-Liouville equation  (w.r.t. $x$) on $[0,1]$ with boundary conditions, given by
\begin{equation} \label{Eq2}
  \left\{ \begin{array}{c} - \frac{1}{\sqrt{h}} \left( \sqrt{h}u_{mn}'\right)'  + (  \n \frac{h_1}{h_2}  - \lambda h_1 ) u_{mn} = -\m u_{mn}, \\
    u_{mn}(0) = \psi^0_{mn}, \quad u_{mn}(1) = \psi^1_{mn}. \end{array} \right.
\end{equation}

\vspace{0.2cm} \noindent
We emphasize that the equation (\ref{Eq2}) fits into the framework of Section 2 using the dictionary
\begin{equation}\label{dico}
p=r=\sqrt{h} \ ,\ q= q_{\lambda,n}= ( \n \frac{h_1}{h_2}  - \lambda h_1 ) \sqrt{h}\ ,\  z =-\m.
\end{equation}

 % ------- Diagonalisation de la DN map -------------

\vspace{0.2cm} \noindent
As in \cite{DKN2}, the DN map is now diagonalized on the Hilbert basis $\{ Y_{mn} \}_{m,n \in \N}$ and is shown to have a very simple expression on each harmonic.
Indeed, we choose  $\psi, \ \phi \in C^{\infty}(\partial M)$ in the form:
$$
\psi =  \left( \begin{array}{c} \psi^0_{mn} \\ \psi^1_{mn} \end{array} \right) \otimes Y_{mn} \ ,\
\phi =  \left( \begin{array}{c} \phi^0_{mn} \\ \phi^1_{mn} \end{array} \right) \otimes Y_{mn}.
$$
By construction, the solution $u_{mn}$ of (\ref{Eq00}) with the boundary condition $\psi$ satisfies also (\ref{Eq2}). Now, in (\ref{DNW1}), we take $v = v_{mn}(x) \   Y_{mn} \in C^{\infty}(M)$  such that $v_{mn}(0) = \phi^0_{mn}$ and $v_{mn}(1) = \phi^1_{mn}$. In other words, we have $v_{ |\partial M }= \phi$, and clearly
$$
-\Delta_{g_1} v = \m \ v_{mn}(x)\  Y_{mn} \ ,\ -\Delta_{g_2} v = \n \  v_{mn}(x)\  Y_{mn} \ ,\ \partial_x v= v_{mn}'(x)\  Y_{mn}.
$$
Substituting these expressions in (\ref{DNW1}), since the volume form  is given by 
$$
dV = h_1^{\frac{n_1 +1}{2}} h_2^{\frac{n_2}{2}} \ dx\ dV_1 \ dV_2\,,
$$ 
we obtain:
\begin{equation*}
\langle \Lambda_g(\lambda) \psi, \phi \rangle = \int_0^1  \left( \frac{1}{h_1}  u_{mn}' \ v_{mn}' +
[  \frac{\m}{h_1}  +  \frac{\n}{h_2}  - \lambda ] \  v_{mn} u_{mn} \  \right) \ h_1^{\frac{n_1 +1}{2}} h_2^{\frac{n_2}{2}} \ dx.
\end{equation*}
Recalling that we have set $h = h_1^{n_1-1} h_2^{n_2}$, we obtain:
\begin{equation*}
\langle \Lambda_g(\lambda) \psi, \phi \rangle = \int_0^1  \sqrt{h}   u_{mn}' \ v_{mn}' dx +
\int_0^1 [  \m +  \n \frac{h_1}{h_2}  - \lambda h_1 ] \ \sqrt{h}  \ v_{mn} u_{mn}  \ dx .
\end{equation*}
Now, using the fact that $\sqrt{h}u_{mn}'$ is absolutely continuous on $[0,1]$, we can integrate by parts the first of the above integrals:
\begin{equation}
\int_0^1  \sqrt{h}  u_{mn}' \ v_{mn}' dx = (\sqrt{h} u_{mn}')(1) \phi_{m}^1 - (\sqrt{h} u_{mn}')(0) \phi_{m}^0 -  \int_0^1  (\sqrt{h}u_{mn}')' v_{mn} \ dx.
\end{equation}
Thus, using (\ref{Eq2}), we have obtained:
\begin{equation} \label{DNW2}
\langle \Lambda_g(\lambda) \psi, \phi \rangle = (\sqrt{h} u_{mn}')(1) \phi_{m}^1 - (\sqrt{h} u_{mn}')(0) \phi_{m}^0
\end{equation}

\vspace{0.2cm}\noindent
In other words, if we denote
\begin{equation}\label{DNW3}
  \Lambda_g(\lambda)_{|<Y_{mn}>} = \Lambda^{mn}_g(\lambda) = \left( \begin{array}{cc} L^{mn}(\lambda) & T^{mn}_R(\lambda) \\ T^{mn}_L(\lambda) & R^{mn}(\lambda) \end{array} \right),
\end{equation}
the restriction of the global DN map to each harmonic $<Y_{mn}>$, we see that this operator has the structure of a $2 \times2$ matrix and satisfies for all $m,n \in \N$:
\begin{equation} \label{DN-Partiel-0}
  \Lambda_g^{mn}(\lambda) \left( \begin{array}{c} \psi^0_{mn}  \\ \psi^1_{mn}  \end{array} \right) \otimes Y_{mn} =
  \left( \begin{array}{c} - (\sqrt{h} u_{mn}')(0)  \\ \ \ (\sqrt{h} u_{mn}')(1)  \end{array} \right) \otimes Y_{mn}.
\end{equation}

\vspace{0.5cm}
\noindent
As in \cite{DKN2}, we can further simplify the partial DN maps $\Lambda^{mn}_g(\lambda)$ by interpreting their coefficients as the characteristic and Weyl-Titchmarsh functions of the ODE (\ref{Eq2}) with appropriate boundary conditions. We recall briefly the procedure. First fix $n \in \N$ and consider the ODE
\begin{equation} \label{Eq3}
  -\frac{1}{\sqrt{h}} \left( (\sqrt{h} v')' + q_{\lambda n}(x) v \right)  = z v, \quad \quad q_{\lambda n} = (\n \frac{h_1}{h_2} - \lambda  h_1 )\sqrt{h}, \quad z = -\m
\end{equation}
with boundary conditions
\begin{equation} \label{BC4-3D}
  v(0) = 0, \quad v(1) = 0.
\end{equation}

Note that the equation (\ref{Eq3}) is nothing but equation (\ref{Eq2}) in which the parameter $ -\m$ is written as $z$ and is interpreted as the spectral parameter of the equation, and where the boundary conditions  have been replaced by Dirichlet boundary conditions at $x = 0$ and $x=1$. Thanks to the results recalled in Section 2, we can define for all $n \in \N$ and all $z \in \C$ the fundamental systems of solutions

$$
  \{ c_0(x,z,\n), s_0(x,z,\n)\}, \quad \{ c_1(x,z,\n), s_1(x,z,\n)\},
$$
by imposing the Cauchy conditions
\begin{equation} \label{FSS1}
  \left\{ \begin{array}{cccc} c_0(0,z,\n) = 1, & c_0^{[1]}(0,z,\n) = 0, & s_0(0,z,\n) = 0, & s_0^{[1]}(0,z,\n) = 1, \\
 	                  c_1(1,z,\n) = 1, & c_1^{[1]}(1,z,\n) = 0, & s_1(1,z,\n) = 0, & s_1^{[1]}(1,z,\n) = 1. \end{array} \right.
\end{equation}

\vspace{0.5cm}
We now come back to the expression (\ref{DN-Partiel-0}) of the partial DN map $\Lambda^{m}_g(\lambda)$. For all $m,n \in \N$, we need to express $(\sqrt{h}u_{mn}')(0)$ and $(\sqrt{h}u_{mn}')(1)$ in terms of $\psi^0_{mn}$ and $\psi^1_{mn}$ in order to find the expressions of the coefficients $L^{mn}(\lambda), T^{mn}_R(\lambda), T^{mn}_L(\lambda), R^{mn}(\lambda)$. But the solution $u_{mn}$ of (\ref{Eq2}) can be written as linear combinations of the (FSS)
$$
  \{ c_0(x,z,\n), s_0(x,z,\n)\}, \quad \quad \{ c_1(x,z,\n), s_1(x,z ,\n)\}.
$$	
More precisely, we write
\begin{equation}\label{linear}
  u_{mn} = \alpha \,c_0  + \beta \,s_0 = \gamma \,c_1 + \delta \,s_1,
\end{equation}
for some constants $\alpha,\beta,\gamma,\delta$. Using (\ref{Eq2}) and (\ref{FSS1}), we first get
\begin{equation} \label{a1}
  \left( \begin{array}{c} u_{mn}(0) \\ u_{mn}(1) \end{array} \right) = \left( \begin{array}{c}  \psi^0_{mn} \\  \psi^1_{mn} \end{array} \right) = \left( \begin{array}{c} \alpha \\ \gamma \end{array} \right) = \left( \begin{array}{c} \gamma \,c_1(0,z,\n) + \delta \, s_1(0,z,\n),  \\ \alpha \,c_0(1,z,\n) + \beta \,s_0(1,z,\n) \end{array} \right).
\end{equation}
We deduce that
\begin{equation} \label{a2}
  \left( \begin{array}{c} \beta \\ \delta  \end{array} \right) = \left( \begin{array}{cc} -\frac{c_0(1,z,\n)}{s_0(1,z,\n)} & \frac{1}{s_0(1,z,\n)} \\ \frac{1}{s_1(0,z,\n)} & -\frac{c_1(0,z,\n)}{s_1(0,z,\n)} \end{array} \right) \left( \begin{array}{c} \psi^0_{mn} \\ \psi^1_{mn}  \end{array} \right).
\end{equation}
Calculating the quasi-derivatives in the equation (\ref{linear}), we obtain immediately:
\begin{equation} \label{calcul}
\beta = (\sqrt{h}u_{mn}')(0) \ ,\ \delta = (\sqrt{h}u_{mn}')(1).
\end{equation}
Then, it follows from (\ref{DN-Partiel-0}), (\ref{a2}) and (\ref{calcul}):
\begin{equation} \label{a3}
 \Lambda_g^{mn} (\lambda)  = \left( \begin{array}{cc} \frac{c_0(1,z,\n)}{s_0(1,z,\n)} & -\frac{1}{s_0(1,z,\n)} \\ \frac{1}{s_1(0,z,\n)} & -\frac{c_1(0,z,\n)}{s_1(0,\m,\n)} \end{array} \right)
\end{equation}

\vspace{0.2cm}\noindent
Finally, using the definition of the characteristic and the Weyl-Titchmarsh functions introduced in Section 2, we easily show the DN map on each harmonic can be written as:
\begin{equation} \label{DNpartial}
 \Lambda_g^{mn} (\lambda)  = \left( \begin{array}{cc} -M(\m,\n) & -\frac{1}{\Delta(\m,\n)} \\ -\frac{1}{\Delta(\m,\n)} & -N(\m,\n) \end{array} \right)
\end{equation}

\subsection{The complex angular momentum method.} \label{CAM}

In this section, we set up the complex angular momentum (CAM) method we have already introduced in \cite{DKN1, DKN2, DKN3, DKN4, DN3, DN4, DGN}. More precisely, we allow the angular momentum $z= -\m$ appearing in all the previous expressions of the DN map to be a complex number, and we shall take account this new amount of information to solve the Calder\'on problem.

\vspace{0.2cm}\noindent
So, let us consider two metrics $g$ and $\tilde{g}$ on $M=[0,1] \times K_1 \times K_2$ in the form
$$
g = h_1(x) dx^2 + h_1 (x) g_1 + h_2 (x) g_2\ ,\ \tilde{g} = \tilde{h}_1(x) dx^2 + \tilde{h}_1 (x) g_1 + \tilde{h}_2 (x) g_2 .
$$
As usually, we add  the subscript $\, \tilde{} \,$ to all the quantities referring to the metric $\tilde{g}$. If $\n$ is a fixed eigenvalue, we assume that
\begin{equation}\label{egalityM}
  M(\m, \n) = \tilde{M}(\m, \n), \quad \forall m \geq 0.
\end{equation}
Using (\ref{MD}), this implies
$$
  D(\m,\n)\tilde{\Delta}(\m,\n) - \tilde{D}(\m,\n) \Delta(\m,\n)= 0, \quad \forall m \geq 0.
$$
Now, let us introduce for each $n \geq 0$ the function
\begin{equation}\label{defF}
F(z) = D(-z^2,\n)\tilde{\Delta}(-z^2,\n) - \tilde{D}(-z^2,\n) \Delta(-z^2,\n).
\end{equation}
From the analytic properties of the (FSS) $\{c_0, s_0\}$ and $\{c_1, s_1\}$ established in Section 2, we see that $F$ is an entire function of exponential type (\textit{i.e.} $\forall z \in \C, \quad |F(z)| \leq C e^{A|z|}$ for some positive constants $A, C$). Moreover $F$ vanishes on the sequence $(\sqrt{\m})_{m \geq 0}$.

\vspace{0.2cm}\noindent
We aim to prove that $F$ vanishes identically on $\C$. To show this, we use the Duffin-Schaeffer Theorem (see \cite{Boa}, Thm 10.5.3):

\begin{thm} \label{Duffin}
Let $f$ be an entire function whose indicator function satisfies :
\begin{equation} \label{3-DSHyp}
  h_f(\theta) \leq a|\cos (\theta)| + b |\sin(\theta)|, \quad b < \pi.
\end{equation}
Assume that $(\lambda_k)_{k \geq 1}$ is an increasing sequence of real numbers satisfying
\begin{equation} \label{3-Separation}
  |\lambda_{k+1} - \lambda_k|\geq \delta > 0, \quad |\lambda_k - k| \leq L, \quad |f(\lambda_k)| \leq M.
\end{equation}
Then there exists $K$ such that
$$
  |f(x)| \leq K M, \quad \forall x \in \R.
$$
In particular, if $f(\lambda_k) = 0, \ \forall k \geq 1$, then $f(z) = 0, \ \forall z \in \C$.
\end{thm}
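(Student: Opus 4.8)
The statement is the theorem of Duffin and Schaeffer on the reconstruction of a function of exponential type from its values on a sufficiently dense real set, and the honest route in a paper of this kind is to quote it verbatim from \cite{Boa}, Theorem 10.5.3, whose hypotheses are literally (\ref{3-DSHyp})--(\ref{3-Separation}); the last assertion is then just the case $M=0$, since an entire function that vanishes on all of $\R$ vanishes identically. Let me nonetheless indicate how one proves it, since this is exactly the mechanism exploited below: the theorem will be applied to the function $F$ of (\ref{defF}), which is entire of exponential type, vanishes on the sequence $(\sqrt{\m})_{m\ge0}$, and, by Corollary \ref{2-EstRef}, satisfies $h_F(\theta)\le 2|\cos\theta|$, so that (\ref{3-DSHyp}) holds with $a=2$ and $b=0<\pi$.

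The plan is to represent $f$ by a Lagrange-type cardinal series over the nodes $(\lambda_k)$. Following Boas, one first constructs an entire function $L$ of exponential type vanishing exactly at the $\lambda_k$ (a ``generating function of the sequence'', essentially a regularised canonical product $\prod_k(1-z/\lambda_k)$) and establishes the estimates: (i) $h_L(\theta)=\pi|\sin\theta|$, together with the accompanying bound $|L(x+iy)|\le Ce^{\pi|y|}$ --- this is the indicator computation for a zero set of density one and uses $|\lambda_k-k|\le L$, in the spirit of Proposition \ref{2-Levin}; (ii) the uniform lower bounds $|L'(\lambda_k)|\ge c>0$ and $|L(x)|\ge c\,\mathrm{dist}\bigl(x,\{\lambda_k\}\bigr)$ for $x\in\R$, which is where the separation $|\lambda_{k+1}-\lambda_k|\ge\delta$ enters. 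Since $h_f(\theta)\le a|\cos\theta|+b|\sin\theta|$ while $h_L(\theta)=\pi|\sin\theta|$, the quotient $f/L$ has indicator $\le a|\cos\theta|+(b-\pi)|\sin\theta|$, strictly negative in the purely imaginary directions; a Phragm\'en--Lindel\"of argument (carried out in horizontal strips, where the $a|\cos\theta|$ term is harmless) then shows that $f/L$ is bounded on $\C$ and that the interpolation formula
$$
  f(z)=\sum_k f(\lambda_k)\,\frac{L(z)}{(z-\lambda_k)\,L'(\lambda_k)}
$$
is valid. Evaluating on $\R$, estimating each term by $|f(\lambda_k)|\le M$, $|L'(\lambda_k)|\ge c$ and the bound in (i), and controlling the resulting series by the standard device of pairing consecutive terms, one obtains $|f(x)|\le KM$ for all real $x$.

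I expect the main obstacle to be the block of uniform estimates on $L$ in step (ii): obtaining lower bounds on $|L'(\lambda_k)|$ and on $|L(x)|$ away from the nodes that are \emph{uniform in $k$} is the technical core, and it is precisely there that both quantitative hypotheses $|\lambda_{k+1}-\lambda_k|\ge\delta$ and $|\lambda_k-k|\le L$ are needed simultaneously. The second delicate point is the Phragm\'en--Lindel\"of step legitimising the cardinal series; it requires the \emph{strict} inequality $b<\pi$, and equality already fails, as $f(z)=\sin(\pi z)$ with $\lambda_k=k$ shows. Since all of this is done carefully in \cite{Boa}, in practice I would simply verify (\ref{3-DSHyp})--(\ref{3-Separation}) for the function $F$ of (\ref{defF}) and invoke the theorem as stated.
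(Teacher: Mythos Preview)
Your reading is correct: the paper does not prove this theorem at all but simply quotes it from \cite{Boa}, Theorem 10.5.3, exactly as you anticipate in your first sentence. Your sketch of the Duffin--Schaeffer mechanism (canonical product $L$, uniform two-sided estimates on $L$ and $L'(\lambda_k)$, Phragm\'en--Lindel\"of using $b<\pi$, cardinal series) is accurate and goes well beyond what the paper does; the paper's ``proof'' is precisely your last sentence, namely to verify the hypotheses for the particular function $F$ and invoke the result as a black box.
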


Note first that $F(z) = d(iz,\n) \tilde{\delta}(iz,\n) - \tilde{d}(iz,\n) \delta(iz,\n)$ where we have used the notations introduced in (\ref{2-Nota}). Using Corollary \ref{2-EstRef} and the usual properties of indicator functions (see \cite{Lev}, Lecture 8), we get immediately
\begin{equation} \label{3-IndicatorF}
  h_F(\theta) := \limsup_{r \to \infty} \frac{\log |F(re^{i\theta})|}{r} \leq  2|\cos \theta|.
\end{equation}
Hence the function $F$ satisfies the assumption (\ref{3-DSHyp}) of the Duffin-Schaeffer Theorem. However, the sequence $(\sqrt{\m})_{m \geq 0}$ need not in general satisfy the separation assumptions (\ref{3-Separation}). To overcome this difficulty, we proceed as follows.

We remark secondly that the Weyl law implies the following asymptotic on the eigenvalues $\sqrt{\m}$ (repeated according multiplicity):
\begin{equation} \label{3-Weyl}
  \sqrt{\m} =  C m^{\frac{1}{n_1}} + O(1),
\end{equation}
where $C$ denotes a suitable constant independent of $m$ (see for instance \cite{SaVa}). Let us then set $G(z) := F(C z)$. Then $G$ is still an entire function satisfying (\ref{3-DSHyp}) and it is clear that $G$ vanishes on the subsequence of $\frac{1}{C} \sqrt{\m}$ defined by $\eta_m := \frac{1}{C} \sqrt{\mu_{m^{n_1}}}$. Thanks to (\ref{3-Weyl}), this subsequence satisfies $\eta_m = m + O(1)$ and thus verifies the second assumption in (\ref{3-Separation}), but still not the first one.

The last step consists in introducing the function $L(z) := G(Nz)$ for some $N \in \N$. Then the function $L$ is once again an entire function satisfying (\ref{3-DSHyp}) and vanishes on the subsequence $\lambda_m = \frac{1}{N} \eta_{Nm}$ which satisfies $\lambda_m = m + \frac{O(1)}{N}$. We conclude that for $N$ large enough, the function $L$ and the sequence $\lambda_m$ satisfy the assumptions of the Duffin-Schaeffer Theorem. As a consequence, we get
$$
  L(z) = 0, \quad \forall z \in \C,
$$
or equivalently
$$
  F(z) = 0, \quad \forall z \in \C.
$$
In particular, using (\ref{WT}) and (\ref{defF}) again, we have proved under the assumption (\ref{egalityM}) that:
\begin{equation}\label{egalityMonC}
  M(z, \n) = \tilde{M}(z, \n), \quad \forall z \in \C.
\end{equation}

\subsection{Proof of Theorem \ref{MainThm}} \label{UniquenessCalderon}

As a by-product of (\ref{egalityMonC}) and Corollary \ref{PR} with $p=r =\sqrt{h}$ and $\tilde{p}=\tilde{r}=\sqrt{\tilde{h}}$, we obtain the following result:

\begin{coro}
For a fixed eigenvalue $\n$, assume that the Weyl-Titchmarsh functions $M(. , \n)$ and $\tilde{M}(., \n)$  satisfy:
\begin{equation}\label{egalityM1}
  M(\m, \n) = \tilde{M}(\m, \n), \quad \forall m \geq 0.
\end{equation}
Then there is an absolutely continuous positive function $\kappa$ on $[0,1]$ such that $\sqrt{h} \kappa'$ is absolutely contiuous on $[0,1]$, with $\kappa(0)=1, (\sqrt{h}\kappa') (0)=1$, and
\begin{eqnarray}
 \tilde{h}  &=& \kappa^4 h, \label{M1-1}\\
 \tilde{q}_{\lambda,n} &=& \kappa^2 q_{\lambda, n} - \kappa (\sqrt{h}\kappa')'. \label{M1-2}
\end{eqnarray}
\end{coro}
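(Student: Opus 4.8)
The plan is to obtain the statement as an essentially immediate consequence of the complex angular momentum argument of Section \ref{CAM} and of Corollary \ref{PR}, the only preliminary verification being that the Sturm--Liouville data attached to the two warped product metrics satisfy Hypothesis \ref{H}. I first note that the hypothesis \ref{egalityM1} of the corollary is \emph{verbatim} the hypothesis \ref{egalityM} under which Section \ref{CAM} was carried out for the fixed eigenvalue $\n$. Running that argument --- forming the entire function $F$ of \ref{defF}, bounding its indicator function by $h_F(\theta) \leq 2|\cos\theta|$ by means of Corollary \ref{2-EstRef}, and then annihilating $F$ with the Duffin--Schaeffer Theorem \ref{Duffin} after the successive rescalings $z \mapsto Cz$ and $z \mapsto Nz$ dictated by the Weyl asymptotics \ref{3-Weyl} --- yields \ref{egalityMonC}, that is $M(z,\n) = \tilde{M}(z,\n)$ for every $z \in \C$, and in particular for every $z \in \C^+$.

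I would then cast equation \ref{Eq3} and its counterpart for $\tilde g$ as Sturm--Liouville problems in the sense of Section \ref{2}. By the dictionary \ref{dico}, the equation for $g$ has coefficients $p = r = \sqrt{h}$ and $q = q_{\lambda,n} = (\n\,\frac{h_1}{h_2} - \lambda h_1)\sqrt{h}$, with spectral parameter $z = -\m$, and similarly $\tilde p = \tilde r = \sqrt{\tilde h}$, $\tilde q = \tilde q_{\lambda,n}$. Since we are in the uniformly elliptic regime $c \leq h_j(x) \leq C$ a.e., the function $h = h_1^{n_1-1} h_2^{n_2}$ is bounded above and below by positive constants; hence $p = r = \sqrt{h}$ and $\frac{1}{p}$ belong to $L^\infty([0,1]) \subset L^1([0,1],dx)$ with $r > 0$ a.e., and $q_{\lambda,n} \in L^\infty([0,1]) \subset L^1([0,1],dx)$, all three functions being real-valued and measurable. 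Thus Hypothesis \ref{H} holds for $g$, and by the same token for $\tilde g$. As moreover $p = r$ and $\tilde p = \tilde r$, all the assumptions of Corollary \ref{PR} are in force, the required equality $M(z) = \tilde{M}(z)$ on $\C^+$ being precisely \ref{egalityMonC}.

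Finally I would read off the conclusion. Corollary \ref{PR} produces an absolutely continuous positive function $\kappa$, with $\sqrt{h}\,\kappa'$ absolutely continuous on $[0,1]$ and the endpoint normalization at $x = 0$ inherited from that corollary, such that $\sqrt{\tilde h} = \kappa^2 \sqrt{h}$ and $\tilde q_{\lambda,n} = \kappa^2 q_{\lambda,n} - \kappa(\sqrt{h}\,\kappa')'$. Squaring the first identity gives $\tilde h = \kappa^4 h$, which is \ref{M1-1}, and the second identity is exactly \ref{M1-2}; positivity of $\kappa$ ensures $\tilde h > 0$ a.e., so that $\tilde g$ is again an admissible metric of the form \ref{Metric}. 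I do not expect a genuine obstacle within this corollary: the deep inputs --- the inverse spectral uniqueness of Theorem \ref{ISM}, resting on the de Branges / bounded-type analysis of \cite{EGNT2}, together with the Duffin--Schaeffer step that converts the discrete data $\{M(\m,\n)\}_{m \geq 0}$ into the full Herglotz function $M(\cdot,\n)$ --- are already in place, and the only point requiring attention is the verification of Hypothesis \ref{H}, which is immediate from uniform ellipticity. One should merely keep in mind that the function $\kappa$ delivered here depends a priori on the fixed index $n$, a fact that will be reconciled later when the full metric is recovered.
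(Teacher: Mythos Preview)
Your proposal is correct and follows exactly the paper's approach: the paper states the corollary ``as a by-product of (\ref{egalityMonC}) and Corollary \ref{PR} with $p=r=\sqrt{h}$ and $\tilde p=\tilde r=\sqrt{\tilde h}$'', and you have simply unpacked this by rehearsing the CAM argument, explicitly verifying Hypothesis \ref{H} from uniform ellipticity, and spelling out the squaring step $\sqrt{\tilde h}=\kappa^2\sqrt h \Rightarrow \tilde h=\kappa^4 h$. Note in passing that the boundary value $(\sqrt{h}\kappa')(0)=1$ in the corollary's statement is a typo in the paper (it should be $0$, as delivered by Corollary \ref{PR} and as used correctly in Corollary \ref{twomodes}); your phrasing ``endpoint normalization inherited from that corollary'' sidesteps this harmlessly.
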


\vspace{0.5cm}\noindent
Now,  recalling that the potentials $q_{\lambda,n}$ and $\tilde{q}_{\lambda,n}$ are given by (\ref{dico}), we can write  (\ref{M1-2}) as:
\begin{equation} \label{equalityq}
\left( \n \frac{\tilde{h}_1}{\tilde{h}_2} - \lambda \tilde{h}_1 \right) \sqrt{\tilde{h}} = \kappa^2 \left( \n \frac{h_1}{h_2} - \lambda h_1 \right) \sqrt{h}
- \kappa (\sqrt{h}\kappa')'.
\end{equation}

\vspace{0.2cm} \noindent
In particular, if we assume that (\ref{equalityq}) holds for two different eigenvalues  $\n$, we deduce:
\begin{eqnarray}
\frac{\tilde{h}_1 \sqrt{\tilde{h}}} {\tilde{h}_2} &=& \kappa^2 \ \frac{h_1 \sqrt{h}}{h_2}, \label{equalityh1} \\
\lambda \tilde{h}_1 \sqrt{\tilde{h}} &=& \lambda \kappa^2 h_1 \sqrt{h} + \kappa (\sqrt{h}\kappa')'. \label{equalityh2}
\end{eqnarray}
Thus, (\ref{M1-1}) and (\ref{equalityh1}) imply:
\begin{equation}\label{relation1}
\frac{\tilde{h}_1} {h_1} = \frac{\tilde{h}_2}{h_2}.
\end{equation}
So, using (\ref{defh}) and (\ref{M1-1}) again,  we obtain:
\begin{eqnarray}
\tilde{h}_1 &=& \kappa^{\frac{4}{n_1+n_2-1}} \ h_1, \label{conformalfactor}\\
\tilde{h}_2 &=& \kappa^{\frac{4}{n_1+n_2-1}} \ h_2.
\end{eqnarray}
In the same way, thanks to (\ref{M1-1}), (\ref{equalityh2}) and (\ref{conformalfactor}), we see that $\kappa$ satisfies:
\begin{equation}
(\sqrt{h} \kappa)' - \lambda \   ( \kappa^{\frac{4}{n_1+n_2-1}+1} -\kappa) \sqrt{h} h_1 =0.
\end{equation}

\vspace{0.2cm}\noindent
So, at this stage, we have proved the following result :

\vspace{0.2cm}

\begin{coro} \label{twomodes}
Assume that for two different eigenvalues $\n$, we have:
\begin{equation}
  M(\m, \n) = \tilde{M}(\m, \n), \quad \forall m \geq 0.
\end{equation}
Then, there exists an absolutely continuous positive conformal factor $\kappa(x)$ on $[0,1]$ such that:
\begin{equation}\label{conform}
\tilde{g} = \kappa^{\frac{4}{n_1+n_2-1}} (x)  \ g.
\end{equation}
Moreover, $ \sqrt{h} \kappa'$ is absolutely continuous on $[0,1]$ and satisfies the non-linear ordinary differential equation:
\begin{equation}\label{nonlinearEDO}
-(\sqrt{h} \kappa')' + \lambda \ ( \kappa^{\frac{4}{n_1+n_2-1}+1}-\kappa)\sqrt{h}  h_1  = 0  \ \ \ {{almost\ everywhere \ on}} \ [0,1],
\end{equation}
with the boundary conditions $\kappa(0)=1$, $(\sqrt{h}\kappa')(0)=0$.
\end{coro}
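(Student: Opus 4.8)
The plan is to exploit a feature of the corollary preceding this statement: the conformal factor $\kappa$ in the relation $\tilde h=\kappa^4 h$ does not depend on the angular mode $\n$. Hence, invoking that corollary for \emph{two} distinct eigenvalues of $-\Delta_{g_2}$ forces a single common $\kappa$, which turns the remaining identity (\ref{M1-2}) into enough equations to pin down both warping factors and to reduce everything to a scalar ODE for $\kappa$.

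First I would fix two distinct eigenvalues $\n$ and $\nu_{n'}$ of $-\Delta_{g_2}$ (available whenever $n_2\geq 1$), and apply the corollary above to each of them. This yields two absolutely continuous positive functions $\kappa^{(1)},\kappa^{(2)}$ on $[0,1]$, with $\sqrt h\,(\kappa^{(i)})'$ absolutely continuous and with the boundary normalizations provided by that corollary, such that $\tilde h=(\kappa^{(1)})^4 h=(\kappa^{(2)})^4 h$. Since $h>0$ a.e. and the $\kappa^{(i)}$ are positive, $\kappa^{(1)}=\kappa^{(2)}$ pointwise; write $\kappa$ for this common function. It then satisfies (\ref{M1-1}), hence also $\sqrt{\tilde h}=\kappa^2\sqrt h$ by positivity, together with (\ref{M1-2}) for both $\n$ and $\nu_{n'}$.

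Writing (\ref{M1-2}) in the form (\ref{equalityq}) for the two eigenvalues and subtracting, the $\lambda$-terms and the $\kappa(\sqrt h\kappa')'$ term cancel, leaving
\[
(\n-\nu_{n'})\,\frac{\tilde h_1\sqrt{\tilde h}}{\tilde h_2}=(\n-\nu_{n'})\,\kappa^2\,\frac{h_1\sqrt h}{h_2},
\]
and dividing by $\n-\nu_{n'}\neq 0$ gives (\ref{equalityh1}); reinserting (\ref{equalityh1}) into (\ref{equalityq}) for one eigenvalue produces (\ref{equalityh2}). Substituting $\sqrt{\tilde h}=\kappa^2\sqrt h$ into (\ref{equalityh1}) gives $\tilde h_1/\tilde h_2=h_1/h_2$, so that $\rho:=\tilde h_1/h_1=\tilde h_2/h_2$ is a well-defined positive function; from $h=h_1^{n_1-1}h_2^{n_2}$ and $\tilde h=\tilde h_1^{n_1-1}\tilde h_2^{n_2}$ one gets $\tilde h=\rho^{\,n_1+n_2-1}h$, and comparing with $\tilde h=\kappa^4 h$ yields $\rho=\kappa^{\frac{4}{n_1+n_2-1}}$, i.e. (\ref{conformalfactor}) together with $\tilde h_2=\kappa^{\frac{4}{n_1+n_2-1}}h_2$, which is the conformal relation (\ref{conform}). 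Finally, plugging $\tilde h_1=\kappa^{\frac{4}{n_1+n_2-1}}h_1$ and $\sqrt{\tilde h}=\kappa^2\sqrt h$ into (\ref{equalityh2}) and dividing by $\kappa>0$ gives $\lambda\bigl(\kappa^{\frac{4}{n_1+n_2-1}+1}-\kappa\bigr)\sqrt h\,h_1=(\sqrt h\kappa')'$, i.e. (\ref{nonlinearEDO}); the boundary conditions $\kappa(0)=1$, $(\sqrt h\kappa')(0)=0$ are inherited from the preceding corollary.

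The argument is computationally light, and there is no serious analytic obstacle. Its real content — the only step that is not pure bookkeeping — is the observation that the factor $\kappa$ in (\ref{M1-1}) is independent of $\n$, which is exactly what allows two angular modes to collapse to a single conformal factor and then forces the splitting of $h$ into $h_1$ and $h_2$ and the reduction to the scalar equation (\ref{nonlinearEDO}). The remaining care is elementary: tracking the positivity of $\kappa,h,\tilde h$ when taking square roots and the fractional power $\tfrac{4}{n_1+n_2-1}$, and recording that two distinct eigenvalues of $-\Delta_{g_2}$ are genuinely available as soon as $n_2\geq 1$.
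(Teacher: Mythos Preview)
Your proof is correct and follows essentially the same route as the paper: apply the preceding corollary for two distinct eigenvalues $\nu_n$, subtract the two instances of (\ref{equalityq}) to obtain (\ref{equalityh1}) and (\ref{equalityh2}), then unwind these with $\tilde h=\kappa^4 h$ to get the conformal relation and the ODE. The one point you make explicit that the paper leaves implicit is why the two applications of the preceding corollary yield the \emph{same} $\kappa$: since $\tilde h=\kappa^4 h$ with $h,\tilde h$ independent of $\nu_n$ and $\kappa>0$, the factor $\kappa=(\tilde h/h)^{1/4}$ is forced and hence mode-independent --- this is a helpful clarification rather than a different argument.
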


\vspace{0.2cm}
\begin{rem}
In \cite{DKN2}, we have studied a similar problem in the case of manifolds corresponding to toric cylinders, but for {\it{smooth}} Riemannian metrics.
We remark that the non-linear ODE (\ref{nonlinearEDO}) is  nothing but the ODE appearing in Lemma 4.1, \cite{DKN2}, stated now in the non-regular case, as well as the Yamabe equation (\ref{Yamabe}) recalled in the Introduction.
\end{rem}

\vspace{0.3cm}\noindent
The non-linear ordinary differential equation (\ref{nonlinearEDO}) enters into the framework of Carath\'eodory's type ODEs which we recall here~\cite{Wa}. Setting
$\nu= \sqrt{h} \kappa'$ and $Y = (\kappa, \nu)$, we can write (\ref{nonlinearEDO}) as $Y' = f(x, Y)$ with:
\begin{eqnarray*}
f : D= [0,1] \times [0,+\infty[ \times \R &\rightarrow& \R^2 \\
        (x,Y)\ \ \ \ \  &\rightarrow& \left( \frac{1}{\sqrt{h}}\  \nu \ , \  \lambda \ ( \kappa^{\frac{4}{n_1+n_1-1}+1}-\kappa)\sqrt{h}  h_1   \right).
\end{eqnarray*}
The function $f(x,Y)$ is required to satisfy Carath\'eodory's conditions  in $D$: for a fixed $x$, $f(x,Y)$ is continuous as a function of $Y$, and measurable as a function of $x$ for a fixed $Y$. Moreover, for every  compact set $K \subset D$, we have, for all $(x,Y) \in K$,
\begin{equation}
||f(x,Y)|| \leq  k(x),
\end{equation}
with $k(x) \in L^1 ([0,1])$. Finally, $f$ satisfies a generalized Lipschitz condition: for all $(x,Y) \in K$,
\begin{equation}
||f(x,Y) - f(x,Z) || \leq l(x) \ || Y-Z||,
\end{equation}
where $l(x) \in L^1 ([0,1])$. So using (Theorem XX, p. 122, \cite{Wa}), we see that (\ref{nonlinearEDO}) has a unique solution in $D$ which is obviously $(\kappa, \nu) = (1, 0)$.

\vspace{0.2cm}
As a conclusion, under the hypotheses of Corollary \ref{twomodes}, we have obtain that $\tilde{g} = g$. Then, Theorem \ref{MainThm} follows easily from (\ref{DNmap1}), (\ref{DNW3}) and  (\ref{DNpartial}).

\begin{rem} \label{Extension-MainThm}
Let $h_0, h_1, h_2$ be measurable functions on $[0,1]$ such that
\begin{enumerate}
\item	$h_j > 0, \ j=0,1,2$ a.e. $x \in [0,1]$,
\item there exist two constants $c, C >0$ such that $c \leq h_1(x), \ h_2(x) \leq C$ a.e. in $[0,1]$,
\item $\sqrt{h_0} \in L^1(0,1)$.
\end{enumerate}
Consider on $M = [0,1] \times K_1 \times K_2$ the class of Riemannian metrics
\begin{equation} \label{ExtendedMetric}
  g = h_0(x) dx^2 + h_1(x) g_1 + h_2(x) g_2.  		
\end{equation}

Let $y = \phi(x) := \int_0^x \sqrt{\frac{h_0}{h_1}} ds$. Under our assumptions above, it is clear that $\phi$ is a bijection from $[0,1]$ to $[0,A]$ with $A = \int_0^1 \sqrt{\frac{h_0}{h_1}} ds$, which is an increasing function, such that $\phi$ and $\phi^{-1}$ are absolutely continuous on $[0,1]$ and $[0,A]$ respectively. In this new coordinate system, the manifold is described as the cylinder $\hat{M} = [0,A] \times K_1 \times K_2$ and the metric $g$ takes the form
\begin{equation} \label{OldMetric}
  \hat{g} = (\phi^{-1})^* g = H_1(y) dy^2 + H_1(y) g_1 + H_2(y) g_2,  		
\end{equation}
with $H_j(y) = h_j(x(y)), \ j=1,2$. Since absolutely continuous functions of one variable are admissible transformations for the change of variables in Lebesgue integration (see \cite{Ru}, Theorem 7.26) and since $\phi$ clearly preserves the boundary of $(M,g)$, we easily see that if $u$ is a weak solution of the Dirichlet problem (\ref{Eq00}) on $(M,g)$, then $\tilde{u} = u \circ \phi$ is a solution of the same Dirichlet problem (\ref{Eq00}) for $(\hat{M}, \hat{g})$. From this and the calculations made for the DN maps at the beginning of this section, we deduce easily that
\begin{equation} \label{DN=}
  \Lambda_{(\phi^{-1})^* g}(\lambda) = \Lambda_g(\lambda).
\end{equation}
In other words, the DN map on $(M,g)$ admits the same kind of gauge invariance as in the regular case. Using a slight generalization of Theorem \ref{MainThm} and (\ref{DN=}), we thus have proved

\begin{thm} \label{MainThm-Extended1}
Let $g$ and $\tilde{g}$ be metrics in the class (\ref{ExtendedMetric}) with $h_0, h_1, h_2$ satisfying the above assumptions. Assume that $\Lambda_g(\lambda) = \Lambda_{\tilde{g}}(\lambda)$ for an admissible fixed energy $\lambda$. Then there exists an increasing bijection $\psi: \ [0,1] \longrightarrow [0,1]$ with $\psi$ and $\psi^{-1}$ absolutely continuous on $[0,1]$ such that
$$
  \tilde{g} = \psi^* g.
$$
\end{thm}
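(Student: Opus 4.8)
The plan is to reduce Theorem \ref{MainThm-Extended1} to Theorem \ref{MainThm} via the explicit change of variables $y = \phi(x) = \int_0^x \sqrt{h_0/h_1}\,ds$. First I would verify that $\phi$ is an increasing absolutely continuous bijection from $[0,1]$ onto $[0,A]$ with absolutely continuous inverse: this follows from assumptions (1)--(3), since $\sqrt{h_0/h_1} \ge c^{-1/2}\sqrt{h_0} \in L^1$ gives integrability, the lower bound $h_0>0$ a.e.\ gives strict monotonicity (hence injectivity of $\phi$), and $\phi^{-1}$ is absolutely continuous because its derivative $\sqrt{h_1/h_0}$ is positive a.e.\ (being the reciprocal of an a.e.-positive $L^1$ function on a bounded interval, $\phi^{-1}$ is AC by the standard criterion). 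Then I would push the metric forward: a direct computation shows $(\phi^{-1})^*g = H_1(y)\,dy^2 + H_1(y)g_1 + H_2(y)g_2$ with $H_j(y) = h_j(\phi^{-1}(y))$, because $h_0\,dx^2 = h_0 \cdot (dx/dy)^2\,dy^2 = h_0 \cdot (h_1/h_0)\,dy^2 = h_1(x(y))\,dy^2 = H_1(y)\,dy^2$. Note $H_1, H_2$ still satisfy the bounds $c \le H_j \le C$, so $(\hat M,\hat g)$ is of the form \eqref{Cylinder0}--\eqref{Metric0} and Theorem \ref{MainThm} applies to it (after the trivial rescaling of $[0,A]$ to $[0,1]$, or by extending the statement of Theorem \ref{MainThm} to arbitrary cylinder length, which changes nothing in the separation-of-variables argument).

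Next I would establish the gauge invariance \eqref{DN=}. The key point is that the weak formulation of the Dirichlet problem is invariant under AC changes of the radial variable. Concretely, if $u \in H^1(M)$ is the weak solution of \eqref{Eq00} on $(M,g)$ and $\hat u = u \circ \phi^{-1}$, I would check that $\hat u$ is the weak solution of the corresponding Dirichlet problem on $(\hat M, \hat g)$: this amounts to the change-of-variables formula in the bilinear form \eqref{Weak-DirichletPb}, which is legitimate because absolutely continuous functions of one variable are admissible substitutions in Lebesgue integration (\cite{Ru}, Theorem 7.26) and because $\langle du,dv\rangle_g\,dVol_g$ transforms correctly: the $dx^2$-component of $g$ and the volume element conspire so that $\frac{1}{h_0}|\partial_x u|^2 \sqrt{h_0 h_1^{n_1-1}h_2^{n_2}}\,dx$ becomes $\frac{1}{H_1}|\partial_y \hat u|^2\sqrt{H_1^{n_1}H_2^{n_2}}\,dy$, and similarly for the fiber terms. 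Since $\phi$ fixes the boundary pointwise ($\phi(0)=0$, $\phi(1)=1$ in the rescaled picture), the Dirichlet data and the normal derivatives match, giving $\Lambda_{(\phi^{-1})^*g}(\lambda) = \Lambda_g(\lambda)$ as operators on $H^{1/2}(\partial M)$, and likewise for each partial DN map at a single end.

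Finally I would assemble the proof: given $\Lambda_g(\lambda) = \Lambda_{\tilde g}(\lambda)$ (or just $\Lambda_{g,\Gamma_j,\Gamma_j}(\lambda) = \Lambda_{\tilde g,\Gamma_j,\Gamma_j}(\lambda)$ for one $j$), let $\phi$, $\tilde\phi$ be the respective changes of variable, with images $[0,A]$, $[0,\tilde A]$. By \eqref{DN=}, $\Lambda_{\hat g}(\lambda) = \Lambda_{\hat{\tilde g}}(\lambda)$. Matching the Weyl law first forces $A = \tilde A$ (the Weyl asymptotics of the Dirichlet eigenvalues of the $x$-ODE, or simply of the full Laplacian restricted to a harmonic, determine the length of the interval), so both reduced metrics live on the same cylinder $[0,A]\times K_1\times K_2$; then Theorem \ref{MainThm} (in its length-$A$ version) gives $\hat{\tilde g} = \hat g$, i.e.\ $(\tilde\phi^{-1})^*\tilde g = (\phi^{-1})^* g$. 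Setting $\psi = \phi^{-1}\circ\tilde\phi$, which is an increasing AC bijection of $[0,1]$ with AC inverse (composition of such), this rearranges to $\tilde g = \psi^* g$, as claimed. The main obstacle I anticipate is the careful justification of the change of variables in the \emph{weak} formulation --- one must ensure that $\hat u = u\circ\phi^{-1}$ genuinely lies in $H^1(\hat M)$ and is a weak (not merely formal) solution, which requires that the chain rule holds for Sobolev functions composed with AC maps whose inverse is also AC; this is where the integrability hypotheses $\sqrt{h_0}\in L^1$ and the uniform ellipticity $c\le h_1,h_2\le C$ are genuinely used, since without them $\phi$ or $\phi^{-1}$ could fail to be AC and the whole reduction would break down.
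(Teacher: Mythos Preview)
Your proposal is correct and follows essentially the same route as the paper: reduce the extended class \eqref{ExtendedMetric} to the normalized form \eqref{Metric0} via the absolutely continuous change of variable $y=\phi(x)=\int_0^x\sqrt{h_0/h_1}\,ds$, invoke the gauge invariance \eqref{DN=} of the DN map under such AC reparametrizations, and then apply Theorem~\ref{MainThm} to the two reduced metrics to conclude $\tilde g=\psi^* g$ with $\psi=\phi^{-1}\circ\tilde\phi$. You in fact supply a detail the paper leaves implicit, namely that the cylinder lengths $A$ and $\tilde A$ must coincide before Theorem~\ref{MainThm} can be invoked; your Weyl-law argument for this is the natural one (the poles of the partial WT function determine the Dirichlet spectrum of the radial operator, whose leading asymptotics recover the interval length).
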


\end{rem}

% ----------------------------------------------------------

%%%%%%%%%%%%%%%%%%%%%%%%%%%%%%% UNIQUENESS AND INVISIBILITY %%%%%%%%%%%%%%%%%%%%%%%%%%%%%%%%%%

\Section{Uniqueness and invisibility for warped product metrics.} \label{4}

In this section, we re-examine the previous anisotropic Calder\'on problem on a class of Riemannian warped product metrics which are slightly more singular than the ones considered in the previous section. Precisely, we now consider cylinders $M = [0,1] \times K$ equipped with a simpler class of Riemannian metrics given by
\begin{equation} \label{A-Metric}
  g = h_1(x) [ dx^2 + g_K],
\end{equation}
where $h_1 = h_1(x)$ is a measurable positive function a.e. on $[0,1]$ and $(K, g_K)$ is a closed $n$-dimensional Riemannian manifold. We assume in the following that $n \geq 2$. As before, set $h(x) = h_1^{n - 1}(x)$. Roughly speaking, we shall show that there is uniqueness in the anisotropic Calder\'on problem at zero frequency provided the following conditions hold
\begin{equation} \label{A-CondMetric1}
  \sqrt{h} \in L^1(0,1), \quad \frac{1}{\sqrt{h}} \in L^1(0,1).
\end{equation}	
which amounts to the conditions (\ref{MainCond}) given in the Introduction. Let us emphasize that we do not require the metric $g$ to be of uniformly elliptic signature here. The conditions (\ref{A-CondMetric1}) seem to be the minimal conditions under which the Dirichlet problem is \emph{uniquely} solvable, and for boundary data in $H^2(\partial M)$, the unique solution can be shown to belong to the homogeneous Sobolev space $\dot{H}^1(M)$. If we had assumed the extra condition
\begin{equation} \label{A-CondMetric2}
  h_1 \sqrt{h} \in L^1(0,1),
\end{equation}
then the same unique solution of the Dirichlet problem for boundary data in $H^2(\partial M)$ could be shown to be in $H^1(M)$.

Additionally we shall exhibit counter-examples to uniqueness if one of the conditions (\ref{A-CondMetric1}) is not satisfied which shows the sharpness of our results. These counter-examples lead in fact to regions that are invisible to boundary measurements.

%%% UNIQUENESS IN L^p, p = (n_1 + n_2 pm 1)/2

\subsection{More singular metrics and uniqueness}

Let us assume the conditions (\ref{A-CondMetric1}) holds. Our first task is to define in a rigorous way the DN map at zero frequency in that case. We shall first prove that the Dirichlet problem
\begin{equation} \label{A-DirichletPb}
  \left\{ \begin{array}{rcl} -\triangle_g u & = & 0, \ \textrm{on} \ M, \\
	                                        u & = & \psi, \ \textrm{on} \ \partial M,
	\end{array}\right. 																				
\end{equation}
possesses a unique solution $u \in \dot{H}^1(M)$ for any boundary data $\psi \in H^2(\partial M)$.

For this we can use the separation of variables described in the previous section and look for distributional solutions $u$ in the form
$$
  u = \sum_{m = 0}^\infty u_{mn}(x) Y_{m},
$$
where $Y_m$ are the normalized eigenfunctions of the Laplace-Beltrami operator on $(K,g_K)$ associated to its eigenvalues $(\m)$, \textit{i.e.}
$$
  -\triangle_{g_K} Y_m = \m Y_m, \quad \forall m \geq 0,
$$
and
$$
  0 = \mu_0 < \mu_1 \leq \mu_2 \leq \dots \leq \m \to \infty.
$$
Here the functions $u_{mn}$ are solutions of the boundary value problem
\begin{equation} \label{A-RadialODE}
  \left\{ \begin{array}{rcl}
  -\frac{1}{\sqrt{h}} \left( \sqrt{h} u_{m}' \right)' & = & z_{m} u_{m}, \\
	u_{m}(0) = \psi^0_{m}, & & u_{m}(1) = \psi^1_{m},
	\end{array} \right.
\end{equation}
where $h(x) = h_1^{n-1}$, $z_{m} = -\mu_m$ and the boundary data $\psi = (\psi^0, \psi^1)$ are represented by their Fourier series expansions $\psi^j = \ds\sum_{m=0}^\infty \psi^j_{m} Y_{m}$.

For the boundary data $\psi= (\psi^0, \psi^1)$, we use the classical Sobolev space of order $s \in \R$ defined by
$$
  H^s(\partial M) = \left\{ \psi = \sum_{m=0}^\infty \psi_{m} Y_{m} \suchthat \sum_{m=0}^\infty (1 + \m)^s \,(|\psi^0_{m}|^2 + |\psi^1_{m}|^2) < \infty \right\}.
$$
Recall also that the homogeneous Sobolev space $\dot{H}^1(M)$ is defined by
$$
  \dot{H}^1(M) = \left\{ u \suchthat  \int_M |\nabla u|^2_g  \, dVol_g < \infty \right\}.
$$
Using separation of variables, we have the following characterization
\begin{equation} \label{A-Hdot1}
  \dot{H}^1(M) = \left\{ u = \sum_{m} u_{m}(x) Y_{m} \suchthat \sum_{m=0}^\infty \int_0^1 (|u'_{m}|^2 + \m |u_{m}|^2) \sqrt{h}(x) \, dx < \infty \right\}.
\end{equation}

Let us prove

\begin{thm} \label{A-UniquenessDP}
For any boundary data $\psi = (\psi^0, \psi^1) \in H^2(\partial M)$, there exists a unique solution $u = \sum_{m = 0}^\infty u_{m}(x) Y_{m}$ in $\dot{H}^1(M)$ of the Dirichlet problem (\ref{A-DirichletPb}). Moreover, the DN map $\Lambda_g$ is well-defined as a bounded operator from $H^s(\partial M)$ to $H^{s-2}(\partial M)$.
\end{thm}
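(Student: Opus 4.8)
The plan is to reduce the statement to the one-dimensional boundary value problems \eqref{A-RadialODE}, one per eigenmode of $-\triangle_{g_K}$, and to control each mode by an elementary energy comparison. The first remark is that, with $h=h_1^{n-1}$, the two conditions \eqref{A-CondMetric1} are \emph{exactly} Hypothesis \ref{H} with $p=r=\sqrt h$ and $q=0$; hence the whole apparatus of Section \ref{2} — fundamental systems of solutions, the characteristic function $\Delta(z)$, the Weyl--Titchmarsh functions $M(z),N(z)$ — is available. The Dirichlet realisation $S$ of $u\mapsto-\frac1{\sqrt h}(\sqrt h\,u')'$ is nonnegative and $0$ is not one of its eigenvalues (such an eigenfunction would be constant, hence zero), so its eigenvalues $(\alpha_k)$ lie in $(0,\infty)$; since $z=-\mu_m\le 0<\alpha_1$ for every $m$ we get $\Delta(-\mu_m)\ne0$, and by the limit-circle theory of Section \ref{2} the problem \eqref{A-RadialODE} has a unique solution $u_m$, expressible through $s_0,s_1$ and $\Delta$. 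For $m=0$, $u_0$ is the affine interpolation in the variable $t(x):=\int_0^x h(s)^{-1/2}\,ds$, which is well defined because $L:=t(1)<\infty$ by \eqref{A-CondMetric1}.

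The key estimate for each mode is obtained by combining the energy identity coming from multiplying the ODE by $\bar u_m$ and integrating by parts,
\begin{equation*}
 E_m:=\int_0^1\sqrt h\,\bigl(|u_m'|^2+\mu_m|u_m|^2\bigr)\,dx=(\sqrt h\,u_m')(1)\,\overline{\psi_m^1}-(\sqrt h\,u_m')(0)\,\overline{\psi_m^0},
\end{equation*}
with the fact that $u_m$ minimises $J_m(v):=\int_0^1\sqrt h\,(|v'|^2+\mu_m|v|^2)\,dx$ among $v$ with the prescribed endpoint values (point evaluations are meaningful because $h^{-1/2}\in L^1$ forces finite-energy functions to be continuous by Cauchy--Schwarz, and $u_m$ solves the Euler--Lagrange equation, which is the ODE; uniqueness of the latter identifies the two). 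Comparing $u_m$ with the affine competitor $w_m(x):=\psi_m^0(1-t(x)/L)+\psi_m^1\,t(x)/L$, for which $\int_0^1\sqrt h\,|w_m'|^2\,dx=|\psi_m^1-\psi_m^0|^2/L$ and $\int_0^1\sqrt h\,|w_m|^2\,dx\le(|\psi_m^0|+|\psi_m^1|)^2\int_0^1\sqrt h\,dx$, yields
\begin{equation*}
 E_m\ \le\ C\,(1+\mu_m)\,\bigl(|\psi_m^0|^2+|\psi_m^1|^2\bigr),
\end{equation*}
with $C$ depending only on $\int_0^1\sqrt h\,dx$ and $\int_0^1 h^{-1/2}\,dx$. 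This is the one step where both halves of \eqref{A-CondMetric1} are genuinely used.

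Summing over $m$ and invoking \eqref{A-Hdot1} gives $\sum_m E_m\le C\,\|\psi\|_{H^1(\partial M)}^2\le C\,\|\psi\|_{H^2(\partial M)}^2<\infty$, so $u:=\sum_m u_m Y_m\in\dot H^1(M)$; that $-\triangle_g u=0$ distributionally with trace $\psi$ is checked mode by mode by testing against $\varphi(x)Y_m$, and uniqueness is immediate since with vanishing data each $u_m$ solves \eqref{A-RadialODE} with zero endpoint values, hence $u_m\equiv0$. For the DN map, the computation of Section \ref{3} specialised to $n_2=0$ and $\lambda=0$ shows that it acts on $\langle Y_m\rangle$ by the matrix $\bigl(\begin{smallmatrix}-M(-\mu_m)&-1/\Delta(-\mu_m)\\ -1/\Delta(-\mu_m)&-N(-\mu_m)\end{smallmatrix}\bigr)$; feeding the data $(1,0)$ and $(0,1)$ into the energy identity and using $E_m\ge0$ gives $0\le -M(-\mu_m)\le C(1+\mu_m)$ and likewise for $N(-\mu_m)$, while the Hadamard factorisation of Section \ref{2} together with $\Delta(0)=L>0$ gives $\Delta(-\mu_m)=L\prod_k(1+\mu_m/\alpha_k)\ge L$, so $0<1/\Delta(-\mu_m)\le 1/L$. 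Hence $\|\Lambda_g^m\|\le C(1+\mu_m)$ and $\Lambda_g:H^s(\partial M)\to H^{s-2}(\partial M)$ is bounded for every $s$.

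The hard part is this polynomial-in-$\mu_m$ control of $E_m$ — equivalently of $M,N$ and $1/\Delta$: the a priori bounds of Lemma \ref{2-EstFSS} on the fundamental solutions are only exponential in $\sqrt{\mu_m}$ and give nothing, and it is precisely the variational comparison with the explicit affine competitor, which forces the use of \emph{both} assumptions in \eqref{A-CondMetric1}, that replaces them. The remaining points (the variational characterisation of $u_m$, convergence of $\sum_m u_m Y_m$ in $\dot H^1(M)$, and identification of its trace) are routine once one notes that $\{v:\sqrt h\,|v'|^2\in L^1(0,1)\}$ embeds continuously into $C^0([0,1])$ via Cauchy--Schwarz and $h^{-1/2}\in L^1$.
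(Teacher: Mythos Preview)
Your proof is correct and follows a genuinely different route from the paper's.

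The paper obtains the bound $|M(-\mu_m)|,|N(-\mu_m)|\le C(1+\mu_m)$ via the Herglotz/Nevanlinna representation \eqref{2-SpectralMeasure-M1} of the Weyl--Titchmarsh function in terms of the spectral measure, together with the integrand estimate $\bigl|\frac{1}{\omega-z}-\frac{\omega}{1+\omega^2}\bigr|\le\frac{C_1|z|+C_2}{1+\omega^2}$. It then proves, by a separate positivity/monotonicity argument (Proposition \ref{A-EstPbDirichlet}), the pointwise bounds $|\Psi|,|\Phi|\le1$ and $|\sqrt h\,\Psi'|,|\sqrt h\,\Phi'|\le C(1+\mu_m)$, which in turn give $E_m\le C(1+\mu_m)^2(|\psi_m^0|^2+|\psi_m^1|^2)$ and hence require $H^2$ boundary data for the $\dot H^1$ bound. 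Your argument bypasses both the Herglotz machinery and the monotonicity analysis: the energy identity combined with the variational characterisation and the explicit affine-in-$t(x)$ competitor yields directly $E_m\le C(1+\mu_m)(|\psi_m^0|^2+|\psi_m^1|^2)$, and feeding $(1,0)$, $(0,1)$ into the same identity recovers the bounds on $M,N$. Your estimate is in fact one power of $(1+\mu_m)$ better than the paper's for $E_m$, so you actually show that $\psi\in H^1(\partial M)$ already suffices for $u\in\dot H^1(M)$; the $H^2$ hypothesis in the statement is stronger than what your proof needs. Both approaches coincide on the treatment of $1/\Delta$ via the Hadamard factorisation and the positivity of the Dirichlet spectrum. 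What the paper's route buys in exchange is the pointwise information $|\Psi|,|\Phi|\le1$ on $[0,1]$, which your variational argument does not give but which is not needed for the theorem as stated.
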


\begin{proof}
It is well known (see \cite{Ze}) that the conditions (\ref{A-CondMetric1}) are the minimal conditions on $h_1$ such that there exist fundamental systems of solutions $\{c_0,s_0\}$ and $\{c_1,s_1\}$ of the ODE (\ref{A-RadialODE}) which satisfy the Cauchy conditions (\ref{FSS}). As in Sections \ref{2} and \ref{3}, we then define the characteristic and WT functions
$$
  \Delta(z) = W(s_0,s_1), \ D(z) = W(c_0,s_1), \ E(z) = W(c_1,s_0), \ M(z) = -\frac{D(z)}{\Delta(z)}, \ N(z) = -\frac{E(z)}{\Delta(z)},
$$
As in Section \ref{2}, we still denote $(\alpha_k)_{k \geq 1}$ the Dirichlet spectrum of the operator $S = -\frac{1}{\sqrt{h}} \frac{d}{dx} \left( \sqrt{h} \frac{d}{dx} \right)$ on $L^2((0,1), \sqrt{h} dx)$. Clearly, the eigenvalues $\alpha_k$ are positive for every $k \geq 1$.     We recall that the Weyl functions are defined for all $z \ne \alpha_k$ by
$$
  \Psi(x,z) = c_0(x,z) + M(z) s_0(x,z), \quad \Phi(x,z) = c_1(x,z) - N(z) s_1(x,z).
$$
A straightforward calculation (see the proof of the Proposition below) shows that the unique solution $u_{m}$ of (\ref{A-RadialODE}) can be expressed as
\begin{equation} \label{A-um}
  u_{m}(x) = \psi^1_{m} \Phi(x,z_{m}) + \psi^0_{m} \Psi(x,z_{m}).
\end{equation}
Moreover, we already showed in Section \ref{3} that the DN map on each harmonic $\langle Y_{m} \rangle$ has the simple expression
\begin{equation} \label{A-DNm}
  \Lambda_g^{m}(\lambda) = \left( \begin{array}{cc} -M(z_{m}) & -\frac{1}{\Delta(z_{m})} \\ -\frac{1}{\Delta(z_{m})} & -N(z_{m}) \end{array} \right).
\end{equation}
We must thus make precise the meaning of the \emph{global} objects 
\begin{equation} \label{u-DN-Global}
  u = \sum_{m= 0}^\infty u_{m}(x) Y_{m}, \quad \Lambda_g(\lambda) = \oplus_{m} \Lambda_g^{m}(\lambda).
\end{equation}

Let us start with some estimates of the WT function $M(z)$. Recall from (\ref{2-SpectralMeasure-M1}), that for all $z \in \C \setminus [\sigma,+\infty)$ with $\sigma < \alpha_1$, we have

\begin{equation}\label{A-SpectralMeasure-M1}
  M(z) = c + \int_\sigma^\infty \left[ \frac{1}{\omega - z} - \frac{\omega}{1+\omega^2} \right] d\rho(\omega), \quad \forall z \in \C^+.
\end{equation}
We now use (\ref{A-SpectralMeasure-M1}) to estimate $M(z)$ when $z$ is negative and large. More precisely, let $z \in (-\infty, \sigma - 1]$ and $\omega \in [\sigma,+\infty)$. Using (\ref{A-SpectralMeasure-M1}) and the estimate
\begin{equation} \label{A-SpectralMeasure-M2}
  \left| \frac{1}{\omega - z} - \frac{\omega}{1+\omega^2} \right| \leq \frac{C_1 |z| + C_2}{1+\omega^2},
\end{equation}
where $C_1$ and $C_2$ are constants, we deduce that
\begin{equation} \label{A-EstM}
  |M(z)| \leq C (1+|z|), \quad \forall z \leq \sigma - 1.
\end{equation}
Recalling that $z_{m} = -\m$, we have proved

\begin{lemma} \label{A-EstMm}
For all $m \geq 0$, we have
$$
|M(z_{m})|, \, |N(z_{m})|  \leq C (1+ \m).
$$
\end{lemma}

We also have

\begin{lemma} \label{A-EstDeltam}
For all $m \geq 0$,
$$
  \left| \frac{1}{\Delta(z_m)} \right| \leq C.
$$
\end{lemma}

\begin{proof}
Since $\alpha_k >0$, it follows from Lemma \ref{2-Hadamard} that $m_1=0$ and for  $z \in \R^-$, $|\Delta(z)| \geq C_1 $.
\end{proof}

Let us finally prove

\begin{prop} \label{A-EstPbDirichlet}
  1) The unique solution $u_{m}$ of (\ref{A-RadialODE}) can be expressed as
$$
  u_{m}(x) = \psi^1_{m} \Phi(x,z_{m}) + \psi^0_{m} \Psi(x,z_{m}).
$$
2) The following estimates hold
$$
  |\Psi(x,z_m)|, \ |\Phi(x,z_m)| \leq 1, \quad \forall x \in [0,1], \ m \geq 0,
$$
$$
  |\sqrt{h} \Psi'(x,z_m)|, \ |\sqrt{h} \Phi'(x,z_m)| \leq C (1+\m), \quad \forall x \in [0,1], \ m \geq 0.
$$
\end{prop}

\begin{proof}
1 - The first assertion follows immediately  from $\Psi(0, z_m)=1$, $\Psi(1,z_m)=0$, $\Phi(0, z_m)=0$ and $\Phi(1,z_m)=1$.
\par\noindent
2 - For instance, let us show that $|\Phi(x,z_m)| \leq 1$  and $|\sqrt{h} \Phi'(x,z_m)| \leq C (1+\m)$ for $m \geq 0$. First, $\sqrt{h}\Phi'(0,z_m) \not=0$, otherwise $\Phi(x,z_m)$ would be equal to $0$ on $(0,1)$.
Let us show that $\sqrt{h}\Phi'(0,z_m) >0$. Indeed, if it is not the case, $\Phi(x,z_m) <0$ for $x$ in a neighborhood of $0$. Now, assume there exists $a \in (0,1)$ such that
$\Phi(x,z_m) <0$ for $x \in (0,a)$ and $\Phi(a,z_m)=0$. Thus, $\sqrt{h}\Phi'(b, z_m)=0$ for some $b \in (0,a)$, and using
\begin{equation}
   \sqrt{h} \Phi'(b, z_m) - \sqrt{h} \Phi'(0, z_m) = - \sqrt{h} \Phi'(0, z_m) = \m \int_0^b \sqrt{h} \Phi(t, z_m) dt,
\end{equation}
we obtain a contradiction. So, we have $\Phi(x, z_m) <0 $ for $x \in (0,1)$ which is not compatible with $\Phi(1,z_m)=1$.
\par
Thus, we have proved that  $\sqrt{h}\Phi'(0,z_m) >0$, and a similar argument implies that $\Phi(x,z_m) >0$ on $(0,1)$. From
\begin{equation}\label{derivee}
  \sqrt{h} \Phi'(x, z_m) = \sqrt{h} \Phi'(0, z_m) +   \m \int_0^x \sqrt{h} \Phi(t, z_m) dt >0,
\end{equation}
we deduce that  $\Phi(x, z_m)$ in an increasing function, which implies  $0 \leq \Phi(x, z_m)\leq 1$ for all $x \in (0,1)$.
\par\noindent
To prove the second estimate,  we remark that  $\sqrt{h}\Phi'(1,z_m)= -N(z_m)$. So, using
\begin{equation}\label{derivee1}
  \sqrt{h} \Phi'(1, z_m)- \sqrt{h} \Phi'(x, z_m) =  \m \int_x^1 \sqrt{h} \Phi(t, z_m) dt,
\end{equation}
and Lemma \ref{A-EstMm}, we obtain immediately $|\sqrt{h} \Phi'(x,z_m)| \leq C (1+\m)$.
\end{proof}
As a corollary, we obtain

\begin{coro} \label{A-Estum}
For all $x \in [0,1]$ and $m \geq 0$,
$$
  |u_m(x)| \leq |\psi_m^0| + |\psi_m^1|, \quad |\sqrt{h}u'_m(x)| \leq C (1+ \m) (|\psi_m^0| + |\psi_m^1|).
$$
\end{coro}

Now the results given in Lemmas \ref{A-EstMm}, \ref{A-EstDeltam}, Proposition \ref{A-EstPbDirichlet} and Corollary \ref{A-Estum} together with (\ref{A-um}) and (\ref{A-DNm}) allow to give a global meaning to (\ref{u-DN-Global}) with the properties stated in the theorem.
\end{proof}

\begin{rem} \label{UniquenessInHdot}
  Note that the extra condition (\ref{A-CondMetric2}) could be used to obtain a solution $u$ of (\ref{A-DirichletPb}) in the usual Hilbert space $H^1(M)$. Indeed, if we look for solutions $u$ in the Sobolev space
$$
  H^1(M) = \left\{ u \suchthat  \int_M (|u|^2 + |\nabla u|^2_g ) \, dVol_g < \infty \right\},
$$
which is equivalent after separation of variables to
\begin{equation} \label{A-H1}
  H^1(M) = \left\{ u = \sum_{m = 0}^\infty u_{m}(x) Y_{m} \suchthat \sum_{m = 0}^\infty \int_0^1 (|u'_{m}|^2 + \m |u_{m}|^2 + h_1 |u_m|^2) \sqrt{h} \,  dx < \infty \right\},
\end{equation}	
then the same proof as above shows that there exists a unique solution $u \in H^1(M)$ of (\ref{A-DirichletPb}) provided $\psi \in H^2(\partial M)$.
\end{rem}

\begin{rem} \label{UsualDN} 
  In the \emph{regular} case, \textit{i.e.} $h_1 \in C^2(0,1)$ and $h_1 > 0$ on $[0,1]$, the WT functions $M$ and $N$ can be shown to satisfy for all $m \geq 0$ the estimates:
$$
  |M(z_{m})|, \, |N(z_{m})|  \leq C (1+ \m)^{1/2}.
$$
From this and the above calculations, we can recover in that case that the DN map $\Lambda_g$ is a well defined bounded operator from $H^s(\partial M)$ into $H^{s-1}(\partial M)$ for any $s \in \R$.  
\end{rem}

According to Theorem \ref{A-UniquenessDP} and Remark \ref{UniquenessInHdot}, the DN map $\Lambda_g$ is thus well defined for any boundary data $\psi \in H^s(\partial M)$ with $s \in \R$ under the minimal assumption (\ref{A-CondMetric1}). We can thus adress the question of the uniqueness of the metric (modulo gauge invariance) from the knowledge of the DN map. Let us prove Theorem \ref{MainThm2}, whose statement we recall here for the sake of convenience:

\begin{thm} \label{A-UniquenessCalderon}
Let $(M,g)$ and $(M,\tilde{g})$ denote two Riemannian manifolds of the form (\ref{A-Metric}) satisfying the conditions (\ref{A-CondMetric1}). Assume moreover that:
\begin{equation} \label{A-MainAssump}
 \Lambda_{g, \Gamma_j,\Gamma_j} = \Lambda_{\tilde{g}, \Gamma_j,\Gamma_j} \ \ ,\ \ {\rm{for}} \ j=0 \ {\rm{or}} \ j= 1.
\end{equation}
Then
$$
g= \tilde{g}.
$$
\end{thm}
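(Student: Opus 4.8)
The plan is to run, in this simpler warped-product setting, the same three-step scheme used for Theorem~\ref{MainThm}: convert the equality of local DN maps into equality of Weyl--Titchmarsh functions on the discrete set of radial eigenvalues, upgrade this to an equality on all of $\C$ by a complex angular momentum argument, and then apply the inverse spectral uniqueness result of Section~\ref{2} to identify the metrics. Since $(K,g_K)$ is common to $g$ and $\tilde{g}$, the operators $-\Delta_g$ and $-\Delta_{\tilde{g}}$ diagonalise on the same Hilbert basis $(Y_m)_{m\geq 0}$ of $-\Delta_{g_K}$-harmonics, with the same eigenvalues $(\m)$; by (\ref{A-DNm}) the restriction of $\Lambda_{g,\Gamma_0,\Gamma_0}$ (resp. $\Lambda_{g,\Gamma_1,\Gamma_1}$) to the harmonic $\langle Y_m\rangle$ is multiplication by $-M(z_m)$ (resp. $-N(z_m)$), with $z_m=-\m$. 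Hence (\ref{A-MainAssump}) is equivalent to $M(z_m)=\tilde{M}(z_m)$ for all $m\geq 0$ when $j=0$, and to $N(z_m)=\tilde{N}(z_m)$ for all $m\geq 0$ when $j=1$; the involution $x\mapsto 1-x$ exchanges these two situations, so I would only treat $j=0$.

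For the analytic continuation I would follow Section~\ref{CAM} verbatim. Setting $F(z)=D(-z^2)\tilde{\Delta}(-z^2)-\tilde{D}(-z^2)\Delta(-z^2)$, the identity (\ref{MD}) shows that $M(z_m)=\tilde{M}(z_m)$ is equivalent to $F(\sqrt{\m})=0$, so $F$ vanishes on the whole sequence $(\sqrt{\m})_{m\geq 0}$. Here Lemma~\ref{2-EstFSS} applies because $A=\frac{1}{2}\int_0^1 \big(\frac{1}{\sqrt{h}}+\sqrt{h}\big)\,dx<\infty$ thanks to (\ref{A-CondMetric1}); together with Corollary~\ref{2-EstRef} this shows that $F$ is entire of exponential type with $h_F(\theta)\leq 2|\cos\theta|$. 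Using the Weyl law $\sqrt{\m}=Cm^{1/n}+O(1)$ and the same two successive rescalings $z\mapsto F(Cz)$ then $z\mapsto F(CNz)$ (with $N\in\N$ large) as in Section~\ref{CAM}, one extracts from $(\sqrt{\m})$ a subsequence obeying the separation conditions (\ref{3-Separation}), and the Duffin--Schaeffer Theorem~\ref{Duffin} forces $F\equiv 0$ on $\C$, i.e. $M(z)=\tilde{M}(z)$ for all $z\in\C$.

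The last step is short because the potential vanishes: the radial equation (\ref{A-RadialODE}) is of the form treated in Section~\ref{2} with $p=r=\sqrt{h}$ and $q\equiv 0$. Applying Corollary~\ref{PR}, there is an absolutely continuous $\kappa>0$ on $[0,1]$ with $\sqrt{h}\,\kappa'$ absolutely continuous, $\kappa(0)=1$ and $(\sqrt{h}\,\kappa')(0)=0$, such that $\sqrt{\tilde{h}}=\kappa^2\sqrt{h}$ and $0=\tilde{q}=\kappa^2 q-\kappa(\sqrt{h}\,\kappa')'=-\kappa(\sqrt{h}\,\kappa')'$. Since $\kappa>0$ the second relation gives $(\sqrt{h}\,\kappa')'\equiv 0$, hence $\sqrt{h}\,\kappa'\equiv 0$ by the boundary condition, hence $\kappa'\equiv 0$ (as $h>0$ a.e.) and $\kappa\equiv 1$; then $\tilde{h}=h$, i.e. $\tilde{h}_1^{\,n-1}=h_1^{\,n-1}$, whence $\tilde{h}_1=h_1$ and $g=\tilde{g}$. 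Theorem~\ref{A-UniquenessCalderon} then follows from (\ref{A-DNm}) exactly as in Section~\ref{UniquenessCalderon}.

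I expect the delicate point to be the complex angular momentum step under the \emph{minimal} integrability hypothesis (\ref{A-CondMetric1}): one must make sure that the entire apparatus of Section~\ref{2} --- the exponential-type bounds on the fundamental systems, the de~Branges/Cartwright framework behind Theorem~\ref{ISM}, and the indicator-function computations --- still goes through when only $\frac{1}{\sqrt{h}},\ \sqrt{h}\in L^1(0,1)$ is assumed. This is precisely where (\ref{A-CondMetric1}) is used, and it is the reason these conditions turn out to be sharp. Everything downstream of the analytic continuation is a rigid one-dimensional computation, since $q\equiv 0$ collapses Corollary~\ref{PR} to $\kappa\equiv 1$.
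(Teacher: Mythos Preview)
Your proposal is correct and follows essentially the same three-step scheme as the paper's own proof: diagonalise the DN map on the harmonics $(Y_m)$ to obtain $M(z_m)=\tilde{M}(z_m)$, run the CAM argument of Section~\ref{CAM} (which goes through verbatim since (\ref{A-CondMetric1}) is exactly the condition making $A=\frac12\int_0^1(1/\sqrt{h}+\sqrt{h})\,dx$ finite in Lemma~\ref{2-EstFSS}), and finish with Corollary~\ref{PR}. The only cosmetic difference is that you spell out the final step explicitly --- with $q\equiv 0$ the relation (\ref{syst3}) reduces to $(\sqrt{h}\,\kappa')'=0$, forcing $\kappa\equiv 1$ --- whereas the paper simply invokes ``the same argument as in Section~\ref{UniquenessCalderon}'', which at $\lambda=0$ is precisely this computation.
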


\begin{proof}
The proof is the same as in Section \ref{3} once we have noticed that the CAM method of Section \ref{CAM} is still valid under (\ref{A-CondMetric1}). Hence we can first prove from our main assumption (\ref{A-MainAssump}) that
$$
	M(z) = \tilde{M}(z), \quad N(z) = \tilde{N}(z), \quad \forall z \in \C.
$$
We can then apply Corollary \ref{PR} and the same argument as in Section \ref{UniquenessCalderon} to conclude that $g = \tilde{g}$.
\end{proof}

%%%%% EXTENSION

\begin{rem} \label{A-Extension-MainThm}

As in Remark \ref{Extension-MainThm}, we can extend the uniqueness results of Theorem \ref{A-UniquenessCalderon} to a slightly more general situation. Precisely, let $h_0, h_1$ be measurable functions on $[0,1]$ such that
\begin{enumerate}
\item	For $j=0,1, \ h_j > 0$ a.e. in $[0,1]$,
\item $\sqrt{h_0 h_1^{n-2}}, \ \sqrt{\frac{h_0}{h_1^{n}}} \in L^1(0,1)$,
\item $\sqrt{\frac{h_0}{h_1}} \in L^1(0,1)$.
\end{enumerate}
Consider on $M = [0,1] \times K$ the class of Riemannian metrics
\begin{equation} \label{A-ExtendedMetric}
  g = h_0(x) dx^2 + h_1(x) [g_K].  		
\end{equation}

Then we have

\begin{thm} \label{A-MainThm-Extended1}
Let $g$ and $\tilde{g}$ be two metrics in the class (\ref{A-ExtendedMetric}) with $h_0, h_1$ satisfying the above assumptions. Assume that
$$
 \Lambda_{g, \Gamma_j,\Gamma_j} = \Lambda_{\tilde{g}, \Gamma_j,\Gamma_j} \ \ ,\ \ {\rm{for}} \ j=0 \ {\rm{or}} \ j= 1,
$$
for an admissible fixed energy $\lambda$. Then there exists an increasing bijection $\psi: \ [0,1] \longrightarrow [0,1]$ with $\psi$ and $\psi^{-1}$ absolutely continuous on $[0,1]$ such that
$$
  \tilde{g} = \psi^* g.
$$
\end{thm}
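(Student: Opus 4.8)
The plan is to reduce Theorem \ref{A-MainThm-Extended1} to Theorem \ref{A-UniquenessCalderon} by the device already used in Remark \ref{Extension-MainThm} for the doubly warped case: an absolutely continuous change of the radial variable that brings a metric of the extended form (\ref{A-ExtendedMetric}) into a pure warped product (\ref{A-Metric}).

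First I would introduce, for $g = h_0(x)\,dx^2 + h_1(x)\,g_K$, the function $\phi(x) = \int_0^x \sqrt{h_0(s)/h_1(s)}\,ds$, which by assumption $(3)$ is well defined, increasing, and an absolutely continuous bijection from $[0,1]$ onto $[0,A]$ with $A = \int_0^1 \sqrt{h_0/h_1}$. Since $h_0,h_1>0$ a.e., one has $(\phi^{-1})' = \sqrt{h_1/h_0}\circ\phi^{-1}$ a.e., so that $\int_0^A (\phi^{-1})'\,dy = \int_0^1 \sqrt{h_1/h_0}\,\sqrt{h_0/h_1}\,dx = 1$ and $\phi^{-1}$ is absolutely continuous as well. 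In the coordinate $y=\phi(x)$ the metric becomes the pure warped product $\hat g = (\phi^{-1})^*g = H_1(y)[dy^2 + g_K]$ on $\hat M = [0,A]\times K$ with $H_1 = h_1\circ\phi^{-1}$, and a change of variables in the defining integrals shows
$$
  \int_0^A \sqrt{H_1^{\,n-1}}\,dy = \int_0^1 \sqrt{h_0 h_1^{\,n-2}}\,dx, \qquad \int_0^A \frac{dy}{\sqrt{H_1^{\,n-1}}} = \int_0^1 \sqrt{\frac{h_0}{h_1^{\,n}}}\,dx,
$$
so that assumption $(2)$ is exactly the statement that $\hat g$ satisfies the conditions (\ref{A-CondMetric1}) on $[0,A]$. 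Hence Theorem \ref{A-UniquenessDP} applies to $\hat g$, and since $\phi$ is a boundary-preserving absolutely continuous diffeomorphism, the weak formulation of the Dirichlet problem and the bilinear form defining the DN map transform identically — using that absolutely continuous maps of one variable are admissible in Lebesgue's change of variables formula (\cite{Ru}, Thm 7.26) — so that $\Lambda_{\hat g,\Gamma_j,\Gamma_j} = \Lambda_{g,\Gamma_j,\Gamma_j}$.

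I would then carry out the same reduction for $\tilde g$, obtaining $\hat{\tilde g} = \tilde H_1(y)[dy^2+g_K]$ on $[0,\tilde A]\times K$ with $\Lambda_{\hat g,\Gamma_j,\Gamma_j} = \Lambda_{g,\Gamma_j,\Gamma_j} = \Lambda_{\tilde g,\Gamma_j,\Gamma_j} = \Lambda_{\hat{\tilde g},\Gamma_j,\Gamma_j}$, and run the argument of Section \ref{4} essentially verbatim, noting that it is insensitive to the length of the radial interval. Separating variables, the DN maps restrict on each harmonic to multiplication by the Weyl--Titchmarsh functions of the Sturm--Liouville problems with $p=r=\sqrt{H_1^{\,n-1}}$ (resp. $\sqrt{\tilde H_1^{\,n-1}}$); the CAM/Duffin--Schaeffer step of Section \ref{CAM} still forces $M_{\hat g}(z) = M_{\hat{\tilde g}}(z)$ for all $z\in\C$ (only the constants in the indicator-function and Weyl-law estimates change); hence the spectral measures coincide, and Theorem \ref{ISM} together with the de Branges argument of Corollary \ref{PR} yields $\eta'\equiv 1$ — so in particular $A=\tilde A$ and $\eta=\mathrm{id}$ — and then $\kappa\equiv 1$, i.e. $\hat{\tilde g}=\hat g$. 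Pulling back, $\tilde g = \psi^*g$ with $\psi = \phi_g^{-1}\circ\phi_{\tilde g}$, which is an increasing bijection of $[0,1]$ onto itself that is absolutely continuous with absolutely continuous inverse (composition of an absolutely continuous map with an absolutely continuous monotone map).

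The main obstacle I anticipate is the verification in the second step that the DN map is genuinely invariant under a merely absolutely continuous (not smooth) change of the radial variable: one must check that $\dot H^1$ solutions and their weakly defined normal derivatives (as elements of $H^{s-2}(\partial M)$) pull back correctly, which rests on the careful use of Lebesgue's change of variables theorem for absolutely continuous maps and on the fact that $\phi^{-1}$ is absolutely continuous — the latter being precisely where the pointwise positivity $h_0,h_1>0$ a.e. together with the integrability in assumption $(3)$ enters. A secondary point of care is the bookkeeping needed to apply Theorem \ref{ISM} on intervals $[0,A]$, $[0,\tilde A]$ of a priori different lengths; this is harmless, since the conclusion $\eta'=1$ forces $A=\tilde A$, but it should be stated explicitly rather than glossed over.
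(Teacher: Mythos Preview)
Your proposal is correct and follows precisely the route the paper indicates: the paper does not give an explicit proof of Theorem \ref{A-MainThm-Extended1} but simply refers back to Remark \ref{Extension-MainThm}, and your reduction via the absolutely continuous change of variable $y=\phi(x)=\int_0^x\sqrt{h_0/h_1}\,ds$ is exactly that argument, carried out in more detail. Your explicit verification that assumption (2) is equivalent to (\ref{A-CondMetric1}) for the transformed metric, and your handling of the a priori different lengths $A,\tilde A$ via the conclusion $\eta'\equiv 1$ of Theorem \ref{ISM}, make rigorous the ``slight generalization'' the paper invokes but does not spell out.
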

\end{rem}

%%% INVISIBILITY

\subsection{Even more singular metrics and invisibility.}

In this last section, we give simple counterexamples to uniqueness of the Calder\'{o}n's problem for metrics $g$ as in Theorems \ref{A-UniquenessCalderon} or \ref{A-MainThm-Extended1} but with singular coefficients which do not satisfy the assumptions of these theorems. We shall give counter-examples in the case of zero frequency that are \emph{sharp with respect to our uniqueness results}. We emphasize that these counterexamples are relatively close in spirit to the the ones given in \cite{GKLU, GKLU2, GLU}. \\

\noindent \textbf{A}. Consider first a metric $g$ of the type (\ref{A-Metric}) where
\begin{equation} \label{h1}
  \sqrt{h}(x) = h_1^{\frac{n-1}{2}}(x) = \left\{ \begin{array}{cc}
	                                             (\frac{1}{4} - x)^{-r}, & x \in [0,\frac{1}{4}[, \\
																							 f(x), & x \in [\frac{1}{4}, \frac{3}{4}], \\
																							 (x-\frac{3}{4})^{-r}, & x \in ]\frac{3}{4},1],
																								\end{array} \right.
\end{equation}
where $f(x)$ is any smooth positive function on $[\frac{1}{4}, \frac{3}{4}]$ and $r \geq 1$. Clearly, the coefficients of this metric are neither bounded near the singular surfaces $x= \frac{1}{4}$ and $x=\frac{3}{4}$, nor do they satisfy (\ref{A-CondMetric1}) since $\sqrt{h} \notin L^1(0,1)$. Such surfaces are called {\it{interfaces}}.

We look at the Dirichlet problem at zero frequency
\begin{equation} \label{DirichletPb-lambda0}
  \left\{ \begin{array}{cc} -\Delta_g u = 0, & \textrm{on} \ M, \\ u = \psi, & \textrm{on} \ \partial M. \end{array} \right.
\end{equation}
Recall that using separation of variables, we look at the solutions $u$ of (\ref{DirichletPb-lambda0}) under the form
$$
  u = \sum_{m = 0}^\infty u_{m}(x) Y_{m},
$$
and that the $u_{m}$ satisfy the ODEs with boundary conditions
\begin{equation} \label{ODE-BC-1}
  \left\{  \begin{array}{c} -\frac{1}{\sqrt{h}} \left( \sqrt{h} u_{m}' \right)' + \m u_{m} = 0, \\
	                          u_{m}(0) = \psi^0_{m}, \quad u_{m}(1) = \psi^1_{m},
					 \end{array} \right.										
\end{equation}
where $\psi = (\psi^0, \psi^1) = \ds \sum_{m=0}^\infty (\psi^0_{m}, \psi^1_{m})$ are the Fourier coefficients of the boundary data $\psi$. With our choice (\ref{h1}), this ODE becomes on the interval $(\frac{3}{4}, 1]$
\begin{equation} \label{ODE1}
  u_{m}''  - \frac{r}{(x-\frac{3}{4})} u_{m}' - \mu_m u_{m} = 0,
\end{equation}
and a similar ODE on $[0,\frac{1}{4}[$. This is is a standard Bessel equation, (see for instance \cite{Leb}, Eq. (5.4.11)).

In order to simplify the notation in the following expressions, we set $X = x-\tr$. Thanks to (\cite{Leb}, Eq. (5.4.12)), for $m \not=0$, the solutions of (\ref{ODE1}) have the form:
\begin{equation}\label{Bessel1}
u_{m} = A_{m}\  X^{\frac{1+r}{2}} \ I_{\frac{1+r}{2}} (\sqrt{\mu_m} X) + B_{m} \ X^{\frac{1+r}{2}}  K_{\frac{1+r}{2}} (\sqrt{\mu_m} X),
\end{equation}
where $A_{m}, \  B_{m}$ are some complex constants, and $I_{\nu}$, $K_{\nu}$ are the modified Bessel functions.  In the same way, for $m=0$, the solutions of (\ref{ODE1}) are:
\begin{equation}\label{Bessel10}
u_{0} = A_{0} X^{1+r} + B_{0},
\end{equation}
for some  constants $A_{0}, \ B_{0} \in \C$.

In order to determine the constants $A_{m}$ and $B_{m}$, we must impose some natural energy condition on the solution $u$ (and thus the $u_{m}$) we are looking for and of course, use the boundary condition at $x=1$. Following \cite{GKLU}, we say that $u$ is a solution of finite energy of (\ref{DirichletPb-lambda0}) if $u$ satisfies (\ref{Weak-DirichletPb}) and  $u \in H^1(M)$, i.e $\int_M (|u|^2 + |\nabla u|^2_g ) dVol_g < \infty$. This last condition is equivalent using separation of variables to
\begin{equation} \label{FiniteEnergy1}
  \sum_{m} \int_0^1 \left( |u'_{m}|^2 + \mu_m |u_{m}|^2 + h_1(x) |u_{m}|^2 \right) \sqrt{h}(x) dx < \infty.
\end{equation}

Now, we recall (see \cite{Leb}, Eqs (5.7.2)- (5.7.12)) that for $\nu>0$, we have:
\begin{equation}\label{asympt10}
I_{\nu}(z) \sim  \left( \frac{z}{2}\right)^{\nu} \ \ , \ \ K_{\nu}(z) \sim \frac{1}{2} \Gamma(\nu) \left(\frac{z}{2} \right)^{-\nu} \ ,\ z \rightarrow 0,
\end{equation}
and we can differentiate these asymptotics. Since we look for solutions $u$ satisfying (\ref{FiniteEnergy1}), we have to take $B_{m}=0$ for all $m \geq 0$. Moreover, using the boundary condition $u_{m}(1)= \psi_{m}^1$, and since all the zeros of the modified Bessel function $I_{\nu}(z), \nu>0$ are purely imaginary, we obtain immediately:
\begin{equation}\label{constants1}
 A_{m} = \frac{4^{\frac{1+r}{2}} \psi_{m}^1}{I_{\frac{1+r}{2}} ( \frac{\sqrt{\mu_m}}{4})} \quad, \quad A_{0}= 4^{1+r} \psi_{0}^1.
\end{equation}
Consequently, for $x \in ]\tr, 1]$, the solution $u$ of (\ref{DirichletPb-lambda0}) is uniquely given by (with the notation $X= x-\tr$)
\begin{equation} \label{Sol1}
u = (4X)^{1+r} \psi_{0}^1 \ Y_{0} \ + \sum_{m \not= 0} (4X)^{\frac{1+r}{2}} \psi_{m}^1 \frac{I_{\frac{1+r}{2}} (\sqrt{\mu_m} X)}{I_{\frac{1+r}{2}} ( \frac{\sqrt{\mu_m}}{4})} \ Y_{m}.
\end{equation}

Let us now show that $u$ given by (\ref{Sol1}) is a finite energy solution in $]\frac{3}{4},1] \times K$ under the assumption $\psi \in H^1(\partial M)$. We thus have to prove that (\ref{FiniteEnergy1}) is true if we integrate $x$ between $\frac{3}{4}$ and $1$. This will follow from the known estimates on the ratios of Bessel functions (\cite{JB}, Eq. (1.5)):
\begin{equation}\label{boundratio1}
\left| \frac{I_{\nu}(x)}{I_{\nu}(y)} \right|  \leq \left(\frac{x}{y} \right)^{\nu} \ \ {\rm{for}} \ 0<x<y \ {\rm{and}} \ \nu >-\half.
\end{equation}
and the following estimate on the logarithmic derivatives of the Bessel functions (\cite{JB}, Eq. (1.4)):
\begin{equation} \label{LogDerivBessel1}
\left| \frac{I_{\nu}'(x)}{I_{\nu}(x)} \right|  \leq 1 + \frac{\nu}{x} \ ,\ {\rm{for}} \ x>0\ {\rm{and}} \ \nu >0.
\end{equation}
Using (\ref{Bessel1}), (\ref{constants1}), (\ref{boundratio1}) and (\ref{LogDerivBessel1}), we obtain easily
$$
  |u_{m}| \leq C X^{1+r} |\psi^1_{m}|, \quad |u_{m}'| \leq C X^{r} |\psi^1_{m}|,
$$
from which we see that if $\psi \in H^1(\partial M)$, (and $r<3$ if $n=2$),
$$
  \sum_{m=0}^\infty \int_{\frac{3}{4}}^1 \left( |u'_{m}|^2 + \mu_m |u_{m}|^2 + h_1(x) |u_{m}|^2 \right) \sqrt{h}(x) dx < C \sum_{m=0}^\infty (1+\mu_m) |\psi^1_{m}|^2 < \infty.
$$

On the interval $[0,\frac{1}{4}[$, we have similarly a unique solution of finite energy $u$ with boundary condition at $x=0$ given by
\begin{equation} \label{Sol10}
u = (4X)^{1+r} \psi_{0}^0 \ Y_{0} \ + \sum_{m \not= 0} (4X)^{\frac{1+r}{2}} \psi_{m}^0 \frac{I_{\frac{1+r}{2}} (\sqrt{\mu_m} X)}{I_{\frac{1+r}{2}} ( \frac{\sqrt{\mu_m}}{4})} \ Y_{m},
\end{equation}
where $X= \frac{1}{4} - x$.

It remains to extend the solution $u$ on the interval $[\frac{1}{4}, \frac{3}{4}]$ where the metric is smooth and therefore, where the Sturm-Liouville equation (\ref{ODE-BC-1}) is regular. Hence all possible solutions of (\ref{DirichletPb-lambda0}) on $[\frac{1}{4}, \frac{3}{4}]$ are $H^1$ in the classical sense and thus have finite energy. Among all these solutions, we choose the one such that $u$ satisfies (\ref{Weak-DirichletPb}) \emph{globally}, that is $u$ is a weak solution of (\ref{DirichletPb-lambda0}) on $M$. Using the separability of the equation, the condition (\ref{Weak-DirichletPb}) is equivalent here to
\begin{equation} \label{BC1}
 \int_0^1 \left( \sqrt{h} u_{m}' v_{m}' + \mu_m \sqrt{h} u_{m} v_{m} \right) dx = 0, \quad \forall v_{m} \in C^\infty_0(0,1).
\end{equation}
Since $\sqrt{h} u_{m}'$ is absolutely continuous on $(0,\frac{1}{4}) \cup (\frac{1}{4},\frac{3}{4}) \cup (\frac{3}{4}, 1)$, we can integrate by parts on each of these intervals and use the equation (\ref{ODE1}) to show that (\ref{BC1}) is equivalent to
\begin{equation} \label{BC10}
  (\sqrt{h} u_{m}')(\frac{1}{4}-) = (\sqrt{h} u_{m}')(\frac{1}{4}+), \quad (\sqrt{h} u_{m}')(\frac{3}{4}-) = (\sqrt{h} u_{m}')(\frac{3}{4}+).
\end{equation}
Hence, between $\frac{1}{4}$ and $\frac{3}{4}$, we must choose the separated solution $u_{m}$ that satisfies the boundary conditions (\ref{BC10}). Using (\ref{asympt10}) and its derivative, we see immediately that
\begin{equation} \label{Neumann1}
  (\sqrt{h} u_{m}')(\frac{1}{4}-) = D_{m}^0, \quad (\sqrt{h} u_{m}')(\frac{1}{4}-) = D_{m}^1,
\end{equation}
for some constants $D_{m}^0$ and $D_{m}^1$. We thus choose $u_{m}$ between $\frac{1}{4}$ and $\frac{3}{4}$ as the unique solution of the regular ODE (\ref{ODE-BC-1}) with Neumann boundary conditions (\ref{BC10})-(\ref{Neumann1}). This solution is unique since for all $m \ne 0$, the value $-\mu_m$ is negative and thus does not belong to the Neumann spectrum of the operator $H= -\frac{1}{\sqrt{h}} \frac{d}{dx} \left( \sqrt{h} \frac{d}{dx} \right)$. For $m = 0$ however, $-\mu_0 = 0$ and thus belong to the Neumann spectrum of $H$. But the eigenvalue $0$ corresponds to the constant solutions which have been ruled out by our requirement that the solution be of finite energy, \textit{i.e.} $B_{0} = 0$.

To summarize, there is a unique solution of finite energy $u$ of (\ref{DirichletPb-lambda0}). The associated DN map can be computed exactly as in section \ref{3} and its expression on each harmonic $Y_{m}$ still takes the form
\begin{equation} \label{DN-Singular1}
  \Lambda_g^{m}(0) \left( \begin{array}{c} \psi_{m}^0  \\ \psi_{m}^1  \end{array} \right) =
  \left( \begin{array}{c} - (\sqrt{h} u_{m}')(0)  \\ \ \ (\sqrt{h} u_{m}')(1)  \end{array} \right) .
\end{equation}
Since $(\sqrt{h} u_{m}')(0)$ and $(\sqrt{h} u_{m}')(1)$ only depend on the metric on $([0,\frac{1}{4}[ \cup ]\frac{3}{4},1]) \times K$ according to (\ref{Sol1}) and (\ref{Sol10}), we see that the DN map cannot \emph{see} the metric between $\frac{1}{4}$ and $\frac{3}{4}$. In other words, any body within $([\frac{1}{4}, \frac{3}{4}]) \times K$ is cloaked from the outside world by making boundary measurements at $x=0$ and $x=1$.

\begin{rem}
We can naturally adapt the above construction to find counterexamples to uniqueness for the partial DN map $\Lambda_{g,\Gamma_1,\Gamma_1}(0)$ for instance. In this case, only one interface, say $x =\tr$ is necessary to obtain the same result, leading to the conclusion that the metric between $0$ and $\frac{3}{4}$ cannot be seen by a boundary measurement.

An illustrative example in dimension $3$ would be (take $n = 2$ and $r \in [1,3[$)
$$
  M = [0,1] \times \mathbb{S}^2,
$$
equipped with the metric ($f(x)$ is any positive smooth function)
$$
   g = \left\{ \begin{array}{cc} (x-\frac{3}{4})^{-2r} (dx^2 + g_{\mathbb{S}^2}) & x \in ]\frac{3}{4},1], \\
	                              f(x) (dx^2 + g_{\mathbb{S}^2}) & x \in [0,\frac{3}{4}]. \end{array} \right.
$$
Note that the Riemannian volume of $(M,g)$ is infinite in our examples.
\end{rem}

\vspace{0.2cm}
\noindent \textbf{B}. Consider here a metric $g$ of the type (\ref{A-Metric}) where
\begin{equation} \label{h2}
  \sqrt{h}(x) = h_1^{\frac{n-1}{2}}(x) = \left\{ \begin{array}{cc}
	                                             (\frac{1}{4} - x)^{r}, & x \in [0,\frac{1}{4}[, \\
																							 f(x), & x \in [\frac{1}{4}, \frac{3}{4}], \\
																							 (x-\frac{3}{4})^{r}, & x \in ]\frac{3}{4},1],
																								\end{array} \right.
\end{equation}
where $f(x)$ is any smooth positive function on $[\frac{1}{4}, \frac{3}{4}]$ and $r \geq 1$. Note that for this choice of metric, the condition $\frac{1}{\sqrt{h}} \in L^1$ is not satisfied. On the interval $(\frac{3}{4}, 1]$, after separation of variables, the ODE becomes the Bessel type equation
\begin{equation} \label{ODE2}
  u_{m}''  + \frac{r}{(x-\frac{3}{4})} u_{m}' - \mu_m  u_{m} = 0.
\end{equation}
If we set as before $X = x-\tr$, the solutions of (\ref{ODE2}) have the expression:
\begin{equation}\label{Bessel2}
u_{m} = A_{m}\  X^{\frac{1-r}{2}} \ I_{-\frac{1-r}{2}} (\sqrt{\mu_m  } X) + B_{m} \ X^{\frac{1-r}{2}}  K_{-\frac{1-r}{2}} (\sqrt{\mu_m } X),
\end{equation}
where $A_{m}, \  B_{m}$ are some complex constants and for $m=0$, the solutions of (\ref{ODE2}) are:
\begin{equation}\label{Bessel20}
u_{0} = A_{0} + B_{0} X^{1-r}, \ ({\rm{if}} \ r>1), \ \ \ u_{0} = A_{0} + B_{0} \log X ,  \ ({\rm{if}} \ r=1),
\end{equation}
for some  constants $A_{0}, \ B_{0} \in \C$.

Requiring that $u$ be a solution of finite energy, we get immediately that $B_{m} = 0$ for all $m \geq 0$. Using the boundary condition at $x=1$, we conclude as in the previous example that there is a unique solution of (\ref{DirichletPb-lambda0}) on $(\frac{3}{4}, 1]$. The situation is exactly the same between $0$ and $\frac{1}{4}$, \textit{i.e.} using the boundary condition at $x=0$, we can show that there is a unique solution of (\ref{DirichletPb-lambda0}) on $[0,\frac{1}{4}[$. Micmicking the last step of the previous proof, we conclude that there is a unique \emph{global} weak solution $u$ of (\ref{DirichletPb-lambda0}) of finite energy. Note that the solution between $\frac{1}{4}$ and $\frac{3}{4}$ must be identically $0$ if $r>1$. Finally, the same calculations as in the previous example show that the DN map only depends on the metric outside the interfaces $x=\frac{1}{4}$ and $x=\frac{3}{4}$ achieving thus a cloaking region between $\frac{1}{4}$ and $\frac{3}{4}$.

An example in dimension $3$ would be (take $n = 2$ and $r \geq 1$)
$$
  M = [0,1] \times \mathbb{S}^2,
$$
equipped with the metric ($f(x)$ is any positive smooth function)
$$
   g = \left\{ \begin{array}{cc} (x-\frac{3}{4})^{2r} (dx^2 + g_{\mathbb{S}^2}) & x \in ]\frac{3}{4},1], \\
	                              f(x) (dx^2 + g_{\mathbb{S}^2}) & x \in [\frac{1}{4},\frac{3}{4}], \\
																(\frac{1}{4}-x)^{2r} (dx^2 + g_{\mathbb{S}^2}) & x \in [0,\frac{1}{4},1[
																 \end{array} \right.
$$
Note at last that the Riemannian volume of $(M,g)$ is finite in this class of examples. \\

\vspace{0.2cm}
\noindent \textbf{C}. Consider here a metric $g$ of the type (\ref{A-ExtendedMetric}) where
\begin{equation} \label{h3}
  h_0(x) = 1, \quad h_1(x) = \left\{ \begin{array}{cc}
	                                             (\frac{1}{4} - x)^{r}, & x \in [0,\frac{1}{4}[, \\
																							 f(x), & x \in [\frac{1}{4}, \frac{3}{4}], \\
																							 (x-\frac{3}{4})^{r}, & x \in ]\frac{3}{4},1],
																								\end{array} \right.
\end{equation}
where $f(x)$ is any smooth positive function on $[\frac{1}{4}, \frac{3}{4}]$ and $\frac{2}{n} \leq r < 2$.  Note that for this choice of metric the condition $\sqrt{\frac{h_0}{h_1^{n}}} \in L^1(0,1)$ is not satisfied (see Remark \ref{A-Extension-MainThm}). A short calculation shows that the Dirichlet problem (\ref{DirichletPb-lambda0}) associated to such metric amounts to (after separation of variables) the following radial ODEs
\begin{equation} \label{ODE-C}
  u_{m}'' + \frac{h'(x)}{2h(x)} u_{m}' - \mu_m  u_{m} = 0, \quad h(x) = \frac{h_1^{n}}{h_0}.
\end{equation}
Moreover, we recall that we look for solutions $u$ of finite energy which in this setting is equivalent to
\begin{equation} \label{FE-C}
  \sum_{m=0}^\infty \int_0^1 \left[ |u'_{m}|^2 + |u_{m}|^2 + \mu_m  \frac{1}{h_1(x)} |u_{m}|^2 \right] h_1^{\frac{n}{2}} dx < \infty.
\end{equation}

On the interval $(\frac{3}{4}, 1]$, the radial ODE (\ref{ODE3}) becomes the Bessel type equation
\begin{equation} \label{ODE3}
  u_{m}''  + \frac{nr}{2(x-\frac{3}{4})} u_{m}' - \frac{\mu_m }{X^r} u_{m} = 0.
\end{equation}
If we set as before $X = x-\tr$, the solutions of (\ref{ODE3}) have the form (\cite{Leb}, Eq. (5.4.12))
\begin{equation}\label{Bessel3}
u_{m} = A_{m}\  X^{\frac{2-nr}{4}} \ I_{\frac{nr-2}{2(2-r)}} (\frac{2\sqrt{\mu_m }}{2-r} X^{\frac{2-r}{2}}) + B_{m} \ X^{\frac{2-nr}{4}} \ K_{\frac{nr-2}{2(2-r)}} (\frac{2\sqrt{\mu_m }}{2-r} X^{\frac{2-r}{2}}),
\end{equation}
where $A_{m}, \  B_{m}$ are some complex constants and for $m=0$, the solutions of (\ref{ODE2}) are:
\begin{equation}\label{Bessel30}
u_{0} = A_{0} + B_{0} X^{\frac{2-nr}{2}}, \ ({\rm{if}} \ nr>2), \ \ u_{0} = A_{0} + B_{0} \log X, \ ({\rm{if}} \ nr=2).
\end{equation}
for some  constants $A_{0}, \ B_{0} \in \C$.

Requiring that $u$ satisfies (\ref{FE-C}), we get immediately that $B_{m} = 0$ for all $m \geq 0$. Using the boundary condition at $x=1$, we conclude as in example A that there is a unique solution of (\ref{DirichletPb-lambda0}) on $(\frac{3}{4}, 1]$. The situation is similar between $0$ and $\frac{1}{4}$. Micmicking the last step of the previous proof, we conclude that there is a unique global weak solution $u$ of (\ref{DirichletPb-lambda0}) with finite energy. Note that the solution between $\frac{1}{4}$ and $\frac{3}{4}$ must be identically $0$ here. At last, the same calculations as in the previous example show that the DN map only depends on the metric outside the interfaces $x=\frac{1}{4}$ and $x=\frac{3}{4}$ achieving thus a cloaking region between $\frac{1}{4}$ and $\frac{3}{4}$. Note at last that the Riemannian volume of $(M,g)$ is finite in this class of examples.

An example in dimension $3$ would be (take $n = 2$ and $1 \leq r < 2$)
$$
  M = [0,1] \times \mathbb{S}^2,
$$
equipped with the metric ($f(x)$ is any positive smooth function)
$$
   g = \left\{ \begin{array}{cc} dx^2 + (x-\frac{3}{4})^{r} \ g_{\mathbb{S}^2} & x \in ]\frac{3}{4},1], \\
	                               dx^2 + f(x) \ g_{\mathbb{S}^2}  & x \in [\frac{1}{4},\frac{3}{4}], \\
																 dx^2 + (\frac{1}{4}-x)^{r} \ g_{\mathbb{S}^2} & x \in [0,\frac{1}{4},1[
																 \end{array} \right.
$$

\vspace{0.2cm}
\noindent \textbf{D}. Consider here a metric $g$ of the type (\ref{A-ExtendedMetric}) where
\begin{equation} \label{h4}
  h_0(x) = 1, \quad h_1(x) = \left\{ \begin{array}{cc}
	                                             (\frac{1}{4} - x)^{2}, & x \in [0,\frac{1}{4}[, \\
																							 f(x), & x \in [\frac{1}{4}, \frac{3}{4}], \\
																							 (x-\frac{3}{4})^{2}, & x \in ]\frac{3}{4},1],
																								\end{array} \right.
\end{equation}
where $f(x)$ is any smooth positive function on $[\frac{1}{4}, \frac{3}{4}]$. Note that for this choice of metric the condition $\sqrt{\frac{h_0}{h_1}} \in L^1(0,1)$ is not satisfied (see Remark \ref{A-Extension-MainThm}). Using exactly the same strategy as in the previous example, we can show that there is a unique solution of finite energy of (\ref{DirichletPb-lambda0}) on $M$. Moreover, the DN map does not distinguish the metric between $x=\frac{1}{4}$ and $x=\frac{3}{4}$.

An example in dimension $3$ would be (take $n = 2$)
$$
  M = [0,1] \times \mathbb{S}^2,
$$
equipped with the metric ($f(x)$ is any positive smooth function)
$$
   g = \left\{ \begin{array}{cc} dx^2 + (x-\frac{3}{4})^{2} g_{\mathbb{S}^2} & x \in ]\frac{3}{4},1], \\
	                               dx^2 + f(x) g_{\mathbb{S}^2} & x \in [\frac{1}{4},\frac{3}{4}], \\
																 dx^2 + (\frac{1}{4}-x)^{2} g_{\mathbb{S}^2} & x \in [0,\frac{1}{4},1[
																 \end{array} \right.
$$

\noindent \footnotesize{DEPARTEMENT DE MATHEMATIQUES. UMR CNRS 8088. UNIVERSITE DE CERGY-PONTOISE. 95302 CERGY-PONTOISE. FRANCE. \\
\emph{Email adress}: thierry.daude@u-cergy.fr \\

\noindent DEPARTMENT OF MATHEMATICS AND STATISTICS. MCGill UNIVERSITY. MONTREAL, QC, H3A 2K6, CANADA \\
\emph{Email adress}: nkamran@math.mcgill.ca \\

\noindent LABORATOIRE DE MATHEMATIQUES JEAN LERAY. UMR CNRS 6629. 2 RUE DE LA HOUSSINIERE BP 92208. F-44322 NANTES CEDEX 03 \\
\emph{Email adress}: francois.nicoleau@math.univ-nantes.fr}


\begin{thebibliography}{99}

%\bibitem{AlDeHoGa}  Alessandrini G., de Hoop M., Gaburro R., \emph{Uniqueness for the electrostatic inverse boundary value problem with piecewise constant anisotropic conductivities}, Inverse Problems $\mathbf{33}$ (2017), no. 12, 125013, 24 pp.
\bibitem{ALP} Astala K., Lassas M., Paivarinta L., \emph{Calder\'on's inverse problem for anisotropic conductivities in the plane}, Comm. Partial Differential Equations $\mathbf{30}$, (2005), 207-224.
\bibitem{ALP2}  Astala K., Lassas M., P\"aiv\"arinta L., \emph{The borderlines of invisibility and visibility in Calder\'on's inverse problem}, Anal. PDE $\mathbf{9}$ (2016), no. 1, 43-98.
\bibitem{AP} Astala K., Paivarinta L., \emph{Calder\'on's inverse conductivity problem in the plane}, Annals of Mathematics $\mathbf{163}$, (2006), 265-299.
%\bibitem{Be} Bennewitz C., \emph{A proof of the local Borg-Marchenko Theorem}, Comm. Math. Phys. $\mathbf{211}$, (2001), 131-132.
\bibitem{Boa} Boas R.P., \emph{Entire Functions}, Academic Press, (1954).
%\bibitem{Bo1} Borg G., \emph{Eine Umkehrung der Sturm-Liouvilleschen Eigenwertaufgabe, Bestimmung der Differentialgleichung durch die Eigenwerte} Acta. Math. $\mathbf{78}$, (1946), 1 - 96.
%\bibitem{Bo2}  Borg G.,  \emph{Uniqueness theorems in the spectral theory of $y'' + q y = 0$, Den $11$te Skandinaviske Matematikerkongress, Trondheim 1949, (Proceedings)}, (1952), 276 - 287.
%\bibitem{BU} Bukhgeim A.L., Uhlmann G., \emph{Recovering a potential from partial Cauchy data}, Comm. PDE $\mathbf{27}$, no. 3,4, (2002), 653-668.
\bibitem{CaRo}  Caro P., Rogers K.M., \emph{Global uniqueness for the Calder\'on problem with Lipschitz conductivities}, Forum Math. Pi $\mathbf{4}$ (2016), e2, 28 pp.
\bibitem{DKN1} Daud\'e T., Kamran N., Nicoleau F., \emph{Inverse scattering at fixed energy on asymptotically hyperbolic Liouville surfaces}, Inverse Problems, Vol. $\mathbf{31}$, Number 12, (2015).
\bibitem{DKN2} Daud\'e T., Kamran N., Nicoleau F., \emph{Non-uniqueness results in the anisotropic Calderon problem with data measured on disjoints sets}, to appear in Annales de l'Institut Fourier, (2017).
\bibitem{DKN3} Daud\'e T., Kamran N., Nicoleau F., \emph{On the hidden mechanism behind non-uniqueness for the anisotropic Calder\'on problem with data on disjoint sets}, (2017), (preprint arXiv: 1701.09056).
\bibitem{DKN4} Daud\'e T., Kamran N., Nicoleau F., \emph{A survey of non-uniqueness results for the anisotropic Calder\'on problem with disjoint data}, to appear in Nonlinear Analysis in Geometry and Applied Mathematics, ed. T. Collins and S.-T. Yau, Harvard CMSA Series in Mathematics, Volume 2, (2018).
\bibitem{DN3} Daud\'e T., Nicoleau F., \emph{Inverse scattering at fixed energy in de Sitter-Reissner-Nordstr\"om black
holes}, Annales Henri Poincar\'e $\mathbf{12}$, (2011), 1-47.
\bibitem{DN4} Daud\'e T., Nicoleau F., \emph{Direct and inverse scattering at fixed energy for massless charged Dirac fields by Kerr-Newman-de Sitter black holes}, Memoirs of Amer. Math. Soc. Vol. $\mathbf{247}$, Number 1170, (2017).
\bibitem{DGN} Daud\'e T., Gobin D., Nicoleau F., \emph{Inverse scattering at fixed energy in spherically symmetric asymptotically hyperbolic manifolds}, Inverse Problems and Imaging, Vol. $\mathbf{10}$, Number 3, (2016), 659-688.
\bibitem{DSFKSU} Dos Santos Ferreira D., Kenig C.E., Salo M., Uhlmann G., \emph{Limiting Carleman weights and anisotropic inverse problems}, Invent. Math $\mathbf{178}$ no. 1, (2009), 119-171.
\bibitem{DSFKLS} Dos Santos Ferreira D., Kurylev Y., Lassas M., Salo M., \emph{The Calderon problem in transversally anisotropic geometries}, Journal of European Mathematical Society. $\mathbf{18}$, 11, (2016), 2579-2626.
\bibitem{DSFKLLS} Dos Santos Ferreira D., Kurylev Y., Lassas M., Liimatainen T., Salo M., \emph{The linearized Calderon problem in transversally anisotropic geometries}, preprint (2017). arXiv:1712.04716
\bibitem{EGNT1} Eckhardt J., Gesztesy F., Nichols R., Teschl G., \emph{Weyl-Titchmarsh Theory for Sturm-Liouville Operators with Distributional Potentials}, Opuscula Math. $\mathbf{33}$, (2013), 467-563.
\bibitem{EGNT2} Eckhardt J., Gesztesy F., Nichols R., Teschl G., \emph{Inverse spectral theory for Surm-Liouville operators with distributional potentials}, Journal of the London Mathematical Society  $\mathbf{88}$, Issue $3$, (2013), 801-828.
\bibitem{ET} Eckhardt J., Teschl G., \emph{Sturm-Liouville operators with measure-valued coefficients}, J. Analyse Math.  $\mathbf{120}$, (2013), 151-224.
%\bibitem{ET} Eckhardt J., Teschl G., \emph{Uniqueness results for Schr\"odinger operators on the line with purely discrete spectra}, Trans. Amer. Math. Soc. $\mathbf{365}$, (2013), 3923-3942.
%\bibitem{FY} Freiling G., Yurko V., \emph{Inverse problems for differential operators with singular
%boundary conditions}, Math. Nachr. $\mathbf{278}$ no. 12-13, (2005), 1561-1578.
\bibitem{GKLU} Greenleaf A., Kurylev Y., Lassas M., Uhlmann G., \emph{Full-wave invisibility for actives devices at all frequencies}, Communications in Math. Phys, $\mathbf{275}$, Issue 3, (2007), 749-789.
\bibitem{GKLU2}  Greenleaf A., Kurylev Y., Lassas M., Uhlmann G., \emph{Invisibility and inverse problems}, Bull. Amer. Math. Soc. $\mathbf{46}$ (2009), no. 1, 55-97.
\bibitem{GLU} Greenleaf A., Lassas M., Uhlmann G., \emph{On nonuniqueness for Calderon's inverse problem}, Mathematical Research Letters, $\mathbf{10}$,
Vol. 5, (2003), 685-693.
%\bibitem{GS} Gesztesy F., Simon B., \emph{On local Borg-Marchenko uniqueness results}, Comm. Math. Phys. $\mathbf{211}$, (2000), 273-287.
\bibitem{GSB} Guillarmou C., S\`a Barreto A., \emph{Inverse problems for Einstein manifolds}, Inverse Probl. Imaging $\mathbf{3}$, (2009), 1-15.
\bibitem{GT} Gilbarg D., Trudinger N. N., \emph{Elliptic partial differential equations of second order}, Reprint of the 1998 edition. Classics in Mathematics. Springer-Verlag, Berlin, 2001. xiv+517 pp.
\bibitem{GT1} Guillarmou C., Tzou L., \emph{Calderon inverse problem with partial data on Riemann surfaces}, Duke Math. J. $\mathbf{158}$, no. 1, (2011), 83-120.
\bibitem{GT2} Guillarmou C., Tzou L., \emph{The Calderon inverse problem in two dimensions. Inverse problems and applications: Inside Out. II, 119-166}, Math. Sci. Res. Inst. Publ. $\mathbf{60}$, Cambridge University Press, Cambridge, (2013).
\bibitem{Ha} Haberman B. \emph{Uniqueness in Calder\'on's problem for conductivities with unbounded gradient}, Comm. Math. Phys. $\mathbf{340}$ (2015), no. 2, 639-659.
\bibitem{HaTa} Haberman B.,  Tataru D. \emph{Uniqueness in Calder\'on's problem with Lipschitz conductivities}, Duke Math. J. $\mathbf{162}$ (2013), no. 3, 496-516.
%\bibitem{Ho1} H\"ormander L., \emph{The Analysis of Linear Partial Differential Operators. I. Distribution Theory and Fourier Analysis}, Springer, Berlin, (1983).
%\bibitem{Is} Isakov V., \emph{On uniqueness in the inverse conductivity problem with local data}, Inverse Probl. Imaging $\mathbf{1}$, (2007), 95-105.
%\bibitem{IUY1} Imanuvilov O., Uhlmann G., Yamamoto M., \emph{The Calder\'on problem with partial data in two dimensions}, J. Amer. Math. Soc. $\mathbf{23}$, (2010), 655-691.
%\bibitem{IUY2} Imanuvilov O.Y., Uhlmann G., Yamamoto M., \emph{Inverse boundary value problem by measuring Dirichlet data and Neumann data on disjoint sets}, Inverse problems $\mathbf{27}$, no. 8, (2011), 085007, 26p.
\bibitem{JB} Joshi C. M., Bissu S. K., \emph{Some inequalities of Bessel and modified Bessel functions}, J. Austral. Math. Soc. (Series A), $\mathbf{50}$, (1991), 333-342.
%\bibitem{KY} Kang H., Yun K., \emph{Boundary determination of conductivities and Riemannian metrics via local Dirichlet-to-Neumann operators}, SIAM J. Math. Anal. $\mathbf{34}$, no. 3, (2003), 719-735.
\bibitem{KKL} Katchalov A., Kurylev Y., Lassas M., \emph{Inverse boundary spectral problems}, Monographs and Surveys in Pure and Applied Mathematics $\mathbf{123}$, Chapman Hall/ CRC, (2001).
%\bibitem{KKLM} Katchalov A., Kurylev Y., Lassas M., Mandache N., \emph{Equivalence of time-domain inverse problems and boundary spectral problem}, Inverse problems $\mathbf{20}$, (2004), 419-436.
\bibitem{KS1} Kenig C., Salo M, \emph{The Calderon problem with partial data on manifolds and applications}, Analysis \& PDE $\mathbf{6}$, no. 8, (2013), 2003-2048.
\bibitem{KS2} Kenig C., Salo M, \emph{Recent progress in the Calderon problem with partial data}, Contemp. Math. $\mathbf{615}$, (2014), 193-222.
\bibitem{KSU} Kenig C., Sj\"ostrand J., Uhlmann G., \emph{The Calderon problem with partial data}, Ann. of Maths $\mathbf{165}$, (2007), 567-591.
%\bibitem{KST} Kostenko A., Sakhnovich A., Teschl G., \emph{Weyl-Titchmarsh theory for Schr\"odinger operators with strongly singular potentials},  Int. Math. Res. Not. $\mathbf{2012}$, (2012), 1699-1747.
\bibitem{KrUh1}  Krupchyk K., Uhlmann G., \emph{The Calderón problem with partial data for conductivities with $3/2$ derivatives}, Comm. Math. Phys. $\mathbf{348}$ (2016), no. 1, 185-219.
\bibitem{KrUh2}  Krupchyk K., Uhlmann G., \emph{Inverse problems for magnetic Schrödinger operators in transversally anisotropic geometries}, preprint (2017), arXiv:1702.07974
%\bibitem{LO1} Lassas M., Oksanen L., \emph{An inverse problem for a wave equation with sources and observations on disjoint sets}, Inverse Problems $\mathbf{26}$ no. 8, (2010), 085012, 19p.
%\bibitem{LO2} Lassas M., Oksanen L., \emph{Inverse problem for the wave equation with Dirichlet data and Neumann data on disjoint sets}, Duke Math. J. $\mathbf{163}$ no.6, (2014), 1071-1103.
\bibitem{LaTU} Lassas M., Taylor G., Uhlmann G., \emph{The Dirichlet-to-Neumann map for complete Riemannian manifolds with boundary}, Comm. Anal. Geom. $\mathbf{11}$, (2003), 207-221.
\bibitem{LaU} Lassas M., Uhlmann G., \emph{On determining a Riemannian manifold from the Dirichlet-to-Neumann map}, Ann. Scient. Ec. Norm. Sup., $4^e$ s\'erie, $\mathbf{34}$, (2001), 771-787.
\bibitem{LeU} Lee J.M., Uhlmann G., \emph{Determining anisotropic real-analytic conductivities by boundary measuremements}, Comm. Pure Appli. Math. $\mathbf{42}$ no. 8, (1989), 1097-1112.
\bibitem{Leb} Lebedev N., \emph{Specials functions and their applications}, Dower Publication, Inc., New York, (1972).
\bibitem{Lev} Levin B. Y., \emph{Lectures on entire functions}, Translations of Mathematical Monograph,  $\mathbf{150}$, American Mathematical Society (1996).
%\bibitem{Li} Lionheart W. R. B., \emph{Conformal uniqueness results in anisotropic electrical impedance imaging}, Inverse Problems $\mathbf{13}$, (1997), 125-134.
%\bibitem{Ma} Marchenko V.A., \emph{Some questions of the theory of one-dimensional linear differential operators of the second order. I (Russian)}, Trudy Moskov. Mat. Obshch. $\mathbf{1}$, (1952), 327 - 420. English Translation in: Am. Math. Soc. Transl. Ser. 2, $\mathbf{101}$, (1973), 1 - 104.
%\bibitem{PT} P\"oschel J., Trubowitz E., \emph{ Inverse Spectral Theory}, Academic Press, Boston, 1987.
%\bibitem{Rak} Rakesh, \emph{Characterization of transmission data for Webster's Horn equation}, Inverse Problems $\mathbf{16}$, (2000), L9-L24.
%\bibitem{Ra} Ramm A.G., \emph{An Inverse Scattering Problem with part of the Fixed-Energy Phase shifts},
%Comm. Math. Phys. $\mathbf{207}$, (1999), no.1. 231-247.
%\bibitem{Re} Regge T., \emph{Introduction to complex orbital momenta}, Nuevo Cimento $\mathbf{XIV}$, (1959), no.5, 951-976.
\bibitem{RR} Rosemblum M., Rovnyak J.,  \emph{Topics in Hardy classes and univalent functions}, Birkhaüser, Basel (1994).
\bibitem{Ru} Rudin W., \emph{Real and Complex Analysis, Third edition}, McGraw-Hill Book Company, (1986).
\bibitem{SaVa}  Safarov Y., Vassiliev D., \emph{The asymptotic distribution of eigenvalues of partial differential operators. Translated from the Russian manuscript by the authors}, Translations of Mathematical Monographs, $\mathbf{155}$, American Mathematical Society, Providence, RI, 1997. xiv+354 pp.
\bibitem{Sa} Salo M., \emph{The Calder\'on problem on Riemannian manifolds, Inverse problems and applications: inside out. II}, Math. Sci. Res. Inst. Publ., $\mathbf{60}$, Cambridge Univ. Press, Cambridge, (2013), 167-247.
\bibitem{San} Santacesaria M., \emph{Note on Calderón's inverse problem for measurable conductivities}, preprint (2018), arXiv:1803.06931
%\bibitem{SP} Spiegel M., \emph{Theory and problems of complex variables, 2nd edition}, Schaum's outlines series, MacGraw-Hill, (2009).
%\bibitem{Te} Teschl G., \emph{Mathematical Methods in Quantum Mechanics}, Graduate Studies in Mathematics Vol. 99, AMS Providence, Rhode Island, (2009).
\bibitem{U1} Uhlmann G., \emph{Electrical impedance tomography and Calderon's problem}, Inverse Problems $\mathbf{25}$, (2009), 123011, 39p.
\bibitem{U2} Uhlmann G., \emph{Inverse problems: seeing the unseen}, Bull. Math. Sci. $\mathbf{4}$ (2014), no. 2, 209-279.
\bibitem{Wa} Walker W., \emph{Ordinary Differential Equations}, Graduate texts in Mathematics 182, Springer, (1998).
\bibitem{Ze} Zettl A., \emph{Sturm-Liouville theory}, Mathematical surveys and monographs, Volume $\mathbf{121}$, American Mathematical Society, (2005).

\end{thebibliography}
\end{document}